\documentclass[11pt]{amsart}
\usepackage[T1]{fontenc}
\usepackage[utf8]{inputenc}
\usepackage{lmodern}
\usepackage[a4paper,margin=30mm]{geometry}
\usepackage{amsmath,amssymb,amsthm,mathtools}
\usepackage{microtype}
\usepackage{enumitem}
\usepackage{cite}

\usepackage[hidelinks]{hyperref}

\newtheorem{theorem}{Theorem}[section]
\newtheorem{proposition}[theorem]{Proposition}
\newtheorem{lemma}[theorem]{Lemma}
\newtheorem{corollary}[theorem]{Corollary}
\newtheorem*{theoremA}{Theorem A}
\newtheorem*{theoremB}{Theorem B}
\newtheorem*{theoremC}{Theorem C}
\theoremstyle{remark}

\numberwithin{equation}{section}

\newcommand{\C}{\mathbb C}
\newcommand{\R}{\mathbb R}
\newcommand{\N}{\mathbb N}
\newcommand{\T}{\mathbb T}
\newcommand{\Pol}{\mathcal P}
\newcommand{\Acal}{\mathcal A}
\newcommand{\Hpol}{H^{\mathrm{pol}}}
\newcommand{\Hmult}{H^{\mathrm{mult}}}
\DeclareMathOperator{\card}{card}
\DeclareMathOperator{\len}{\ell}
\DeclareMathOperator{\supp}{supp}
\allowdisplaybreaks[1]
\title[Polynomial growth of complex Hardy--Littlewood constants]
{Polynomial Growth of Complex Polynomial Hardy--Littlewood Constants}
\author{Daniel M. Pellegrino}
\address{Departamento de Matem\'atica, Universidade Federal da Para\'iba,
Jo\~ao Pessoa, Brazil}
\email{daniel.pellegrino@academico.ufpb.br}
\author{Eduardo V. Teixeira}
\address{Department of Mathematics, Oklahoma State University,
Stillwater, OK 74078, USA}
\email{eduardo.teixeira@okstate.edu}
\date{}
\hypersetup{
 pdftitle={Polynomial Growth of Complex Polynomial Hardy--Littlewood Constants},
 pdfauthor={Daniel M. Pellegrino and Eduardo V. Teixeira}}

\begin{document}
\begin{abstract}
We prove polynomial growth bounds for the optimal constants in the complex
polynomial Hardy--Littlewood inequality whenever
$p\geq c m^2/\log m$, for every fixed $c>0$. This extends the recently
established polynomial growth of the complex polynomial Bohnenblust--Hille
constants at $p=\infty$ to finite values of $p$.
Moreover, when $p/m^2\to\infty$, the Hardy--Littlewood constants are bounded
by $(1+o(1))$ times the corresponding Bohnenblust--Hille constants. For real scalars, whenever $p_m/m\to\infty$, the optimal constants satisfy
$\Hpol_{m,p_m}(\R)=2^{m+o(m)}$.
\end{abstract}
\maketitle
\medskip
\noindent\textit{2020 Mathematics Subject Classification.}
46G25, 46B45.
\smallskip
\noindent\textit{Keywords.}
Hardy--Littlewood inequality, homogeneous polynomial, coefficient inequality, Bohnenblust--Hille inequality.
\tableofcontents

\section{Introduction}

The Hardy--Littlewood inequalities originate in the bilinear estimates of
Hardy and Littlewood \cite{HardyLittlewood1934}. Their multilinear and
polynomial extensions on $\ell_p$ spaces were developed in, among other works,
\cite{AlbuquerqueEtAl2016,DimantSevilla2016,PracianoPereira1981}. In the
polynomial setting, they control a suitable norm of the coefficients of an
$m$-homogeneous polynomial by its supremum on the unit ball of $\ell_p$, with a
constant independent of the number of variables.

Throughout, $\N=\{1,2,\ldots\}$, $\N_0=\N\cup\{0\}$, and
$1/\infty=0$. Let $\mathbb K\in\{\R,\C\}$ and $m,n\in\N$.
For $\alpha=(\alpha_1,\ldots,\alpha_n)\in\N_0^n$, write
\[
 |\alpha|:=\sum_{j=1}^n\alpha_j,\qquad
 z^\alpha:=\prod_{j=1}^nz_j^{\alpha_j},\qquad
 \mathcal M_m(n):=\{\alpha\in\N_0^n:|\alpha|=m\}.
\]
We denote by $\Pol_m(\mathbb K^n)$ the space of $m$-homogeneous
polynomials $P:\mathbb K^n\to\mathbb K$, written as
\begin{equation}\label{eq:polynomial-definition}
 P(z)=\sum_{\alpha\in\mathcal M_m(n)}a_\alpha z^\alpha.
\end{equation}
For $1\leq p\leq\infty$, $\ell_p^n(\mathbb K)$ is $\mathbb K^n$ with
norm $\|z\|_p=(\sum_{j=1}^n|z_j|^p)^{1/p}$ if $p<\infty$, and
$\|z\|_\infty=\max_{1\leq j\leq n}|z_j|$.
The vector $e_j\in\mathbb K^n$ has $j$th coordinate $1$ and all other
coordinates $0$. Set
\begin{equation}\label{eq:polynomial-norms}
 \|P\|_p:=\sup_{\|z\|_p\leq1}|P(z)|,\qquad
 |P|_r:=\left(\sum_{\alpha\in\mathcal M_m(n)}|a_\alpha|^r\right)^{1/r}
 \quad(1\leq r<\infty),
\end{equation}
and $|P|_\infty:=\max_{\alpha\in\mathcal M_m(n)}|a_\alpha|$.

For $m<p\leq\infty$, define
\begin{equation}\label{eq:HL-exponent}
 q(m,p):=
 \begin{cases}
 \dfrac{p}{p-m},&m<p<2m,\\[2mm]
 \dfrac{2mp}{mp+p-2m},&2m\leq p<\infty,\\[2mm]
 \dfrac{2m}{m+1},&p=\infty.
 \end{cases}
\end{equation}
The polynomial Hardy--Littlewood inequality asserts that
\begin{equation}\label{eq:intro-HL}
 |P|_{q(m,p)}\leq C\|P\|_p
 \qquad(P\in\Pol_m(\mathbb K^n)),
\end{equation}
with $C$ independent of $n$. The exponent is optimal; see
\cite{AlbuquerqueEtAl2016,DimantSevilla2016}.
We denote the least such constant by $\Hpol_{m,p}(\mathbb K)$.
The dependence of $\Hpol_{m,p}(\mathbb K)$ on the degree $m$, with $p$ allowed to vary with $m$, is the quantity studied here.
At $p=\infty$, \eqref{eq:intro-HL} becomes the polynomial
Bohnenblust--Hille inequality \cite{BohnenblustHille1931}. See also
\cite{DefantEtAl2019,DimantSevilla2016}.

The optimal exponent determines the dimension dependence, but not the growth of
the optimal constant with the degree.
The standard passage from multilinear forms to polynomials incurs a substantial
loss. To describe it, let $\Hmult_{m,p}$ be the least
constant in
\[
 \left(\sum_{j_1=1}^n\cdots\sum_{j_m=1}^n
 |T(e_{j_1},\ldots,e_{j_m})|^{q(m,p)}\right)^{1/q(m,p)}
 \leq\Hmult_{m,p}\sup_{\|x^{(k)}\|_p\leq1\ (1\leq k\leq m)}
                  |T(x^{(1)},\ldots,x^{(m)})|,
\]
for all $m$-linear forms $T:(\ell_p^n(\C))^m\to\C$ and all $n$.
If $\check P:(\ell_p^n(\C))^m\to\C$ is the symmetric $m$-linear form
associated with $P$, so that $P(z)=\check P(z,\ldots,z)$, the coefficient
comparison used in \cite[Proposition~2.2]{AraujoEtAl2015}, applied at
exponent $q(m,p)$, and the polarization inequality give
\begin{equation}\label{eq:polarization-old}
 \Hpol_{m,p}(\C)\leq\Hmult_{m,p}\frac{m^m}{(m!)^{1/q(m,p)}}.
\end{equation}
Together with the multilinear Hardy--Littlewood estimates in \cite{AlbuquerqueEtAl2016,PracianoPereira1981}, this yields
\begin{align}
 \Hpol_{m,p}(\C)
 &\leq2^{(m-1)/q(m,p)}\frac{m^m}{(m!)^{1/q(m,p)}},
 &&m<p<2m,\label{eq:polarization-lower-range}\\
 \Hpol_{m,p}(\C)
 &\leq2^{(m-1)/2}\frac{m^m}{(m!)^{1/q(m,p)}},
 &&2m\leq p<\infty.\label{eq:polarization-upper-range}
\end{align}
At $p=2m$, Stirling's formula gives
\[
 2^{(m-1)/2}\frac{m^m}{\sqrt{m!}}
 =\bigl(\sqrt{2em}+o(1)\bigr)^m.
\]
Thus full polarization yields only a superexponential estimate in the degree.
The problem of obtaining smaller finite-$p$ bounds was considered in
\cite[Section~5]{AraujoEtAl2015}, where improved estimates were related
to a conjectured coefficient inequality. Nontrivial lower bounds for the complex
polynomial Hardy--Littlewood constants at finite $p$ were obtained in
\cite{AraujoPellegrino2015}.

At $p=\infty$, the complex polynomial Bohnenblust--Hille constants
satisfy hypercontractive bounds \cite{DefantEtAl2011}, subexponential
bounds \cite{BayartPellegrinoSeoane2014}, and, more recently, polynomial
bounds: for some absolute $b<2.47$,
\[
 \Hpol_{m,\infty}(\C)=O(m^b)
\]
\cite[Theorem~5.9]{PellegrinoTeixeira2026}. For real scalars, the exponential
base at this endpoint is $2$: \cite{CamposEtAl2015} established the
corresponding upper limit, and \cite[Theorem~1.1]{RaposoTeixeira2023}
established the full limit.

For $m\geq2$, put
\begin{equation}\label{eq:entropy-main-parameters}
 q_m:=\frac{2m}{m+1},\qquad
 D_m:=\Hpol_{m,\infty}(\C),\qquad
 c_m:=\left(\frac{m}{m-1}\right)^{m-1},\qquad
 \Lambda_m:=c_m\sqrt m\,2^{(m-1)/2}.
\end{equation}
The subexponential Bohnenblust--Hille estimate gives $\log D_m=o(m)$.

The proofs combine multiplicity compression with endpoint coefficient estimates and coefficient-dependent rescaling. Contractive projections reduce a fixed multiplicity pattern to an anisotropic multilinear estimate; see \cite{NunezPellegrinoRaposoTeixeira2026,PellegrinoTeixeira2026}. In the range $2m\leq p<\infty$, the rescaling yields
\[
 \Hpol_{m,p}(\C)\leq D_m^{\,1-2m/p}\Lambda_m^{\,2m/p}.
\]

The estimates give
\[
\boxed{
\begin{array}{c}
 p_m-m\to0
 \quad\Longrightarrow\quad
 \Hpol_{m,p_m}(\C)\sim m,
 \\[2mm]
 p_m/m\to\infty
 \quad\Longrightarrow\quad
 \Hpol_{m,p_m}(\C)=\exp(o(m)),
 \\[2mm]
 p_m\geq c\,m^2/\log m
 \quad\Longrightarrow\quad
 \Hpol_{m,p_m}(\C)=m^{O(1)},
 \\[2mm]
 p_m/m^2\to\infty
 \quad\Longrightarrow\quad
 \Hpol_{m,p_m}(\C)
 \leq(1+o(1))\Hpol_{m,\infty}(\C).
\end{array}}
\]
\begin{theoremA}\label{thm:A}
For every $m\geq2$ the following estimates hold.
\begin{enumerate}[label=\textup{(\roman*)}]
\item For every $m<p\leq\infty$,
\begin{equation}\label{eq:complex-pattern-intro}
 m^{m/p}\leq\Hpol_{m,p}(\C)
 \leq m^{m/p}
       \left[e\left(1+\frac{4e}{\pi}\right)^{m-1}\right]^{1/q(m,p)}.
\end{equation}
\item If $m<p\leq2m$, so that $q(m,p)=p/(p-m)$, then
\begin{equation}\label{eq:complex-lower-entropy-intro}
 \Hpol_{m,p}(\C)\leq c_m\,m^{m/p}2^{(m-1)/q(m,p)}.
\end{equation}
\item If $2m\leq p\leq\infty$ and $\tau:=2m/p$, then
\begin{equation}\label{eq:complex-transfer-intro}
 \Hpol_{m,p}(\C)\leq D_m^{\,1-\tau}\Lambda_m^{\,\tau}.
\end{equation}
\end{enumerate}
Consequently,
\begin{equation}\label{eq:complex-upper-intro}
 \limsup_{m\to\infty}\sup_{m<p\leq\infty}
       (\Hpol_{m,p}(\C))^{1/m}\leq\sqrt2.
\end{equation}
Moreover, if $R_m\to\infty$, then
\begin{equation}\label{eq:complex-far-uniform-intro}
 \sup_{p\geq mR_m}
 \left|(\Hpol_{m,p}(\C))^{1/m}-1\right|\longrightarrow0.
\end{equation}
If $D_m\leq C m^b$ and $c>0$ is fixed, then
\begin{equation}\label{eq:polynomial-region-intro}
 p_m\geq c\,\frac{m^2}{\log m}
 \quad\Longrightarrow\quad
 \Hpol_{m,p_m}(\C)\leq m^{\,b+(\log2)/c+o(1)}.
\end{equation}
If $p_m/m^2\to\infty$, then
\begin{equation}\label{eq:BH-matching-intro}
 \Hpol_{m,p_m}(\C)\leq(1+o(1))D_m.
\end{equation}
\end{theoremA}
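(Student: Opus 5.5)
The plan is to prove the three estimates (i)–(iii) first. The asymptotic consequences will then follow by elementary calculus, using $\log D_m=o(m)$, $c_m\to e$, and $\Lambda_m^{1/m}\to\sqrt2$.

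\emph{Multiplicity compression.} Fix $P=\sum_\alpha a_\alpha z^\alpha\in\Pol_m(\C^n)$ with $\|P\|_p\le1$. Group the monomials by \emph{multiplicity pattern}: the multiset of nonzero entries of $\alpha$, say with $k$ distinct variables of multiplicities $\mu_1,\dots,\mu_k$. For a fixed pattern, I would use the contractive projections from \cite{NunezPellegrinoRaposoTeixeira2026,PellegrinoTeixeira2026}. Averaging over torus rotations isolates the monomials of that pattern. Averaging over the variables in disjoint blocks then turns the restricted polynomial into a $k$-linear form in $\ell_{p/\mu_i}$-type variables. This avoids full polarization down to $m$-linear forms, and in particular avoids the factor $m^m/(m!)^{1/q}$.

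\emph{Proof of (i).} Apply the anisotropic multilinear Hardy--Littlewood inequality (Praciano-Pereira type, with constants $\le (\text{const})^{k-1}$) to each pattern. Sum over patterns using the count of partitions weighted by $(4e/\pi)^{k-1}$; I expect this to produce the bracket $[e(1+4e/\pi)^{m-1}]^{1/q}$. The factor $m^{m/p}$ comes from testing on $z=(1,\dots,1)/m^{1/p}$ for the polynomial $(z_1+\dots+z_m)^m$ restricted to diagonal-type terms. More simply, for the lower bound I would use the monomial $z_1\cdots z_m$: its coefficient is $1$ and its sup over the $\ell_p$ ball is $m^{-m/p}$.

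\emph{Proof of (ii).} For $m<p\le2m$, compare coefficients through the symmetric form but only pattern-wise. The loss $\alpha!$-weighted multinomial ratios combine into $\sup_k (m/(m-1))^{m-1}=c_m$. The multilinear HL constant $2^{(m-1)/q}$ then finishes the bound.

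\emph{Proof of (iii).} This is the coefficient-dependent rescaling. Given $P$ with $\|P\|_p\le1$, consider the diagonal rescaling $z_j\mapsto |a|$-dependent weights $w_j$ with $\|w\|_{2m}$-normalization. Hölder then converts $\|\cdot\|_p$ into a mix of the $\infty$ and $2m$ endpoints. The endpoint estimates are $|P|_{q_m}\le D_m\|P\|_\infty$, and at $p=2m$ the bound $|P|_{2}\lesssim\Lambda_m\|P\|_{2m}$ (from (ii), since $q(m,2m)=2$ and $c_m\sqrt m\,2^{(m-1)/2}$ is the endpoint constant with $m^{m/2m}=\sqrt m$). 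Interpolation via this rescaling should give $D_m^{1-\tau}\Lambda_m^\tau$ with $\tau=2m/p$. I expect this step to be the main obstacle. Complex interpolation does not apply directly, because $q(m,p)$ varies nonlinearly. The approach I would try first is to choose weights $w_j$ proportional to powers of partial coefficient masses, so that Hölder's inequality with exponents $1/(1-\tau)$ and $1/\tau$ splits the $q(m,p)$-sum exactly. The identity $1/q(m,p)=(1-\tau)/q_m+\tau/2$ makes this consistent.

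\emph{Consequences.} The bound \eqref{eq:complex-upper-intro} follows from (ii) for $p\le2m$ and from (iii) for $p\ge2m$, since $D_m^{1/m}\to1$. For \eqref{eq:complex-far-uniform-intro}, use (i): both $m^{m/p}$ and the bracket raised to $1/(mq)$ tend to $1$ uniformly once $p\ge mR_m$. Here $1/q\to1/2$ would not by itself suffice, so instead use (iii): $\tau\to0$, which gives $\Lambda_m^{\tau/m}\to1$, together with $D_m^{1/m}\to1$. For \eqref{eq:polynomial-region-intro}, take $\tau\le 2\log m/(cm)$. Then $\Lambda_m^\tau=2^{(m-1)\tau/2}m^{O(\tau)}\le m^{(\log 2)/c+o(1)}$, and $D_m^{1-\tau}\le Cm^b$. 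For \eqref{eq:BH-matching-intro}, $\tau m\to0$ gives $\Lambda_m^\tau\to1$, and $D_m^{-\tau}\to1$ because $\log D_m=O(\log m)$ or even $o(m)$.
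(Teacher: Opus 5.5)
Your outline of (i) and your derivation of the asymptotic consequences from (i)--(iii) are in line with the paper, and you correctly note that $\Lambda_m$ is the bound of (ii) at $p=2m$. The genuine gap is in (iii), and the mechanism you sketch does not close it.

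\textbf{Part (iii).} Suppose you factor $|a_\alpha|=|a_\alpha t^{\alpha/p}|^{1-\tau}|a_\alpha d^{\alpha}|^{\tau}$ and apply H\"older with $1/q=(1-\tau)/q_m+\tau/2$. Exactness then forces $d_i=t_i^{-(1/(2m)-1/p)}\geq1$. Nothing in your sketch controls $\|P(d\,\cdot)\|_{2m}$ by $\|P\|_p$ for such expanding weights.

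Suppose instead you keep both rescalings controlled by ball inclusion: $\sum_i t_i\le1$ for the polydisc, and $\max_i d_i\le1$ for $\ell_{2m}$. The latter condition is forced, since for $p\ge2m$ the diagonal map sends the $\ell_{2m}$ ball into the $\ell_p$ ball iff $\max_i d_i\le1$. Then no dimension-free bound $|P|_q\le C|Q|_{q_m}^{1-\tau}|P(d\,\cdot)|_2^{\tau}$ can hold. Take $P=\sum_{j=1}^N z_j^m$. Concavity gives $|Q|_{q_m}\le N^{1/q_m-m/p}$ for every probability vector $t$, while $|P(d\,\cdot)|_2\le N^{1/2}$. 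So the product is at most $N^{1/q-(1-\tau)m/p}$, which is smaller than $|P|_q=N^{1/q}$ by the unbounded factor $N^{(1-\tau)m/p}$ when $2m<p<\infty$.

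The missing idea is to take as second endpoint a functional of the \emph{same} polydisc-rescaled $Q=P(t_1^{1/p}z_1,\ldots,t_n^{1/p}z_n)$ that is of $\ell_1$ type in the coordinates. The paper uses the derivative square function $S(Q)=\sum_i(\sum_\alpha\alpha_i^2|b_\alpha|^2)^{1/2}$. It satisfies $S(Q)\le\sqrt m\,\Lambda_m\|Q\|_\infty$, by combining the Khintchine factor $2^{(m-1)/2}$ with the Harris-type bound $\sup_{z\in\mathbb D^n}\sum_i|\partial_iQ(z)|\le mc_m\|Q\|_\infty$. That bound comes from the first Fourier coefficient of $\theta\mapsto Q(\frac1m e^{i\theta}w+\frac{m-1}{m}z)$. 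In the example above with uniform $t$, $S(Q)/\sqrt m=\sqrt m\,N^{1-m/p}$, which is exactly the size needed.

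In general, take $t_i=\frac1m\sum_\alpha\alpha_i\mu_\alpha$ with $\mu_\alpha=|a_\alpha|^q/|P|_q^q$, and set $I=mH(t)-H(\mu)$. Jensen gives $|Q|_{q_m}\ge|P|_q\,e^{-I/p}$ and $S(Q)/\sqrt m\ge|P|_q\exp\{(\frac1{2m}-\frac1p)I\}$. The outer sum over $i$ in $S$ supplies the extra $H(t)$, and the two exponents cancel exactly when $\tau=2m/p$. Incidentally, $1/q$ is affine in $1/p$, so nonlinearity of $q(m,p)$ is not the obstruction to interpolation.

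\textbf{Part (ii).} This part is also unsupported. Invoking the $m$-linear Hardy--Littlewood constant $2^{(m-1)/q}$ requires passing to an $m$-linear form, and that is where the loss $m^m/(m!)^{1/q}$ in \eqref{eq:polarization-old} comes from. You do not say how working ``pattern-wise'' avoids it. A pattern-by-pattern reduction as in (i) yields $s$-linear forms whose constants, summed over patterns, lead to the $(1+4e/\pi)^{(m-1)/q}$ of (i), not to $2^{(m-1)/q}$. The assertion that multinomial ratios combine into $c_m$ has no derivation, and the factor $m^{m/p}$ is not accounted for.

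In the paper, $c_m$ is the one-variable Harris constant above. Part (ii) uses the same rescaling $Q$ with $F_q(Q)=\sum_i(\sum_\alpha\alpha_i^q|b_\alpha|^q)^{1/q}$. Jensen gives $F_q(Q)\ge|P|_q\,m^{1-m/p}$: the $H(t)$ terms cancel because $1-1/q=m/p$, and the residual entropy is at most $\log m$. In the other direction, $\|v\|_q\le\|v\|_2^{2/q}\|v\|_\infty^{1-2/q}$, Khintchine and the derivative bound give $F_q(Q)\le mc_m2^{(m-1)/q}\|Q\|_\infty$.

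\textbf{Two smaller points in (i).} First, torus averaging alone does not isolate a multiplicity pattern contractively. The paper colours the variables at random into $s$ blocks, projects onto $\Acal_{\lambda,\chi}(n)$, and averages using the survival probability $\prod_r m_r(\lambda)!/s^s$. The reciprocal of this, summed over partitions of length $s$, is $\frac{s^s}{s!}\binom{m-1}{s-1}\le e^s\binom{m-1}{s-1}$. Second, the factor $m^{m/p}$ in the upper bound comes from the block rescaling $c_t=(\lambda_t/m)^{1/p}$, i.e.\ $G_p(\lambda)^q\le s^{mq/p}$, not from testing $(z_1+\cdots+z_m)^m$.
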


The finite-$p$ polynomial estimate has a simultaneous form across homogeneous
levels. Let $\beta_\ast<2.47$ denote the threshold for the weighted graded
Bohnenblust--Hille estimate obtained in
\cite[Proposition~5.6 and the proof of Theorem~5.9]{PellegrinoTeixeira2026}. For every
$c>0$ and every
\[
 B>\beta_\ast+\frac{\log2}{c},
\]
there is a constant $C_{B,c}$ such that, whenever $R\geq2$ and
\[
 2R\leq p<\infty,\qquad p\geq c\,\frac{R^2}{\log R},
\]
every polynomial $F=\sum_{r=0}^R F_r$ on $\C^n$, with $F_r$ $r$-homogeneous,
satisfies
\begin{equation}\label{eq:graded-HL-intro}
 \left(
 \sum_{r=1}^R
 \frac{|F_r|_{q(r,p)}^2}{r^{2B}}
 \right)^{1/2}
 \leq C_{B,c}\|F\|_p.
\end{equation}
Here $\|F\|_p:=\sup_{\|z\|_p\leq1}|F(z)|$. This is the finite-$p$
Hardy--Littlewood analogue of the weighted graded Bohnenblust--Hille estimate;
it controls all active homogeneous degrees by the norm of the full polynomial.

\begin{theoremB}\label{thm:B}
The real polynomial Hardy--Littlewood constants satisfy
\begingroup\small
\begin{equation}\label{eq:real-scale-intro}
 2\leq\liminf_{m\to\infty}\inf_{m<p\leq\infty}
 \bigl(\Hpol_{m,p}(\R)\bigr)^{1/m}
 \leq\limsup_{m\to\infty}\sup_{m<p\leq\infty}
 \bigl(\Hpol_{m,p}(\R)\bigr)^{1/m}
 \leq2\sqrt2.
\end{equation}
\endgroup
If $R_m\to\infty$, then the far Hardy--Littlewood range has the exact real
exponential base $2$, uniformly in the sense that
\begin{equation}\label{eq:real-far-uniform-intro}
 \sup_{p\geq mR_m}
 \left|\bigl(\Hpol_{m,p}(\R)\bigr)^{1/m}-2\right|\longrightarrow0.
\end{equation}
\end{theoremB}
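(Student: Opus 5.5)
The argument for Theorem~B has two parts: an upper bound obtained by transferring Theorem~A from complex to real scalars, and a lower bound obtained from extremal polynomials. For the upper bound, take $P\in\Pol_m(\R^n)$ and let $P_\C$ be its complexification, which has the same coefficients. The classical complexification estimate for homogeneous polynomials on $\ell_p$ (of Muñoz--Sarantopoulos--Tonge type) gives
\[
\|P_\C\|_{\ell_p^n(\C)}\leq 2^{m-1}\|P\|_{\ell_p^n(\R)},
\]
and the constant $2^{m-1}$ does not depend on $p$. Hence
\[
\Hpol_{m,p}(\R)\leq 2^{m-1}\Hpol_{m,p}(\C).
\]
Combining this with \eqref{eq:complex-upper-intro} gives the bound $2\sqrt2$ for the limsup in \eqref{eq:real-scale-intro}. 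Combining it with \eqref{eq:complex-far-uniform-intro} gives
\[
\sup_{p\geq mR_m}\Hpol_{m,p}(\R)^{1/m}\leq 2+o(1).
\]
One caveat: if the known complexification constant is worse than $2^{m-1}$ for $p<\infty$, I would replace it by $2^{m+o(m)}$, which is enough for every statement about $m$th roots.

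The lower bound must hold uniformly in $p$, and here I would use the known real extremals. At $p=\infty$ there are Bohnenblust--Hille lower bounds of order $2^{m-o(m)}$, realized by polynomials built from tensor-type powers of $x^2-y^2+2xy$-like forms in boundedly many variables. This is the result underlying the lower half of \cite{RaposoTeixeira2023}, namely
\[
\Hpol_{m,\infty}(\R)^{1/m}\to2.
\]
Write $Q_m$ for these polynomials, in $n_m$ variables. To get a lower bound valid for all $p$, compare norms:
\[
\|Q_m\|_p\leq \|Q_m\|_\infty,
\]
since the $\ell_p$ unit ball lies in the $\ell_\infty$ unit ball. The coefficient norm $|Q_m|_{q(m,p)}$ is at least $|Q_m|_{q_m}$ whenever $q(m,p)\leq q_m$. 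This inequality can fail only when $p$ is small, because $q(m,p)\geq q_m$ always, with equality at $p=\infty$. So I would instead use the monotonicity
\[
|Q|_{q}\geq |Q|_{q'}\,N^{1/q-1/q'}\qquad (q\geq q'),
\]
where $N$ is the number of nonzero coefficients. If $N$ is only exponential in $m$ with a base controlled uniformly, or better, if one uses the simple extremal $(x^2-y^2+2xy)^{k}$-type products in which the coefficient mass is concentrated, then the loss is $2^{o(m)}$. Alternatively I would pick a simpler witness: the polynomial $\prod_{j=1}^{m/2}(x_j^2-y_j^2)$-style products give ratio $\|\cdot\|_p$ vs.\ coefficients of order $2^{m}$ up to factors $m^{O(m/p)}$ and the $\ell_p$-ball scaling.

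The main obstacle is making the lower bound uniform over the near range $m<p<mR$, where $p$ is close to $m$. There the normalization of the $\ell_p$ ball costs factors like $m^{m/p}$, which mirrors \eqref{eq:complex-pattern-intro}. I would handle this with product polynomials in disjoint pairs of variables, $Q=\prod_{i=1}^{k}f(x_i,y_i)$. For such products $\|Q\|_p$ is computable by Lagrange/AM–GM: it equals $\|f\|_{p}$-type factors times $k^{-m/p}$. The coefficient norm factorizes exactly. So the ratio is $(\text{per-block ratio})^{k}$ times $k^{m/p}$, and the extra factor only helps. Choosing $f$ to be the degree-$d$ real extremal with per-block ratio $\geq(2-\varepsilon)^d$ in every $\ell_p$ norm, and letting $d\to\infty$ slowly, should give the liminf bound $\geq2$ uniformly in $p$. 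Together with the upper bound this yields \eqref{eq:real-far-uniform-intro}.
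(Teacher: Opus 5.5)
Your upper bound is the paper's argument: the complexification estimate $\|P_\C\|_p\le 2^{m-1}\|P\|_p$ of Mu\~noz--Sarantopoulos--Tonge gives $\Hpol_{m,p}(\R)\le 2^{m-1}\Hpol_{m,p}(\C)$. Combined with the uniform complex bounds of Theorem~A, this yields the $2\sqrt2$ upper limit and the upper half of the far-range statement. The lower bound, however, is not established, and this is a genuine gap.

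\textbf{The missing idea.} The lower bound should come from an explicit family whose \emph{largest coefficient} is large. For every $q$ one has $|Q|_q\ge|Q|_\infty$, the largest coefficient modulus, and for every $p$ one has $\|Q\|_p\le\|Q\|_\infty$. Hence $\Hpol_{m,p}(\R)\ge|Q|_\infty/\|Q\|_\infty$, and the right side does not depend on $p$. Uniformity in $p$ is then automatic, and the range near $p=m$ is no obstacle: the $\ell_p$ normalization never costs anything in a lower bound. The paper proceeds as follows.
\begin{enumerate}[label=\textup{(\arabic*)}]
\item It takes $U_1=x_1^2-x_2^2$ and $U_{k+1}=U_k(x')^2-U_k(x'')^2$ in $2^k$ variables.
\item It sets $Q_{k,\nu}=U_k^\nu$, so that $\|Q_{k,\nu}\|_\infty=1$ and, by Campos et al., the largest coefficient is at least $(2^\nu/(\nu+1))^{2^k-1}$.
\item It pads by $t^r$ to reach every degree $m$.
\item This gives $\liminf\ge 2^{1-2^{-k}}$ uniformly in $p$, and then $k\to\infty$.
\end{enumerate}

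\textbf{Why your three routes fail.}

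(a) The abstract limit $\Hpol_{m,\infty}(\R)^{1/m}\to2$ only gives $\ell_{q_m}$ information about polynomials you do not control. Your H\"older transfer loses $N^{1/q_m-1/q(m,p)}$, which tends to $N^{(m+1)/(2m)}$ as $p\downarrow m$. This loss is $2^{o(m)}$ only if $\log N=o(m)$, so ``$N$ exponential in $m$'' does not suffice. You would need explicit extremals in $o(m/\log m)$ variables with a proven $\ell_{q_m}$ bound. The $U_k^\nu$ family with $k$ fixed provides this, but then the H\"older step is redundant, since the available bound is already on the largest coefficient.

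(b) $\prod_j(x_j^2-y_j^2)$ is not a witness. Its coefficients are all $\pm1$ and its sup norm on the cube is $1$. So $|Q|_q=2^{m/(2q)}\le 2^{(m+1)/4}$ and its largest coefficient is $1$. Meanwhile $\|Q\|_p=(m/2)^{-m/p}$ gains only a polynomial factor.

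(c) In $\prod_{i=1}^k f(x_i,y_i)$ the factor $k^{m/p}\le k$ is polynomial, so the exponential base is exactly the per-block base. As $p\downarrow m$ one has $q(m,p)\to\infty$, and the per-block ratio is then essentially $|f|_\infty/\|f\|_\infty$. Assuming this ratio is at least $(2-\varepsilon)^d$ assumes the whole lower bound. In two variables the assumption is actually false. Put $g(t)=f(t,1)$; then $|g(t)|\le\max(1,|t|)^d$ on $\R$. A Phragm\'en--Lindel\"of comparison with $d$ times the Poisson extension of $\log^+|t|$, followed by Cauchy's estimate on $|z|=1$, bounds every coefficient of $f$ by $e^{2Gd/\pi}\|f\|_\infty\approx 1.79^d\|f\|_\infty$, where $G$ is Catalan's constant. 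So the pair construction cannot give base $2$ near $p=m$.
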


Thus the real finite-$p$ asymptotic in Theorem~\ref{thm:B} preserves the
exponential scale previously obtained at the Bohnenblust--Hille endpoint
$p=\infty$ \cite[Theorem~1.1]{RaposoTeixeira2023}.

\begin{theoremC}\label{thm:C}
Let $p_m>m$ for $m\geq2$.
\begin{enumerate}[label=\textup{(\roman*)}]
\item If $p_m/m\to1$, then
\[
 (\Hpol_{m,p_m}(\C))^{1/m}\longrightarrow1,\qquad
 \bigl(\Hpol_{m,p_m}(\R)\bigr)^{1/m}\longrightarrow2.
\]
Both limits are uniform for $m<p\leq m+h_m$ whenever
$h_m>0$ and $h_m/m\to0$.
\item If $p_m-m=o(\log m)$, then $\Hpol_{m,p_m}(\C)=m^{1+o(1)}$.
\item If $p_m-m\to0$, then $\Hpol_{m,p_m}(\C)/m\to1$.
\item For every fixed $h>0$, there are constants $c_h,C_h>0$ such that
\[
 c_hm\leq\Hpol_{m,m+h}(\C)\leq C_hm
\]
for all sufficiently large $m$.
\end{enumerate}
\end{theoremC}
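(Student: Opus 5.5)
Theorem~C should follow from Theorem~A(i) and (ii) in the near range $p=m+h$, together with a lower bound in the real case. Throughout write $p=m+h$ with $0<h\leq m$. Then $q(m,p)=p/(p-m)=(m+h)/h$, so $1/q(m,p)=h/(m+h)$. Also $m^{m/p}=m^{m/(m+h)}=m\cdot m^{-h/(m+h)}$.

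\textbf{Complex upper bounds.} Theorem~A(i) gives
\[
 m^{m/(m+h)}\leq\Hpol_{m,m+h}(\C)\leq m^{m/(m+h)}\,K^{(m-1)h/(m+h)}e^{h/(m+h)},
 \qquad K:=1+\tfrac{4e}{\pi}.
\]
\begin{itemize}
\item For (i), if $h\leq h_m=o(m)$, the $m$-th root of the lower bound is $m^{1/(m+h)}\to1$. The $m$-th root of the upper factor is at most $K^{h/(m+h)}e^{o(1)}\to1$. Both estimates are uniform in $h\leq h_m$.
\item For (ii), the extra factor is $m^{-h/(m+h)}K^{(m-1)h/(m+h)}e^{h/(m+h)}=\exp(O(h))$. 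If $h=o(\log m)$, this equals $m^{o(1)}$. The lower bound $m\cdot m^{-h/(m+h)}$ is also $m^{1-o(1)}$.
\item For (iii), $h\to0$ makes every correction factor tend to $1$, so $\Hpol/m\to1$.
\item For (iv), with $h$ fixed, the lower bound is $m^{1-h/(m+h)}\geq m\cdot m^{-h/m}\geq c_h m$. The upper bound is at most $mK^he^h=:C_hm$.
\end{itemize}
Theorem~A(ii) could replace the constant $K$ by $2$, but (i) already suffices.

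\textbf{Real case of (i).} An upper bound comes from complexification. Restricting a real polynomial to $\ell_p^n(\R)$ and extending it to $\C^n$ costs a factor of at most $2^{m-1}$ in sup norm. The standard Harris-type bound for $p\geq m$ is of this order; if only a weaker $2^{m+o(m)}$ bound is available, that still suffices. Hence
\[
\Hpol_{m,p}(\R)\leq 2^{m-1}\Hpol_{m,p}(\C),
\]
so the $\limsup$ of the $m$-th root is at most $2$, uniformly for $h\leq h_m$.

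For the lower bound, the liminf inequality $2\leq\liminf_m\inf_p(\Hpol_{m,p}(\R))^{1/m}$ of Theorem~B gives
\[
\liminf_{m\to\infty}\inf_{m<p\leq\infty}\bigl(\Hpol_{m,p}(\R)\bigr)^{1/m}\geq2.
\]
This holds uniformly in $p$, so in particular on $m<p\leq m+h_m$, and together with the upper bound it gives the real limit $2$.

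\textbf{Main obstacle.} The delicate step is the real upper bound. It requires a complexification constant $2^{m+o(m)}$ on $\ell_p$ that is uniform in $p\in(m,m+h_m]$. I would use the Harris/Sarantopoulos estimate $\|P_\C\|_p\leq 2^{m-1}\|P\|_p$, which is valid on $\ell_p$ for real $p\geq1$ up to subexponential factors. If that fails, I would instead rerun the multiplicity-pattern argument behind Theorem~A(i) over $\R$, where the factor $K$ is replaced by a constant of size about $2$.
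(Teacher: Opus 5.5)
Your proposal is correct and follows essentially the same route as the paper. Theorem~A(i) at $p=m+h$, where $1/q(m,m+h)=h/(m+h)$, gives $m^{m/(m+h)}\leq\Hpol_{m,m+h}(\C)\leq m^{m/(m+h)}e^{O(h)}$ uniformly in $h$, and this yields (i)--(iv). The real part of (i) comes from $\Hpol_{m,p}(\R)\leq2^{m-1}\Hpol_{m,p}(\C)$ together with the $p$-uniform real lower bound of Theorem~B. The complexification bound holds exactly for every $p$ via the Mu\~noz--Sarantopoulos--Tonge estimate the paper cites, so your fallback is unnecessary.

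One small caution about your aside that Theorem~A(ii) could replace $K$ by $2$. That works for (i), (ii) and (iv), but not for (iii): there the factor $c_m\to e$ does not disappear as $h\to0$, so relying on A(i), as you did, is the right choice.
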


\medskip
\noindent\textbf{Notation and conventions.}

For a finite set $S$, write $\card S$ for its cardinality. For
$1\leq r\leq\infty$, $\ell_r(S)$ denotes
$\C^S$ with norm
\[
 \|x\|_r=
 \begin{cases}
 (\sum_{j\in S}|x_j|^r)^{1/r},&r<\infty,\\
 \max_{j\in S}|x_j|,&r=\infty.
 \end{cases}
\]
We set $\ell_r(\varnothing)=\{0\}$. For a multilinear form
$T:X_1\times\cdots\times X_s\to\mathbb K$, its norm is
\begin{equation}\label{eq:multilinear-norm}
 \|T\|:=\sup\{|T(x^{(1)},\ldots,x^{(s)})|:
                  \|x^{(t)}\|_{X_t}\leq1\ (1\leq t\leq s)\}.
\end{equation}
Empty numerical sums and products are interpreted as $0$ and $1$,
respectively; an empty Cartesian product is a one-point set.

Let $\T:=\{z\in\C:|z|=1\}$, let $\mathcal B(\T)$ be its Borel
$\sigma$-algebra, and let $m_{\T}$ be the normalized Haar measure on
$\T$, defined by
\[
 m_{\T}(A):=\frac1{2\pi}
 \bigl|\{\theta\in[0,2\pi):e^{i\theta}\in A\}\bigr|,
 \qquad A\in\mathcal B(\T),
\]
where $|\cdot|$ denotes Lebesgue measure on $[0,2\pi)$. For $d\in\N$,
define the probability measure
\[
 \mathrm{m}_d:=m_{\T}^{\otimes d}
\]
on $\T^d$. Thus, for every bounded Borel function $f:\T^d\to\C$,
\begin{equation}\label{eq:haar-measure}
 \int_{\T^d}f\,d\mathrm{m}_d
 =\frac1{(2\pi)^d}\int_{[0,2\pi)^d}
 f(e^{i\theta_1},\ldots,e^{i\theta_d})\,
 d\theta_1\cdots d\theta_d.
\end{equation}
In particular, for $h\in\mathbb Z$,
\begin{equation}\label{eq:character-integral}
 \int_{\T}\omega^h\,d\mathrm{m}_1(\omega)
 =\frac1{2\pi}\int_0^{2\pi}e^{ih\theta}\,d\theta
 =\begin{cases}1,&h=0,\\0,&h\neq0.\end{cases}
\end{equation}

\section{Mixed coefficient estimates}\label{sec:mixed}

The Steinhaus--Khintchine inequality yields the mixed estimate below. Related mixed Littlewood inequalities appear in
\cite{DefantSevilla2009}.

\begin{lemma}\label{lem:mixed-source}
Let $s\geq2$, let $N_1,\ldots,N_s\in\N$, and set
\[
 \mathcal I:=\prod_{k=1}^s\{1,\ldots,N_k\},\qquad
 \mathcal I_{i,a}:=\{\boldsymbol j\in\mathcal I:j_i=a\}
 \quad(1\leq i\leq s,\ 1\leq a\leq N_i).
\]
For every $t\in[2,\infty)$ and every $s$-linear form
\[
 T:\ell_\infty^{N_1}\times\cdots\times\ell_\infty^{N_s}\longrightarrow\C,
\]
we have
\begin{equation}\label{eq:mixed-source}
 \sum_{a=1}^{N_i}
 \left(\sum_{\boldsymbol j\in\mathcal I_{i,a}}
       |T(e_{j_1},\ldots,e_{j_s})|^t\right)^{1/t}
 \leq\left(\frac2{\sqrt\pi}\right)^{2(s-1)/t}\|T\|,
 \qquad 1\leq i\leq s.
\end{equation}
\end{lemma}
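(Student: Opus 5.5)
We fix $i$ and prove \eqref{eq:mixed-source} first for $t=2$ and $t=\infty$, then interpolate with Hölder's inequality. For each $a$, let $T_a$ be the $(s-1)$-linear form on $\prod_{k\neq i}\ell_\infty^{N_k}$ obtained by inserting $e_a$ in the $i$th slot. Its coefficients are exactly the numbers $T(e_{j_1},\ldots,e_{j_s})$ with $\boldsymbol j\in\mathcal I_{i,a}$.

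The basic observation is duality in the $i$th variable. For fixed $x^{(k)}$ with $\|x^{(k)}\|_\infty\leq1$ ($k\neq i$), choose $x^{(i)}=\sum_a\overline{\mathrm{sgn}\,T_a(\ldots)}\,e_a$, which has sup norm at most $1$. This gives
\[
 \sum_{a=1}^{N_i}|T_a((x^{(k)})_{k\neq i})|\leq\|T\|.
\]
We take $x^{(k)}=z^{(k)}\in\T^{N_k}$ and integrate against $\bigotimes_{k\neq i}\mathrm m_{N_k}$. This yields
\[
 \sum_a\int|T_a(z)|\,d\mathrm m(z)\leq\|T\|.
\]

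For $t=\infty$, each coefficient of $T_a$ is recovered by integrating $T_a(z)$ against the conjugate monomial, using \eqref{eq:character-integral}. Hence $\max_{\boldsymbol j\in\mathcal I_{i,a}}|T(e_{\boldsymbol j})|\leq\int|T_a|\,d\mathrm m$, and summing over $a$ gives \eqref{eq:mixed-source} with constant $1$.

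For $t=2$, we use the Steinhaus--Khintchine inequality with the optimal $L^1$--$L^2$ constant $\sqrt\pi/2$. We apply it iteratively in each of the $s-1$ variables $z^{(k)}$, since $T_a(z)$ is linear in each. At every step the one-variable inequality gives a lower bound for the $L^1$ norm in the current variable by $\sqrt\pi/2$ times an $\ell_2$-sum. Minkowski's integral inequality, used in the form $\int(\sum|f_j|^2)^{1/2}\geq(\sum(\int|f_j|)^2)^{1/2}$, then lets us pass to the next variable. The outcome is
\[
 \int|T_a|\,d\mathrm m\geq(\sqrt\pi/2)^{s-1}\Bigl(\sum_{\boldsymbol j\in\mathcal I_{i,a}}|T(e_{\boldsymbol j})|^2\Bigr)^{1/2},
\]
so \eqref{eq:mixed-source} holds for $t=2$ with constant $(2/\sqrt\pi)^{s-1}$.

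For general $t\in[2,\infty)$, let $A_a$ and $B_a$ denote the $\ell_2$- and $\ell_\infty$-norms of the $a$th block. Since $|x|^t\leq|x|^2\max|x|^{t-2}$, we have $\|\cdot\|_t\leq A_a^{2/t}B_a^{1-2/t}$. Hölder's inequality with exponents $t/2$ and $t/(t-2)$ gives
\[
 \sum_a A_a^{2/t}B_a^{1-2/t}\leq\Bigl(\sum_aA_a\Bigr)^{2/t}\Bigl(\sum_aB_a\Bigr)^{1-2/t}\leq(2/\sqrt\pi)^{2(s-1)/t}\|T\|.
\]

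The only delicate point is the multivariable Steinhaus inequality in the $t=2$ case. One must check that the iteration through Minkowski's inequality loses exactly the factor $\sqrt\pi/2$ per variable, and that the optimal one-variable constant (Sawa/König) is available as a citation.
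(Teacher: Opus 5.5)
Your proof is correct and follows essentially the paper's route: iterated Steinhaus--Khintchine over the $s-1$ free variables for the $\ell_2$ block norms, coefficient extraction for the $\ell_\infty$ block norms, the interpolation $\|v\|_t\leq\|v\|_2^{2/t}\|v\|_\infty^{1-2/t}$, and $\ell_\infty$-duality in the $i$th slot. The only difference is cosmetic: the paper bounds both block norms by the same integral $\int|F_{i,a}|\,d\eta_i$ and interpolates within each block before summing over $a$, whereas you sum the two endpoint estimates first and then apply H\"older over $a$, which gives the same constant.
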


\begin{proof}
Fix $i\in\{1,\ldots,s\}$. Write
\[
 \mathcal T_i:=\prod_{\substack{k=1\\k\neq i}}^s\T^{N_k},
 \qquad
 \eta_i:=\bigotimes_{\substack{k=1\\k\neq i}}^s\mathrm{m}_{N_k}
 =\bigotimes_{\substack{k=1\\k\neq i}}^s m_{\T}^{\otimes N_k}.
\]
Then $\eta_i$ is the product probability measure on $\mathcal T_i$.
For $a\in\{1,\ldots,N_i\}$ and $z=(z^{(k)})_{k\neq i}\in\mathcal T_i$,
define $F_{i,a}:\mathcal T_i\to\C$ by
\begin{equation}\label{eq:F-definition}
 F_{i,a}(z):=T(z^{(1)},\ldots,z^{(i-1)},e_a,z^{(i+1)},\ldots,z^{(s)})
 =\sum_{\boldsymbol j\in\mathcal I_{i,a}}T(e_{j_1},\ldots,e_{j_s})
 \prod_{\substack{k=1\\k\neq i}}^s z^{(k)}_{j_k}.
\end{equation}
The required estimate follows from the classical multiple Khintchine inequality for Steinhaus variables.
In the $L^1$ case needed here, it follows by successive applications of the
one-dimensional inequality
\[
 \left(\sum_{j=1}^{N}|c_j|^2\right)^{1/2}
 \leq \frac{2}{\sqrt\pi}\int_{\T^N}
       \left|\sum_{j=1}^{N}c_j z_j\right|\,d\mathrm{m}_N(z),
 \qquad c_1,\ldots,c_N\in\C,
\]
whose optimal constant is $2/\sqrt\pi$; see \cite{Sawa1985}. Applied to the
$s-1$ independent coordinate groups, the multiple inequality gives
\begin{equation}\label{eq:F-l2}
 \left(\sum_{\boldsymbol j\in\mathcal I_{i,a}}
 |T(e_{j_1},\ldots,e_{j_s})|^2\right)^{1/2}
 \leq\left(\frac2{\sqrt\pi}\right)^{s-1}
       \int_{\mathcal T_i}|F_{i,a}(z)|\,d\eta_i(z).
\end{equation}
Fubini's theorem permits the successive integrations, and each of the
$s-1$ applications contributes one factor $2/\sqrt\pi$.
Fix $\boldsymbol j=(j_1,\ldots,j_s)\in\mathcal I_{i,a}$. Multiplying
\eqref{eq:F-definition} by
$\prod_{k\neq i}\overline{z^{(k)}_{j_k}}$ and integrating with respect
to the product measure $\eta_i$ gives
\begin{align*}
 &\int_{\mathcal T_i}F_{i,a}(z)
   \prod_{\substack{k=1\\k\neq i}}^s
   \overline{z^{(k)}_{j_k}}\,d\eta_i(z)\\
 &=\sum_{\boldsymbol \ell\in\mathcal I_{i,a}}
 T(e_{\ell_1},\ldots,e_{\ell_s})
 \int_{\mathcal T_i}
 \prod_{\substack{k=1\\k\neq i}}^s
 z^{(k)}_{\ell_k}\overline{z^{(k)}_{j_k}}\,d\eta_i(z)\\
 &=\sum_{\boldsymbol \ell\in\mathcal I_{i,a}}
 T(e_{\ell_1},\ldots,e_{\ell_s})
 \prod_{\substack{k=1\\k\neq i}}^s
 \int_{\T^{N_k}}z^{(k)}_{\ell_k}
 \overline{z^{(k)}_{j_k}}\,d\mathrm{m}_{N_k}(z^{(k)}).
\end{align*}
By \eqref{eq:character-integral}, for every $k\neq i$,
\[
 \int_{\T^{N_k}}z^{(k)}_{\ell_k}
 \overline{z^{(k)}_{j_k}}\,d\mathrm{m}_{N_k}(z^{(k)})
 =\begin{cases}1,&\ell_k=j_k,\\0,&\ell_k\neq j_k.\end{cases}
\]
Since $\ell_i=j_i=a$ for $\boldsymbol \ell,\boldsymbol j\in\mathcal I_{i,a}$,
only the term $\boldsymbol \ell=\boldsymbol j$ remains. Hence
\begin{equation}\label{eq:coefficient-extraction}
 T(e_{j_1},\ldots,e_{j_s})
 =\int_{\mathcal T_i}F_{i,a}(z)
   \prod_{\substack{k=1\\k\neq i}}^s
   \overline{z^{(k)}_{j_k}}\,d\eta_i(z).
\end{equation}
Because every factor in the product has modulus one,
\begin{equation}\label{eq:F-maximum}
 \max_{\boldsymbol j\in\mathcal I_{i,a}}
 |T(e_{j_1},\ldots,e_{j_s})|
 \leq\int_{\mathcal T_i}|F_{i,a}(z)|\,d\eta_i(z).
\end{equation}
For $t>2$, the identity $|b|^t=|b|^{t-2}|b|^2$ gives
\begin{align*}
 &\left(\sum_{\boldsymbol j\in\mathcal I_{i,a}}
 |T(e_{j_1},\ldots,e_{j_s})|^t\right)^{1/t}\\
 &\quad\leq
 \left(\max_{\boldsymbol j\in\mathcal I_{i,a}}
       |T(e_{j_1},\ldots,e_{j_s})|\right)^{1-2/t}
 \left(\sum_{\boldsymbol j\in\mathcal I_{i,a}}
       |T(e_{j_1},\ldots,e_{j_s})|^2\right)^{1/t}\\
 &\quad\leq\left(\frac2{\sqrt\pi}\right)^{2(s-1)/t}
             \int_{\mathcal T_i}|F_{i,a}(z)|\,d\eta_i(z).
\end{align*}
For $t=2$, the same conclusion is \eqref{eq:F-l2}.
For each fixed $z\in\mathcal T_i$, complex $\ell_\infty$ duality gives
\begin{align*}
 \sum_{a=1}^{N_i}|F_{i,a}(z)|
 &=\sup_{\|w\|_\infty\leq1}
          \left|\sum_{a=1}^{N_i}w_aF_{i,a}(z)\right|\\
 &=\sup_{\|w\|_\infty\leq1}
 |T(z^{(1)},\ldots,z^{(i-1)},w,z^{(i+1)},\ldots,z^{(s)})|\\
 &\leq\|T\|.
\end{align*}
Therefore
\begin{align*}
 &\sum_{a=1}^{N_i}
 \left(\sum_{\boldsymbol j\in\mathcal I_{i,a}}
       |T(e_{j_1},\ldots,e_{j_s})|^t\right)^{1/t}\\
 &\quad\leq
 \left(\frac2{\sqrt\pi}\right)^{2(s-1)/t}
 \int_{\mathcal T_i}\sum_{a=1}^{N_i}|F_{i,a}(z)|\,d\eta_i(z)\\
 &\quad\leq
 \left(\frac2{\sqrt\pi}\right)^{2(s-1)/t}
 \int_{\mathcal T_i}\|T\|\,d\eta_i(z)
 =\left(\frac2{\sqrt\pi}\right)^{2(s-1)/t}\|T\|,
\end{align*}
which is \eqref{eq:mixed-source}.
\end{proof}

The following lemma is a specialization of \cite[Lemma~2.1(1)]{AlbuquerqueEtAlUniform}.

\begin{lemma}\label{lem:specialized-transfer}
Let $s,N_1,\ldots,N_s\in\N$, let $r_1,\ldots,r_s\in(1,\infty)$, and set
\[
 \sigma:=\sum_{k=1}^s\frac1{r_k}<1,
 \qquad \kappa:=(1-\sigma)^{-1}.
\]
Let $t\geq\kappa$ and $C>0$. Assume that every $s$-linear form
\[
 T:\ell_\infty^{N_1}\times\cdots\times\ell_\infty^{N_s}\longrightarrow\C
\]
satisfies, for $1\leq i\leq s$,
\begin{equation}\label{eq:transfer-source}
 \sum_{a=1}^{N_i}
 \left(\sum_{\boldsymbol j\in\mathcal I_{i,a}}
 |T(e_{j_1},\ldots,e_{j_s})|^t\right)^{1/t}
 \leq C\|T\|.
\end{equation}
Then every $s$-linear form
\[
 B:\ell_{r_1}^{N_1}\times\cdots\times\ell_{r_s}^{N_s}\longrightarrow\C
\]
satisfies
\begin{equation}\label{eq:specialized-transfer}
 \left[\sum_{a=1}^{N_i}
 \left(\sum_{\boldsymbol j\in\mathcal I_{i,a}}
 |B(e_{j_1},\ldots,e_{j_s})|^t\right)^{\kappa/t}
 \right]^{1/\kappa}
 \leq C\|B\|,
 \qquad 1\leq i\leq s.
\end{equation}
\end{lemma}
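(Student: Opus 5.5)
Following the standard route behind \cite[Lemma~2.1(1)]{AlbuquerqueEtAlUniform}, I would transfer from $\ell_\infty$ to $\ell_{r_k}$ by a diagonal rescaling, and then dualize. Fix $i$ and an $s$-linear form $B$ on $\ell_{r_1}^{N_1}\times\cdots\times\ell_{r_s}^{N_s}$. For vectors $x^{(k)}\in\C^{N_k}$ with $\|x^{(k)}\|_{r_k}\leq1$, define the $s$-linear form on $\ell_\infty^{N_1}\times\cdots\times\ell_\infty^{N_s}$
\[
 T(y^{(1)},\ldots,y^{(s)}):=B(x^{(1)}y^{(1)},\ldots,x^{(s)}y^{(s)}),
\]
where the products are coordinatewise. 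Since $\|x^{(k)}y^{(k)}\|_{r_k}\leq\|y^{(k)}\|_\infty$, we get $\|T\|\leq\|B\|$. Its coefficients are $T(e_{j_1},\ldots,e_{j_s})=B(e_{j_1},\ldots,e_{j_s})\prod_k x^{(k)}_{j_k}$. Applying \eqref{eq:transfer-source} to $T$ therefore gives, for all such $x^{(k)}$,
\[
 \sum_{a=1}^{N_i}|x^{(i)}_a|
 \left(\sum_{\boldsymbol j\in\mathcal I_{i,a}}
 |B(e_{\boldsymbol j})|^t\prod_{k\neq i}|x^{(k)}_{j_k}|^t\right)^{1/t}
 \leq C\|B\|.
\]

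The key step is to remove the weights $x^{(k)}$ for $k\neq i$, and then the weight $x^{(i)}$, by duality and Hölder.
\begin{itemize}
\item Taking the supremum over $x^{(i)}$ in the unit ball of $\ell_{r_i}$ converts the outer sum into an $\ell_{r_i^*}$ norm over $a$.
\item For each fixed $a$, I would bound the inner quantity from below as follows. Set $u_k:=|x^{(k)}|^t$, which ranges over the unit ball of $\ell_{r_k/t}$ when $r_k\geq t$. The inner sum is then a pairing of the tensor $(|B(e_{\boldsymbol j})|^t)_{\boldsymbol j}$ against $\bigotimes_{k\neq i}u_k$. Optimizing over $u_k$ is awkward, because it would produce mixed norms that depend on $k$, whereas we want a plain $\ell_t$ norm.
\end{itemize}

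For this reason I would avoid the exact optimization and instead choose specific $x^{(k)}$, exploiting that the claimed conclusion carries no dimension factor. The natural approach is the standard one: use the Hölder-type interpolation inequality for mixed norms, or equivalently a Minkowski-plus-Hölder iteration. One bounds the target $\ell_\kappa(\ell_t)$ norm by writing
\[
 |B(e_{\boldsymbol j})|=|B(e_{\boldsymbol j})|\prod_k|x^{(k)}_{j_k}|\cdot\prod_k|x^{(k)}_{j_k}|^{-1},
\]
choosing $x^{(k)}$ adapted to $B$, namely proportional to suitable powers of the partial norms of $B$. The exponents are balanced so that $1/\kappa=1-\sum_k1/r_k$ exactly matches the loss.

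The main obstacle is this choice of extremal vectors and the verification that the exponents close up using $t\geq\kappa$. Since the statement is explicitly a specialization of \cite[Lemma~2.1(1)]{AlbuquerqueEtAlUniform}, I would ultimately check that our hypotheses match theirs: $\ell_\infty$ source estimates in the mixed form \eqref{eq:transfer-source}, exponents $r_k$ with $\sigma<1$, and $t\geq\kappa$. I would then simply invoke that lemma with their parameters specialized to a single outer index $i$ and equal inner exponent $t$. The conclusion \eqref{eq:specialized-transfer} is then their conclusion verbatim, with the constant $C$ unchanged.
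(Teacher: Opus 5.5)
\paragraph{The citation step.} In the end your argument is the paper's argument, an appeal to \cite[Lemma~2.1(1)]{AlbuquerqueEtAlUniform}. However, the part the paper actually has to carry out is missing, and your claim that \eqref{eq:specialized-transfer} is ``their conclusion verbatim'' is not accurate. The cited lemma is formulated for a common finite dimension, while here $N_1,\ldots,N_s$ are arbitrary. The paper handles this by padding:
\begin{itemize}
\item put $N:=\max_kN_k$;
\item use the coordinate projections $\pi_k:\ell_{r_k}^N\to\ell_{r_k}^{N_k}$ and zero extensions $\iota_k:\ell_{r_k}^{N_k}\to\ell_{r_k}^N$, both of norm one, with $\pi_k\iota_k$ the identity;
\item set $\widetilde B:=B(\pi_1\cdot,\ldots,\pi_s\cdot)$.
\end{itemize}
Then $\|\widetilde B\|=\|B\|$, and the coefficients of $\widetilde B$ coincide with those of $B$ on $\prod_k\{1,\ldots,N_k\}$ and vanish elsewhere, so the two mixed sums agree. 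You must also name the specialization: $p_k=r_k$, $q_k=\infty$, $\lambda_0=1$. For these values their exponent is $\eta_1=(1/\lambda_0-\sum_k1/p_k+\sum_k1/q_k)^{-1}=(1-\sigma)^{-1}=\kappa$, so $t\geq\kappa$ is exactly their condition $t\geq\eta_1$. (Strictly, applying the cited lemma in dimension $N$ uses \eqref{eq:transfer-source} for forms on $(\ell_\infty^N)^s$. In the paper's application this is available because Lemma~\ref{lem:mixed-source} holds in every dimension.)

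\paragraph{The direct route.} The direct route you sketch is not a proof, and as described it cannot become one. For the outer variable your step is fine: the supremum over $x^{(i)}$ yields an $\ell_{r_i/(r_i-1)}$ norm over $a$. The trouble is the inner variables. Applying \eqref{eq:transfer-source} only to the rescalings $T=B(x^{(1)}\cdot,\ldots,x^{(s)}\cdot)$, bounding $\|T\|\leq\|B\|$, and then using $B$-adapted product weights with H\"older cannot recover the unweighted inner $\ell_t$ sums.

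Take $s=2$, $i=1$, $N_1=1$, $N_2=M\geq2$ and $B(x,y)=x_1\sum_{b=1}^My_b$, so that $\|B\|=M^{1-1/r_2}$. Your weighted inequalities read $|x_1|\,\|y\|_t\leq C\|B\|$, and their left sides never exceed $\max\{1,M^{1/t-1/r_2}\}$. The left side of \eqref{eq:specialized-transfer}, however, is $M^{1/t}$. Any H\"older factor bridging the two is therefore at least $\min\{M^{1/t},M^{1/r_2}\}$, which is unbounded in $M$.

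The conclusion does hold in this example, but for a reason the weighted family does not see. Since $1/t\leq1/\kappa<1-1/r_2$, we get $M^{1/t}\leq\|B\|$. Moreover $C\geq1$, by testing the hypothesis on the form whose only nonzero coefficient is $T(e_1,e_1)=1$. So the passage from $\ell_\infty$ to $\ell_{r_k}$ in the inner variables is genuinely the content of the cited lemma, where $t\geq\kappa$ is used substantively. This is why both you and the paper rely on the citation rather than on a weighted H\"older argument.
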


\begin{proof}
The result \cite[Lemma~2.1(1)]{AlbuquerqueEtAlUniform} is formulated
for a common finite dimension.  Set
\[
 N:=\max_{1\leq k\leq s}N_k
\]
and define
\[
 \begin{aligned}
 \iota_k:\ell_{r_k}^{N_k}&\longrightarrow\ell_{r_k}^{N},&
 \iota_k(x)&:=(x_1,\ldots,x_{N_k},0,\ldots,0),\\
 \pi_k:\ell_{r_k}^{N}&\longrightarrow\ell_{r_k}^{N_k},&
 \pi_k(x)&:=(x_1,\ldots,x_{N_k}).
 \end{aligned}
\]
Then $\|\iota_k\|=\|\pi_k\|=1$ and $\pi_k\iota_k$ is the
identity.  Define
\[
 \widetilde B:\ell_{r_1}^{N}\times\cdots\times\ell_{r_s}^{N}\to\C,
 \qquad
 \widetilde B(x^{(1)},\ldots,x^{(s)})
 :=B(\pi_1x^{(1)},\ldots,\pi_sx^{(s)}).
\]
The two inequalities
\[
 \|\widetilde B\|\leq\|B\|,
 \qquad
 |B(x^{(1)},\ldots,x^{(s)})|
 =|\widetilde B(\iota_1x^{(1)},\ldots,\iota_sx^{(s)})|
 \leq\|\widetilde B\|
\]
for $\|x^{(k)}\|_{r_k}\leq1$ show that
$\|\widetilde B\|=\|B\|$.  Moreover,
\begin{equation}\label{eq:extended-B-coefficients}
 \widetilde B(e_{j_1},\ldots,e_{j_s})=
 \begin{cases}
 B(e_{j_1},\ldots,e_{j_s}),&1\leq j_k\leq N_k\ (1\leq k\leq s),\\
 0,&j_k>N_k\text{ for some }k.
 \end{cases}
\end{equation}
Thus it is enough to prove the assertion in the common dimension $N$.

In the notation of \cite[Lemma~2.1(1)]{AlbuquerqueEtAlUniform}, take

\[
 p_k=r_k,\qquad q_k=\infty\quad(1\leq k\leq s),
 \qquad \lambda_0=1.
\]
The exponent denoted by $\eta_1$ there is
\[
 \eta_1=
 \left(\frac1{\lambda_0}-\sum_{k=1}^s\frac1{p_k}
                         +\sum_{k=1}^s\frac1{q_k}\right)^{-1}
 =(1-\sigma)^{-1}=\kappa.
\]
Thus the hypothesis $t\geq\kappa$ in the present lemma is precisely the
condition $t\geq\eta_1$ required in \cite[Lemma~2.1(1)]{AlbuquerqueEtAlUniform}.
With the parameters in \eqref{eq:transfer-source}, \cite[Lemma~2.1(1)]{AlbuquerqueEtAlUniform} gives \eqref{eq:specialized-transfer} with the same constant $C$.
By \eqref{eq:extended-B-coefficients}, the estimate \eqref{eq:specialized-transfer} reduces to the indices $1\leq j_k\leq N_k$ for $1\leq k\leq s$.
\end{proof}

\begin{lemma}\label{lem:anisotropic}
Let $s,N_1,\ldots,N_s\in\N$, let $r_1,\ldots,r_s\in(1,\infty]$,
and suppose that
\[
 \sigma:=\sum_{k=1}^s\frac1{r_k}<1.
\]
Define
\begin{equation}\label{eq:rho-D}
 \begin{split}
 \rho(s,\sigma)&:=
 \begin{cases}
 \dfrac{2s}{s+1-2\sigma},&0\leq\sigma\leq\dfrac12,\\[2mm]
 \dfrac1{1-\sigma},&\dfrac12\leq\sigma<1,
 \end{cases}\\[1mm]
 D_\C(s,\sigma)&:=
 \begin{cases}
 (2/\sqrt\pi)^{s-1},&0\leq\sigma\leq\dfrac12,\\[1mm]
 (2/\sqrt\pi)^{2(s-1)(1-\sigma)},&\dfrac12\leq\sigma<1.
 \end{cases}
 \end{split}
\end{equation}
Every $s$-linear form
$B:\ell_{r_1}^{N_1}\times\cdots\times\ell_{r_s}^{N_s}\to\C$ satisfies
\begin{equation}\label{eq:anisotropic}
 \left(\sum_{j_1=1}^{N_1}\cdots\sum_{j_s=1}^{N_s}
 |B(e_{j_1},\ldots,e_{j_s})|^{\rho(s,\sigma)}\right)^{1/\rho(s,\sigma)}
 \leq D_\C(s,\sigma)\|B\|.
\end{equation}
\end{lemma}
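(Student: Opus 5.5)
Two regimes, $\tfrac12\leq\sigma<1$ and $0\leq\sigma\leq\tfrac12$, are treated separately, and both reduce to the mixed estimate of Lemma~\ref{lem:mixed-source}. Before splitting, note that coordinates with $r_k=\infty$ need no care in Lemma~\ref{lem:specialized-transfer}: the underlying result \cite[Lemma~2.1(1)]{AlbuquerqueEtAlUniform} allows $p_k=\infty$, and if one prefers to stay inside the stated range one can approximate $r_k=\infty$ by $r_k\to\infty$. The limit passage is harmless because $\rho(s,\sigma)$ and $D_\C(s,\sigma)$ depend continuously on $\sigma$, the dimensions are finite, and $\|B\|_{\ell_{r}^{N}}\to\|B\|_{\ell_\infty^N}$. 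So I treat all $r_k\in(1,\infty]$ uniformly.

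\emph{Case $\tfrac12\leq\sigma<1$.} Here $\kappa=(1-\sigma)^{-1}\geq 2$. I apply Lemma~\ref{lem:mixed-source} with $t:=\kappa$, which is admissible since $t\in[2,\infty)$. It gives hypothesis \eqref{eq:transfer-source} with $C=(2/\sqrt\pi)^{2(s-1)/\kappa}=(2/\sqrt\pi)^{2(s-1)(1-\sigma)}=D_\C(s,\sigma)$. Lemma~\ref{lem:specialized-transfer} with $t=\kappa$ then yields, for any $i$,
\[
\Bigl[\sum_{a}\sum_{\boldsymbol j\in\mathcal I_{i,a}}|B(e_{j_1},\ldots,e_{j_s})|^{\kappa}\Bigr]^{1/\kappa}\leq D_\C(s,\sigma)\|B\|.
\]
The double sum is the full sum over $\mathcal I$, so this is \eqref{eq:anisotropic} with $\rho=\kappa$.

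\emph{Case $0\leq\sigma\leq\tfrac12$.} Now $\kappa\leq 2$, and I take $t:=2$ in both lemmas. Lemma~\ref{lem:mixed-source} gives \eqref{eq:transfer-source} with $C=(2/\sqrt\pi)^{s-1}$. Lemma~\ref{lem:specialized-transfer} then gives, for each $i$, the mixed bound
\[
\Bigl\|\bigl(\|B_{i,a}\|_{\ell_2}\bigr)_a\Bigr\|_{\ell_\kappa}\leq (2/\sqrt\pi)^{s-1}\|B\|,
\]
where $B_{i,a}$ denotes the coefficient slice with $j_i=a$. That is, we control the $\ell_\kappa$ norm in the variable $j_i$ and the $\ell_2$ norm in all the others. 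To pass to the full $\ell_\rho$ norm I use the Blei/Bayart--Pellegrino--Seoane mixed-norm interpolation: by H\"older's inequality for mixed norms,
\[
\|c\|_{\ell_\rho(\mathcal I)}\leq\prod_{i=1}^s\|c\|_{\ell_\kappa^{(i)}(\ell_2)}^{1/s},
\]
provided that for each coordinate the reciprocal exponents average correctly. In coordinate $k$ the averaged exponent is $\frac1s\bigl(\frac1\kappa+\frac{s-1}2\bigr)$, which must equal $\frac1\rho$ for every $k$. Indeed
\[
\frac1s\Bigl(1-\sigma+\frac{s-1}{2}\Bigr)=\frac{s+1-2\sigma}{2s}=\frac1{\rho(s,\sigma)}.
\]
Multiplying the $s$ bounds gives \eqref{eq:anisotropic} with constant $(2/\sqrt\pi)^{s-1}$.

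\emph{Main obstacle.} The delicate step is justifying the mixed-norm H\"older (Minkowski) interpolation when the inner exponents are in a different order: in the $i$-th estimate the $\ell_\kappa$ norm is outermost, in index $j_i$. The standard tool is Blei's inequality, or \cite[Proposition~2.1]{BayartPellegrinoSeoane2014}. It requires that the exponents can be reordered with $\ell_\kappa$ outside; Minkowski's inequality permits moving a smaller exponent outward only in a favourable direction. Here $\kappa\leq2$, so $\ell_\kappa(\ell_2)$ in either nesting order is controlled correctly, and I would verify this using the fact that the inner exponents are all $2\geq\kappa$. A minor additional point is the boundary $\sigma=\tfrac12$: there both formulas agree ($\rho=2$ and equal constants), so the two cases are consistent.
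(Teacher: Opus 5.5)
Your proposal is correct and follows the paper's proof essentially step for step. You take $t=\kappa$ in Lemma~\ref{lem:mixed-source} and Lemma~\ref{lem:specialized-transfer} when $\sigma\geq\frac12$; when $\sigma\leq\frac12$ you take $t=2$, then use a Minkowski reordering and the mixed H\"older (Blei-type) interpolation with the same exponent identity; and you handle infinite exponents by approximating with $r_k=\nu\to\infty$, using continuity of $\rho(s,\cdot)$ and $D_\C(s,\cdot)$ across $\sigma=\frac12$. The step you flag is resolved exactly as in the paper: since $2/\kappa\geq1$, the triangle inequality in $\ell_{2/\kappa}$ over the prefix indices $(j_1,\ldots,j_{i-1})$ shows that the nested norm with $\kappa$ in position $i$ is at most the controlled quantity with $\kappa$ outermost; only this direction holds, not ``either nesting order''. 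Also, Lemma~\ref{lem:mixed-source} is stated only for $s\geq2$, so the case $s=1$ should be done directly by $\ell_{r_1}$--$\ell_{r_1'}$ duality, as the paper does.
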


\begin{proof}
If $s=1$, let $r_1':=r_1/(r_1-1)$ for $r_1<\infty$, and let
$r_1':=1$ for $r_1=\infty$. Both branches in \eqref{eq:rho-D} give
$\rho(1,\sigma)=r_1'$ and $D_\C(1,\sigma)=1$. Writing
$B(x)=\sum_{j=1}^{N_1}B(e_j)x_j$, the isometric duality
$(\ell_{r_1}^{N_1})^*=\ell_{r_1'}^{N_1}$ gives
\[
 \|B\|=\sup_{\|x\|_{r_1}\leq1}
       \left|\sum_{j=1}^{N_1}B(e_j)x_j\right|
       =\left(\sum_{j=1}^{N_1}|B(e_j)|^{r_1'}\right)^{1/r_1'}.
\]

Suppose $s\geq2$ and first let $r_k<\infty$ for
$1\leq k\leq s$, and set $\kappa:=(1-\sigma)^{-1}$.  Fix
$t\geq\max\{2,\kappa\}$. Lemma~\ref{lem:mixed-source}, applied to arbitrary
$s$-linear forms on
$\ell_\infty^{N_1}\times\cdots\times\ell_\infty^{N_s}$, shows that the
hypothesis of Lemma~\ref{lem:specialized-transfer} holds with
\[
 C=\left(\frac2{\sqrt\pi}\right)^{2(s-1)/t}.
\]
Applying Lemma~\ref{lem:specialized-transfer} to $B$, for every $1\leq i\leq s$, gives
\begin{equation}\label{eq:target-mixed}
 \left[\sum_{a=1}^{N_i}
 \left(\sum_{\boldsymbol j\in\mathcal I_{i,a}}
       |B(e_{j_1},\ldots,e_{j_s})|^t\right)^{\kappa/t}
 \right]^{1/\kappa}
 \leq\left(\frac2{\sqrt\pi}\right)^{2(s-1)/t}\|B\|.
\end{equation}

\noindent$\boldsymbol{\bullet}$\ \textbf{Case $0\leq\sigma\leq1/2$.} Then $1\leq\kappa\leq2$, so we
may take $t=2$. Put
$b_{j_1,\ldots,j_s}:=|B(e_{j_1},\ldots,e_{j_s})|$ for
$(j_1,\ldots,j_s)\in\mathcal I$, and denote the left side of
\eqref{eq:target-mixed}, with $t=2$, by $M_i$.
For $s=3$ and $i=2$, for example, the exponent vector is
$\boldsymbol u^{(2)}=(2,\kappa,2)$, and the corresponding mixed norm is
\[
 \|b\|_{(2,\kappa,2)}=
 \left[\sum_{j_1=1}^{N_1}
 \left(\sum_{j_2=1}^{N_2}
       \left(\sum_{j_3=1}^{N_3}b_{j_1,j_2,j_3}^{2}\right)^{\kappa/2}
 \right)^{2/\kappa}\right]^{1/2}.
\]
For $\boldsymbol u=(u_1,\ldots,u_s)\in[1,\infty)^s$, define the
nested norm by
\begin{equation}\label{eq:nested-norm}
 \|b\|_{\boldsymbol u}:=
 \left[\sum_{j_1=1}^{N_1}
 \left(\cdots\left(\sum_{j_s=1}^{N_s}b_{j_1,\ldots,j_s}^{u_s}
              \right)^{u_{s-1}/u_s}\cdots\right)^{u_1/u_2}
 \right]^{1/u_1}.
\end{equation}
For $1\leq i,k\leq s$, let
\[
 u_k^{(i)}:=\begin{cases}\kappa,&k=i,\\2,&k\neq i.\end{cases}
\]
We verify that $\|b\|_{\boldsymbol u^{(i)}}\leq M_i$.
For $\boldsymbol a=(j_1,\ldots,j_{i-1})$ and $v\in\{1,\ldots,N_i\}$,
put
\[
 d_{\boldsymbol a,v}:=
 \left(\sum_{j_{i+1}=1}^{N_{i+1}}\cdots\sum_{j_s=1}^{N_s}
 b_{j_1,\ldots,j_{i-1},v,j_{i+1},\ldots,j_s}^2\right)^{1/2}.
\]
For $i=s$ this means $d_{\boldsymbol a,v}=b_{j_1,\ldots,j_{s-1},v}$;
for $i=1$ the prefix index $\boldsymbol a$ has one possible value.
Writing $\mathcal J_i:=\prod_{k=1}^{i-1}\{1,\ldots,N_k\}$, we have $2/\kappa\geq1$. Set $r:=2/\kappa$. Since $r\geq1$, the triangle inequality in $\ell_r(\mathcal J_i)$ (equivalently, the standard mixed-norm Minkowski interchange) applied to the vectors $(d_{\boldsymbol a,v}^{\kappa})_{\boldsymbol a\in\mathcal J_i}$ gives
\begin{align*}
 \|b\|_{\boldsymbol u^{(i)}}^{\kappa}
 &=\left[\sum_{\boldsymbol a\in\mathcal J_i}
       \left(\sum_{v=1}^{N_i}d_{\boldsymbol a,v}^{\kappa}
       \right)^{2/\kappa}\right]^{\kappa/2}\\
 &\leq\sum_{v=1}^{N_i}
       \left(\sum_{\boldsymbol a\in\mathcal J_i}
                       d_{\boldsymbol a,v}^2\right)^{\kappa/2}
 =M_i^{\kappa}.
\end{align*}
Set $\rho:=\rho(s,\sigma)$. For each $k$,
\begin{equation}\label{eq:rho-interpolation}
 \frac1\rho=\frac1s\sum_{i=1}^s\frac1{u_k^{(i)}}
 =\frac1s\left(\frac1\kappa+\frac{s-1}{2}\right)
 =\frac{s+1-2\sigma}{2s}.
\end{equation}
For each $1\leq k\leq s$,
\eqref{eq:rho-interpolation} is equivalent to
\begin{equation}\label{eq:holder-reciprocals}
 \sum_{i=1}^s\frac{\rho}{s u_k^{(i)}}=1.
\end{equation}
For fixed $j_1,\ldots,j_{s-1}$, H\"older's inequality in the $j_s$--sum,
with exponents $s u_s^{(i)}/\rho$, therefore gives
\begin{align*}
 &\left(\sum_{j_s=1}^{N_s}
       \prod_{i=1}^s b_{j_1,\ldots,j_s}^{\rho/s}\right)^{1/\rho}\\
 &\qquad\leq
 \prod_{i=1}^s
 \left(\sum_{j_s=1}^{N_s}
       b_{j_1,\ldots,j_s}^{u_s^{(i)}}\right)^{1/(s u_s^{(i)})}.
\end{align*}
Applying H\"older successively in the $j_{s-1},\ldots,j_1$ sums, using
\eqref{eq:holder-reciprocals} at each coordinate, gives
the successive mixed H\"older estimate below. Since, pointwise,
$ b=\prod_{i=1}^s b^{1/s}$, and the definition of the nested norm gives
$\|b^{1/s}\|_{s\boldsymbol u^{(i)}}=\|b\|_{\boldsymbol u^{(i)}}^{1/s}$,
\begin{align}
 \left(\sum_{j_1=1}^{N_1}\cdots\sum_{j_s=1}^{N_s}
       b_{j_1,\ldots,j_s}^{\rho}\right)^{1/\rho}
 &=\left\|\prod_{i=1}^s b^{1/s}\right\|_{(\rho,\ldots,\rho)}\notag\\
 &\leq\prod_{i=1}^s\|b^{1/s}\|_{s\boldsymbol u^{(i)}}\notag\\
 &=\prod_{i=1}^s\|b\|_{\boldsymbol u^{(i)}}^{1/s}.
 \label{eq:mixed-holder}
\end{align}
At each coordinate the H\"older exponents are $s u_k^{(i)}/\rho$, and their reciprocals sum to $1$ by \eqref{eq:holder-reciprocals}. Thus \eqref{eq:mixed-holder} follows by successive applications of H\"older's inequality; compare \cite[Remark~2.2]{BayartPellegrinoSeoane2014}.
Combining \eqref{eq:target-mixed} and \eqref{eq:mixed-holder} gives
\[
 \left(\sum_{\boldsymbol j\in\mathcal I}b_{\boldsymbol j}^{\rho}
       \right)^{1/\rho}
 \leq\prod_{i=1}^s M_i^{1/s}
 \leq\left(\frac2{\sqrt\pi}\right)^{s-1}\|B\|.
\]

\medskip
\noindent$\boldsymbol{\bullet}$\ \textbf{Case $1/2\leq\sigma<1$.} Then $\kappa\geq2$. Taking $t=\kappa$ in
\eqref{eq:target-mixed} gives
\[
 \left(\sum_{j_1=1}^{N_1}\cdots\sum_{j_s=1}^{N_s}
 |B(e_{j_1},\ldots,e_{j_s})|^{\kappa}\right)^{1/\kappa}
 \leq\left(\frac2{\sqrt\pi}\right)^{2(s-1)/\kappa}\|B\|.
\]
Now $\rho(s,\sigma)=\kappa$ and $\kappa^{-1}=1-\sigma$, which gives
\eqref{eq:anisotropic} in this range.

Finally, let $I_\infty:=\{k\in\{1,\ldots,s\}:r_k=\infty\}$ and
$d:=\card I_\infty$.
For each integer $\nu\geq2$, set
\[
 r_k^{(\nu)}:=\begin{cases}r_k,&k\notin I_\infty,\\\nu,&k\in I_\infty,
 \end{cases}
 \qquad
 \sigma_\nu:=\sum_{k=1}^s\frac1{r_k^{(\nu)}}=\sigma+\frac d\nu.
\]
Take $\nu>d/(1-\sigma)$. Let
\[
 B_\nu:\ell_{r_1^{(\nu)}}^{N_1}\times\cdots\times
          \ell_{r_s^{(\nu)}}^{N_s}\to\C
\]
be defined by
\[
 B_\nu(x^{(1)},\ldots,x^{(s)}):=B(x^{(1)},\ldots,x^{(s)}).
\]
For $k\in I_\infty$,
$\|x\|_\infty\leq\|x\|_\nu$, so $\|B_\nu\|\leq\|B\|$.
Applying Lemma~\ref{lem:anisotropic} in the finite-exponent case to
$B_\nu$, with exponents $r_1^{(\nu)},\ldots,r_s^{(\nu)}$ and parameter
$\sigma_\nu$, gives
\[
 \left(\sum_{j_1=1}^{N_1}\cdots\sum_{j_s=1}^{N_s}
 |B(e_{j_1},\ldots,e_{j_s})|^{\rho(s,\sigma_\nu)}
 \right)^{1/\rho(s,\sigma_\nu)}
 \leq D_\C(s,\sigma_\nu)\|B_\nu\|
 \leq D_\C(s,\sigma_\nu)\|B\|.
\]
Moreover,
\[
 \sigma_\nu=\sigma+\frac d\nu\longrightarrow\sigma.
\]
The sequence $\sigma_\nu$ may cross $1/2$. The two branches defining
$\rho(s,\cdot)$ and $D_\C(s,\cdot)$ in \eqref{eq:rho-D} agree at $1/2$:
\[
 \rho(s,1/2)=2,
 \qquad
 D_\C(s,1/2)=\left(\frac2{\sqrt\pi}\right)^{s-1}.
\]
Hence both functions are continuous at $1/2$, and therefore
\[
 \rho(s,\sigma_\nu)\longrightarrow\rho(s,\sigma),
 \qquad
 D_\C(s,\sigma_\nu)\longrightarrow D_\C(s,\sigma).
\]
For $r>0$, define
\[
 F:(0,\infty)\longrightarrow[0,\infty),\qquad
 F(r):=
 \left(\sum_{j_1=1}^{N_1}\cdots\sum_{j_s=1}^{N_s}
 |B(e_{j_1},\ldots,e_{j_s})|^r\right)^{1/r}.
\]
Since the sum contains exactly $N_1\cdots N_s$ terms, $F$ is continuous on
$(0,\infty)$. Consequently,
\[
 F\bigl(\rho(s,\sigma_\nu)\bigr)
 \longrightarrow F\bigl(\rho(s,\sigma)\bigr).
\]
Passing to the limit in the inequality for $F(\rho(s,\sigma_\nu))$ gives
\[
 \left(\sum_{j_1=1}^{N_1}\cdots\sum_{j_s=1}^{N_s}
 |B(e_{j_1},\ldots,e_{j_s})|^{\rho(s,\sigma)}
 \right)^{1/\rho(s,\sigma)}
 \leq D_\C(s,\sigma)\|B\|,
\]
which is \eqref{eq:anisotropic}.
\end{proof}

\section{Estimates for a fixed multiplicity pattern}\label{sec:decomposition}

\subsection{Multiplicity patterns and coefficient projections}

A partition of $m$ is a finite sequence
\[
 \lambda=(\lambda_1,\ldots,\lambda_s)
\]
of positive integers satisfying
\begin{equation}\label{eq:partition-definition}
 \lambda_1\geq\cdots\geq\lambda_s\geq1,
 \qquad
 \sum_{t=1}^s\lambda_t=m.
\end{equation}
We write $\ell(\lambda)=s$. Thus $m$ is the integer being partitioned,
whereas $s$ is the number of terms of the sequence $\lambda$. The integer
$n$ does not enter the definition of $\lambda$; it is the number of
coordinates of the multi-indices in $\mathcal M_m(n)$.

For $1\leq r\leq m$, define
\[
 m_r(\lambda):=\operatorname{card}
 \{t\in\{1,\ldots,s\}:\lambda_t=r\}.
\]
Thus $m_r(\lambda)$ is the number of times that the integer $r$ occurs among
$\lambda_1,\ldots,\lambda_s$.

For a partition $\lambda=(\lambda_1,\ldots,\lambda_s)$ of $m$ and
$n\in\mathbb N$, define
\begin{equation}\label{eq:pattern-set}
 \mathcal A_\lambda(n):=
 \left\{\alpha=(\alpha_1,\ldots,\alpha_n)\in\mathcal M_m(n):
 \begin{array}{l}
 \operatorname{card}\{j\in\{1,\ldots,n\}:\alpha_j=r\}=m_r(\lambda)\\[-1mm]
 \text{for every }r\in\{1,\ldots,m\}
 \end{array}
 \right\}.
\end{equation}
Equivalently, $\alpha\in\mathcal A_\lambda(n)$ precisely when the positive
coordinates of $\alpha$, arranged in decreasing order, are
$\lambda_1,\ldots,\lambda_s$.

For instance, in degree $5$, the patterns of
$z_1^3z_4z_7$, $z_2^5$, and $z_1^2z_3^2z_8$ are $(3,1,1)$, $(5)$, and
$(2,2,1)$, respectively. Thus $(3,1,1,0)\in\mathcal A_{(3,1,1)}(4)$,
whereas $(3,2,0,0)\in\mathcal A_{(3,2)}(4)$.

We write $\lambda\vdash m$ when $\lambda$ is a partition of $m$.

For $m<p\leq\infty$, define
\begin{equation}\label{eq:G-definition}
 G_p(\lambda):=
 \begin{cases}
 \displaystyle\prod_{t=1}^s
      \left(\frac m{\lambda_t}\right)^{\lambda_t/p},&p<\infty,\\[2mm]
 1,&p=\infty.
 \end{cases}
\end{equation}
The projection and multilinear representation refine the construction in
\cite[Lemma~2.3 and the proof of Lemma~3.2]{NunezPellegrinoRaposoTeixeira2026}
by keeping the coordinate blocks separate. Related phase projections appear in
\cite[Lemma~5.5]{PellegrinoTeixeira2026}.

\begin{lemma}\label{lem:block-projection}
Let $m\geq2$, $m<p\leq\infty$, and let
$\lambda=(\lambda_1,\ldots,\lambda_s)$ be a partition of $m$ into $s$ positive parts. Let
\[
 P:\ell_p^n(\C)\longrightarrow\C,\qquad
 P(z)=\sum_{\alpha\in\mathcal M_m(n)}a_\alpha z^\alpha.
\]
Fix a map
\[
 \chi:\{1,\ldots,n\}\longrightarrow\{1,\ldots,s\},\qquad
 S_t:=\chi^{-1}(\{t\})\quad(1\leq t\leq s).
\]
Define
\begin{equation}\label{eq:block-coefficient-set}
 \Acal_{\lambda,\chi}(n):=
 \left\{\sum_{t=1}^s\lambda_t e_{j_t}:
                       j_t\in S_t\ (1\leq t\leq s)\right\}
 \subseteq\Acal_\lambda(n)
\end{equation}
and the $s$-linear form
\begin{equation}\label{eq:block-spaces}
 B_{\lambda,\chi}:\ell_{p/\lambda_1}(S_1)\times\cdots\times
                    \ell_{p/\lambda_s}(S_s)\longrightarrow\C
\end{equation}
by
\begin{equation}\label{eq:block-form}
 B_{\lambda,\chi}(x^{(1)},\ldots,x^{(s)})
 :=\sum_{j_1\in S_1}\cdots\sum_{j_s\in S_s}
 a_{\lambda_1 e_{j_1}+\cdots+\lambda_s e_{j_s}}
 \prod_{t=1}^s x^{(t)}_{j_t}.
\end{equation}
Here $p/\lambda_t=\infty$ if $p=\infty$. Then
\begin{equation}\label{eq:B-norm-bound}
 \|B_{\lambda,\chi}\|\leq G_p(\lambda)\|P\|_p.
\end{equation}
\end{lemma}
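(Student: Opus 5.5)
The plan is to realize $B_{\lambda,\chi}$ as a phase-averaged evaluation of $P$ at a suitably scaled point, and then to optimize the scaling.

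\textbf{Phase projection.} Fix $x^{(t)}\in\ell_{p/\lambda_t}(S_t)$ with $\|x^{(t)}\|_{p/\lambda_t}\le1$. For each $t$ and each $j\in S_t$ choose a $\lambda_t$-th root $y_j$ of $x^{(t)}_j$, so that $y_j^{\lambda_t}=x^{(t)}_j$ and $|y_j|=|x^{(t)}_j|^{1/\lambda_t}$. Then $\sum_{j\in S_t}|y_j|^p=\|x^{(t)}\|_{p/\lambda_t}^{p/\lambda_t}\le1$. For weights $\theta_1,\dots,\theta_s>0$ and phases $\omega\in\T^n$, define
\[
 z_j(\omega):=\theta_{\chi(j)}^{1/p}\,\omega_j\,y_j .
\]
Expanding $P(z(\omega))$ in monomials gives $\sum_\alpha a_\alpha\,\omega^\alpha\prod_j(\theta_{\chi(j)}^{1/p}y_j)^{\alpha_j}$.

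We want to isolate exactly the monomials with $\alpha\in\Acal_{\lambda,\chi}(n)$. The idea is to integrate $P(z(\omega))$ against a multiplier of the form $\prod_t\Phi_t(\omega|_{S_t})$, where $\Phi_t$ is a trigonometric polynomial on $\T^{S_t}$. Its Fourier projection should keep, within block $S_t$, only the restrictions $\alpha|_{S_t}=\lambda_te_j$ with $j\in S_t$. We need a contractive version (an $L^1$-norm one multiplier), as in the phase projections of \cite[Lemma~5.5]{PellegrinoTeixeira2026} and \cite[Lemma~2.3]{NunezPellegrinoRaposoTeixeira2026}.

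A natural route is to iterate simple averages. First, averaging over $\omega_j\mapsto e^{2\pi ik/\lambda_t}\omega_j$ keeps only exponents $\alpha_j\equiv0\pmod{\lambda_t}$. Second, the block-degree averages $\int_\T P(\ldots,\zeta\omega|_{S_t},\ldots)\bar\zeta^{\lambda_t}\,dm_\T(\zeta)$ fix the total degree $\lambda_t$ on $S_t$. Each average is a convex combination of point evaluations (times unimodular factors), hence contractive in sup norm.

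\textbf{Main obstacle.} Congruence mod $\lambda_t$ together with block degree $\lambda_t$ already forces $\alpha|_{S_t}=\lambda_te_j$ for a single $j\in S_t$, since a nonzero multiple of $\lambda_t$ summing to $\lambda_t$ must be one entry equal to $\lambda_t$. So the selection itself works. The delicate point is that the congruence averaging must be carried out coordinatewise over the roots of unity of order $\lambda_t$, and every operation must be a genuine average with unimodular weights so that no constant is lost. The block degree averaging over distinct $t$ is done with independent $\zeta_t$. This yields, with $y$ scaled,
\[
 \Bigl(\prod_t\theta_t^{\lambda_t/p}\Bigr)B_{\lambda,\chi}(x^{(1)},\dots,x^{(s)})
 =\text{average of }P(z'),\qquad |\text{average}|\le\sup_{\omega}|P(z(\omega))|.
\]
The left side follows because $z_j^{\lambda_t}=\theta_t^{\lambda_t/p}x^{(t)}_j$ when $\alpha|_{S_t}=\lambda_te_j$.

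\textbf{Optimizing the weights.} Every point $z'$ appearing in the average satisfies
\[
 \|z'\|_p^p=\sum_t\theta_t\sum_{j\in S_t}|y_j|^p\le\sum_t\theta_t .
\]
Choose $\theta_t=\lambda_t/m$, so that $\sum_t\theta_t=1$ and $\|z'\|_p\le1$. Hence
\[
 |B_{\lambda,\chi}(x)|\le\prod_t(m/\lambda_t)^{\lambda_t/p}\|P\|_p=G_p(\lambda)\|P\|_p .
\]
For $p=\infty$ take $\theta_t=1$ and $y_j$ with $|y_j|\le1$; the same argument gives the bound with $G_\infty(\lambda)=1$. Taking the supremum over the $x^{(t)}$ proves \eqref{eq:B-norm-bound}.
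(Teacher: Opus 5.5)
Your proposal is correct and follows essentially the same route as the paper. The paper composes two contractive projections:
\begin{itemize}
\item the block-phase projection $\Pi_{\lambda,\chi}$, which fixes the block degree $\lambda_t$ using independent phases $\omega_t$;
\item the coordinatewise root-of-unity averages $R_j$, which keep only monomials with $\lambda_{\chi(j)}\mid\alpha_j$.
\end{itemize}
It uses exactly your observation that these two conditions together force $\alpha|_{S_t}=\lambda_t e_j$. It then evaluates at $w_j=(\lambda_{\chi(j)}/m)^{1/p}y_j$ with $y_j^{\lambda_t}=x^{(t)}_j$, which is your choice $\theta_t=\lambda_t/m$.
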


\begin{proof}
If some $S_t$ is empty, the sum in \eqref{eq:block-form} is empty and
$B_{\lambda,\chi}=0$. Suppose every $S_t$ is nonempty.
For $\omega\in\T^s$, define
\[
 V_\omega:\C^n\longrightarrow\C^n,\qquad
 (V_\omega z)_j:=\omega_{\chi(j)}z_j\quad(1\leq j\leq n).
\]
If $p<\infty$, then
\[
 \|V_\omega z\|_p^p
 =\sum_{t=1}^s\sum_{j\in S_t}|\omega_tz_j|^p
 =\sum_{j=1}^n|z_j|^p=\|z\|_p^p.
\]
For $p=\infty$, the corresponding equality is
$\max_{1\leq j\leq n}|\omega_{\chi(j)}z_j|
 =\max_{1\leq j\leq n}|z_j|$.
Define the linear operator
\[
 \Pi_{\lambda,\chi}:\Pol_m(\C^n)\longrightarrow\Pol_m(\C^n)
\]
by
\begin{equation}\label{eq:block-phase-projection}
 (\Pi_{\lambda,\chi}P)(z)
 :=\int_{\T^s}P(V_\omega z)
        \prod_{t=1}^s\overline{\omega_t}^{\lambda_t}\,d\mathrm{m}_s(\omega).
\end{equation}
For $\alpha\in\mathcal M_m(n)$, put
$d_t(\alpha):=\sum_{j\in S_t}\alpha_j$. Formula
\eqref{eq:character-integral} gives
\begin{align}
 \Pi_{\lambda,\chi}(z^\alpha)
 &=z^\alpha\prod_{t=1}^s
       \int_{\T}\omega_t^{d_t(\alpha)-\lambda_t}\,d\mathrm{m}_1(\omega_t)
       \notag\\
 &=\begin{cases}
 z^\alpha,&d_t(\alpha)=\lambda_t\ (1\leq t\leq s),\\
 0,&\text{otherwise}.
 \end{cases}\label{eq:block-phase-monomial}
\end{align}
For $\|z\|_p\leq1$,
\[
 |(\Pi_{\lambda,\chi}P)(z)|
 \leq\int_{\T^s}|P(V_\omega z)|\,d\mathrm{m}_s(\omega)
 \leq\|P\|_p\,\mathrm{m}_s(\T^s)=\|P\|_p.
\]
Hence
\begin{equation}\label{eq:phase-contractive}
 \|\Pi_{\lambda,\chi}P\|_p\leq\|P\|_p.
\end{equation}

The projection $\Pi_{\lambda,\chi}$ fixes the total degree $\lambda_t$ in each
block $S_t$. To retain only exponents divisible by $\lambda_t$ inside each block,
for $j\in\{1,\ldots,n\}$, set
$\nu_j:=\lambda_{\chi(j)}$ and $\xi_j:=\exp(2\pi i/\nu_j)$.
For $k\in\{0,\ldots,\nu_j-1\}$, define
\[
 U_{j,k}:\C^n\to\C^n,\qquad
 U_{j,k}(z):=z+(\xi_j^k-1)z_je_j.
\]
Define $R_j:\Pol_m(\C^n)\to\Pol_m(\C^n)$ by
\begin{equation}\label{eq:root-projection}
 (R_jQ)(z):=\frac1{\nu_j}\sum_{k=0}^{\nu_j-1}Q(U_{j,k}z).
\end{equation}
Since $|\xi_j|=1$ and $U_{j,k}$ maps the $\ell_p$ unit ball onto itself,
\begin{equation}\label{eq:root-contractive}
 \|R_jQ\|_p
 \leq\frac1{\nu_j}\sum_{k=0}^{\nu_j-1}\|Q\circ U_{j,k}\|_p
 =\frac1{\nu_j}\sum_{k=0}^{\nu_j-1}\|Q\|_p
 =\|Q\|_p.
\end{equation}
For $\alpha\in\mathcal M_m(n)$,
\[
 R_j(z^\alpha)=\frac1{\nu_j}\left(\sum_{k=0}^{\nu_j-1}
                              \xi_j^{k\alpha_j}\right)z^\alpha.
\]
If $\nu_j\mid\alpha_j$, the sum is $\nu_j$. Otherwise
$\xi_j^{\alpha_j}\neq1$ and
\[
 \sum_{k=0}^{\nu_j-1}\xi_j^{k\alpha_j}
 =\frac{1-(\xi_j^{\alpha_j})^{\nu_j}}{1-\xi_j^{\alpha_j}}=0.
\]
Consequently,
\begin{equation}\label{eq:root-monomial}
 R_j(z^\alpha)=
 \begin{cases}z^\alpha,&\nu_j\mid\alpha_j,\\0,&\nu_j\nmid\alpha_j.
 \end{cases}
\end{equation}
Let
\[
 \mathcal Q_{\lambda,\chi}:=R_n\circ\cdots\circ R_1\circ\Pi_{\lambda,\chi}
 :\Pol_m(\C^n)\longrightarrow\Pol_m(\C^n).
\]
By \eqref{eq:block-phase-monomial} and \eqref{eq:root-monomial}, a
monomial is unchanged by $\mathcal Q_{\lambda,\chi}$ exactly when
\[
 \sum_{j\in S_t}\alpha_j=\lambda_t,
 \qquad \alpha_j\in\lambda_t\N_0\quad(j\in S_t),
 \qquad 1\leq t\leq s.
\]
Indeed, writing $\alpha_j=\lambda_t b_j$ within $S_t$ gives
$b_j\in\N_0$ and $\sum_{j\in S_t}b_j=1$. Exactly one $b_j$ equals
$1$ and the others are zero. Thus
\begin{equation}\label{eq:projected-polynomial}
 \begin{split}
 (\mathcal Q_{\lambda,\chi}P)(z)
 &=\sum_{\alpha\in\Acal_{\lambda,\chi}(n)}a_\alpha z^\alpha\\
 &=B_{\lambda,\chi}\bigl((z_j^{\lambda_1})_{j\in S_1},\ldots,
                         (z_j^{\lambda_s})_{j\in S_s}\bigr).
 \end{split}
\end{equation}
This coefficient formula also gives
$\mathcal Q_{\lambda,\chi}^2=\mathcal Q_{\lambda,\chi}$.
Equations \eqref{eq:phase-contractive} and \eqref{eq:root-contractive}
imply
\begin{equation}\label{eq:projection-norm}
 \|\mathcal Q_{\lambda,\chi}P\|_p\leq\|P\|_p.
\end{equation}

Take $x^{(t)}\in\ell_{p/\lambda_t}(S_t)$ with
$\|x^{(t)}\|_{p/\lambda_t}\leq1$ for $1\leq t\leq s$.
For each $j\in S_t$, choose $y_j\in\C$ satisfying
$y_j^{\lambda_t}=x_j^{(t)}$. Thus
$|y_j|=|x_j^{(t)}|^{1/\lambda_t}$.
If $p<\infty$, set
\[
 c_t:=\left(\frac{\lambda_t}{m}\right)^{1/p},\qquad
 w:=(c_{\chi(j)}y_j)_{j=1}^n\in\C^n.
\]
Then
\begin{align*}
 \|w\|_p^p
 &=\sum_{t=1}^s c_t^p\sum_{j\in S_t}|y_j|^p\\
 &=\sum_{t=1}^s\frac{\lambda_t}{m}
       \sum_{j\in S_t}|x_j^{(t)}|^{p/\lambda_t}
 \leq\sum_{t=1}^s\frac{\lambda_t}{m}=1.
\end{align*}
Using \eqref{eq:projected-polynomial} at $w$ gives
\[
 (\mathcal Q_{\lambda,\chi}P)(w)
 =\left(\prod_{t=1}^s c_t^{\lambda_t}\right)
                     B_{\lambda,\chi}(x^{(1)},\ldots,x^{(s)}).
\]
Therefore
\begin{align*}
 |B_{\lambda,\chi}(x^{(1)},\ldots,x^{(s)})|
 &\leq\left(\prod_{t=1}^s c_t^{-\lambda_t}\right)
               \|\mathcal Q_{\lambda,\chi}P\|_p\\
 &\leq\prod_{t=1}^s\left(\frac m{\lambda_t}\right)^{\lambda_t/p}
       \|P\|_p=G_p(\lambda)\|P\|_p.
\end{align*}
For $p=\infty$, take $c_t=1$ for every $t$ and $w=(y_j)_{j=1}^n$.
Then $\|w\|_\infty\leq1$, and \eqref{eq:projected-polynomial} gives
$|B_{\lambda,\chi}(x^{(1)},\ldots,x^{(s)})|\leq\|P\|_\infty$.
Taking the supremum over the $x^{(t)}$ proves \eqref{eq:B-norm-bound}.
\end{proof}

\subsection{Counting admissible maps and the one-pattern estimate}

For $n,s\in\N$, let
\begin{equation}\label{eq:color-space}
 \Omega_{n,s}:=\{\chi:\{1,\ldots,n\}\to\{1,\ldots,s\}\},\qquad
 \mathbb P_{n,s}(E):=\frac{\card E}{s^n}\quad(E\subseteq\Omega_{n,s}).
\end{equation}
Thus $\mathbb P_{n,s}$ is the uniform probability measure on
$\Omega_{n,s}$. For
$X:\Omega_{n,s}\to\R$, write
$\mathbb E_{n,s}X:=s^{-n}\sum_{\chi\in\Omega_{n,s}}X(\chi)$.
For the example $\lambda=(3,1,1)$ from \eqref{eq:pattern-set}, we have $s=3$,
$m_1(\lambda)=2$ and $m_3(\lambda)=1$. In this case,
\[
 \mathfrak p_{(3,1,1)}=\frac{2!\,1!}{3^3}=\frac{2}{27}.
\]
The counting identity below is \cite[Lemma~2.4]{NunezPellegrinoRaposoTeixeira2026}.

\begin{lemma}\label{lem:survival}
Let $\lambda=(\lambda_1,\ldots,\lambda_s)$ be a partition of $m$ into $s$ positive parts, and let
$\alpha\in\Acal_\lambda(n)$. Then
\begin{equation}\label{eq:survival-probability}
 \mathbb P_{n,s}\{\chi:\alpha\in\Acal_{\lambda,\chi}(n)\}
 =\mathfrak p_\lambda,
 \qquad
 \mathfrak p_\lambda:=\frac{\prod_{r=1}^m m_r(\lambda)!}{s^s}.
\end{equation}
\end{lemma}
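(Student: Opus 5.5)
We prove the identity by counting the maps $\chi\in\Omega_{n,s}$ that place $\alpha$ in $\Acal_{\lambda,\chi}(n)$ and dividing by $s^n$.

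Since $\alpha\in\Acal_\lambda(n)$, its support $J:=\{j:\alpha_j>0\}$ has exactly $s$ elements. The nonzero values $\alpha_j$, $j\in J$, are a rearrangement of $\lambda_1,\ldots,\lambda_s$.

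\textbf{Step 1: characterize membership.} By \eqref{eq:block-coefficient-set}, $\alpha\in\Acal_{\lambda,\chi}(n)$ holds if and only if there are indices $j_t\in S_t=\chi^{-1}(\{t\})$, $1\leq t\leq s$, with $\alpha=\sum_{t=1}^s\lambda_te_{j_t}$. The sets $S_t$ are pairwise disjoint, so the $j_t$ are distinct. Comparing supports, this happens exactly when $\chi|_J:J\to\{1,\ldots,s\}$ is a bijection and $\lambda_{\chi(j)}=\alpha_j$ for every $j\in J$.

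For the converse, if $\chi|_J$ is such a bijection, set $j_t:=(\chi|_J)^{-1}(t)$. Then $j_t\in S_t$ and $\sum_t\lambda_te_{j_t}=\sum_{j\in J}\alpha_je_j=\alpha$. The values of $\chi$ off $J$ impose no condition at all.

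\textbf{Step 2: count the admissible restrictions to $J$.} For each value $r$, the condition requires $\chi$ to map the $m_r(\lambda)$ points $\{j\in J:\alpha_j=r\}$ bijectively onto the $m_r(\lambda)$ positions $\{t:\lambda_t=r\}$. These sets have equal size because $\alpha\in\Acal_\lambda(n)$. The choices for different $r$ are independent, so there are $\prod_{r=1}^m m_r(\lambda)!$ admissible restrictions $\chi|_J$.

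\textbf{Step 3: count the free extensions.} Each of the $n-s$ coordinates outside $J$ may receive any of the $s$ colors, giving a factor $s^{n-s}$. The number of admissible $\chi$ is therefore $s^{n-s}\prod_r m_r(\lambda)!$. Dividing by $\card\Omega_{n,s}=s^n$ yields $\mathfrak p_\lambda=\prod_r m_r(\lambda)!/s^s$, which is \eqref{eq:survival-probability}.

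The only point requiring care is Step 1. It rests on the disjointness of the blocks $S_t$, which forces the indices $j_t$ to be distinct, so that $\sum_t\lambda_te_{j_t}$ has support of size exactly $s$ and prescribed values; a coordinate can never receive a sum $\lambda_t+\lambda_{t'}$. The rest is elementary counting, and the statement is \cite[Lemma~2.4]{NunezPellegrinoRaposoTeixeira2026}.
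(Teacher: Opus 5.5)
Your proof is correct and follows the paper's argument: membership of $\alpha$ in $\Acal_{\lambda,\chi}(n)$ is equivalent to $\chi$ mapping each level set $\{j:\alpha_j=r\}$ bijectively onto $\{t:\lambda_t=r\}$, which gives $\prod_r m_r(\lambda)!$ admissible restrictions, times $s^{n-s}$ free choices off the support, divided by $s^n$. Your Step 1 justifies this equivalence, via the disjointness of the blocks $S_t$, more explicitly than the paper, which simply asserts it.
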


\begin{proof}
For $1\leq r\leq m$, define
\[
 J_r(\alpha):=\{j\in\{1,\ldots,n\}:\alpha_j=r\},\qquad
 T_r(\lambda):=\{t\in\{1,\ldots,s\}:\lambda_t=r\}.
\]
Both sets have cardinality $m_r(\lambda)$. By
\eqref{eq:block-coefficient-set}, the condition
$\alpha\in\Acal_{\lambda,\chi}(n)$ is equivalent to requiring
$\chi|_{J_r(\alpha)}:J_r(\alpha)\to T_r(\lambda)$ to be a bijection
for each $r$. There are $m_r(\lambda)!$ choices for this restriction.
The $n-s$ coordinates for which $\alpha_j=0$ may be mapped arbitrarily.
Hence
\[
 \card\{\chi\in\Omega_{n,s}:\alpha\in\Acal_{\lambda,\chi}(n)\}
 =s^{n-s}\prod_{r=1}^m m_r(\lambda)!.
\]
Dividing by $s^n$ proves \eqref{eq:survival-probability}.
\end{proof}

Let $G_p(\lambda)$ be defined by \eqref{eq:G-definition}.
Set $q:=q(m,p)$, with $q(m,p)$ defined in
\eqref{eq:HL-exponent}, and put
\begin{equation}\label{eq:D-pattern}
 D_{s,m,p}:=
 \begin{cases}
 (2/\sqrt\pi)^{2(s-1)/q},&m<p<2m,\\
 (2/\sqrt\pi)^{s-1},&2m\leq p\leq\infty.
 \end{cases}
\end{equation}
The coefficient decomposition in
\cite[proof of Lemma~3.2]{NunezPellegrinoRaposoTeixeira2026}, together with
Lemma~\ref{lem:anisotropic}, gives the estimate below in both Hardy--Littlewood ranges.

\begin{lemma}\label{lem:one-pattern}
Let $m\geq2$, $m<p\leq\infty$, and let
$\lambda=(\lambda_1,\ldots,\lambda_s)$ be a partition of $m$ into $s$ positive parts. Every
$P:\ell_p^n(\C)\to\C$, $P(z)=\sum_{\alpha\in\mathcal M_m(n)}a_\alpha z^\alpha$,
satisfies
\begin{equation}\label{eq:one-pattern}
 \left(\sum_{\alpha\in\Acal_\lambda(n)}|a_\alpha|^q\right)^{1/q}
 \leq D_{s,m,p}G_p(\lambda)\mathfrak p_\lambda^{-1/q}\|P\|_p,
 \qquad q=q(m,p).
\end{equation}
\end{lemma}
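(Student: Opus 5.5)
The plan is to average over the random colorings $\chi\in\Omega_{n,s}$. For each fixed $\chi$ we bound the coefficients in $\Acal_{\lambda,\chi}(n)$ using Lemma~\ref{lem:block-projection} and Lemma~\ref{lem:anisotropic}. Averaging with Lemma~\ref{lem:survival} then recovers all of $\Acal_\lambda(n)$, at the cost of the factor $\mathfrak p_\lambda^{-1/q}$.

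\textbf{Step 1 (fixed $\chi$).} The map $(j_1,\ldots,j_s)\mapsto\sum_t\lambda_te_{j_t}$, from $S_1\times\cdots\times S_s$ onto $\Acal_{\lambda,\chi}(n)$, is injective because the blocks $S_t$ are disjoint. Hence
\[
 \sum_{\alpha\in\Acal_{\lambda,\chi}(n)}|a_\alpha|^q=\sum_{j_1\in S_1}\cdots\sum_{j_s\in S_s}|B_{\lambda,\chi}(e_{j_1},\ldots,e_{j_s})|^q .
\]
Apply Lemma~\ref{lem:anisotropic} to $B_{\lambda,\chi}$ with $r_t=p/\lambda_t$. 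Then $\sigma=\sum_t\lambda_t/p=m/p<1$, and $\sigma=0$ when $p=\infty$.

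We next check $\rho(s,m/p)\leq q(m,p)$ and match the constants.
\begin{itemize}
\item If $2m\leq p$, then $\sigma\leq1/2$ and $\rho=2s/(s+1-2m/p)$. Since $s\leq m$, this is at most $2m/(m+1-2m/p)=q(m,p)$. Indeed, the map $s\mapsto 2s/(s+c)$ with $c=1-2m/p\geq0$ is nondecreasing. The constant is $(2/\sqrt\pi)^{s-1}=D_{s,m,p}$.
\item If $m<p<2m$, then $\rho=(1-m/p)^{-1}=p/(p-m)=q$. The constant is $(2/\sqrt\pi)^{2(s-1)(1-m/p)}=(2/\sqrt\pi)^{2(s-1)/q}=D_{s,m,p}$.
\end{itemize}
Monotonicity of $\ell_r$ norms then gives
\[
 \Bigl(\sum_{\alpha\in\Acal_{\lambda,\chi}(n)}|a_\alpha|^q\Bigr)^{1/q}\leq D_{s,m,p}\|B_{\lambda,\chi}\|\leq D_{s,m,p}G_p(\lambda)\|P\|_p ,
\]
where the last inequality is \eqref{eq:B-norm-bound}. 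Empty blocks are trivial.

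\textbf{Step 2 (averaging).} Raise the Step~1 bound to the power $q$ and take $\mathbb E_{n,s}$. Exchanging the sums gives
\[
 \mathbb E_{n,s}\sum_{\alpha\in\Acal_{\lambda,\chi}(n)}|a_\alpha|^q=\sum_{\alpha\in\Acal_\lambda(n)}|a_\alpha|^q\,\mathbb P_{n,s}\{\chi:\alpha\in\Acal_{\lambda,\chi}(n)\}=\mathfrak p_\lambda\sum_{\alpha\in\Acal_\lambda(n)}|a_\alpha|^q,
\]
by Lemma~\ref{lem:survival}, and this is at most $(D_{s,m,p}G_p(\lambda)\|P\|_p)^q$. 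Dividing by $\mathfrak p_\lambda$ and taking $q$-th roots yields \eqref{eq:one-pattern}.

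\textbf{Main obstacle.} The only delicate point is the exponent comparison $\rho(s,m/p)\leq q(m,p)$ in the range $p\geq2m$, with the boundary cases $s=1$ and $p=\infty$. For $s=1$, $\rho=(1-m/p)^{-1}$ falls in the other branch of \eqref{eq:rho-D} when $m/p\leq 1/2$. The statement of Lemma~\ref{lem:anisotropic} already covers $s=1$ via duality, so I would simply verify $p/(p-m)\leq q$ directly there. This inequality reduces to $p(p-m)\ldots$ algebra, or follows from the monotonicity argument above once both branches are compared at $s=1$.
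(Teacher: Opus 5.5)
Your proposal is correct and matches the paper's proof essentially step for step: for fixed $\chi$ you bound the block coefficients via Lemma~\ref{lem:anisotropic} and \eqref{eq:B-norm-bound}, obtaining $\rho(s,m/p)\le q(m,p)$ from $s\le m$ when $p\ge 2m$, and you then average over colorings using Lemma~\ref{lem:survival}. The $s=1$ concern in your last paragraph is not a real obstacle: the first branch of \eqref{eq:rho-D} evaluated at $s=1$ already equals $(1-\sigma)^{-1}$, so your monotonicity argument covers that case without any separate check.
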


\begin{proof}
If $s>n$, then $\Acal_\lambda(n)=\varnothing$ and the assertion holds.
Suppose $s\leq n$. Fix $\chi\in\Omega_{n,s}$. If some $S_t$ is empty,
$\Acal_{\lambda,\chi}(n)=\varnothing$. Otherwise apply
Lemma~\ref{lem:anisotropic} to the form in \eqref{eq:block-form} with
\[
 r_t=\frac p{\lambda_t}>1\quad(1\leq t\leq s),\qquad
 \sum_{t=1}^s\frac1{r_t}=\frac1p\sum_{t=1}^s\lambda_t=\frac mp<1.
\]
The convention $1/\infty=0$ includes $p=\infty$.
If $m<p<2m$, the exponent in \eqref{eq:rho-D} is
\[
 \rho(s,m/p)=\frac1{1-m/p}=q,
 \qquad D_\C(s,m/p)=(2/\sqrt\pi)^{2(s-1)/q}=D_{s,m,p}.
\]
If $2m\leq p\leq\infty$, then
\begin{equation}\label{eq:pattern-exponent-comparison}
 \frac1{\rho(s,m/p)}
 =\frac12+\frac{1-2m/p}{2s}
 \geq\frac12+\frac{1-2m/p}{2m}
 =\frac1q,
\end{equation}
since $s\leq m$ and $1-2m/p\geq0$. Thus $\rho(s,m/p)\leq q$;
monotonicity of finite sequence norms gives the $\ell_q$ estimate
with constant $D_{s,m,p}=(2/\sqrt\pi)^{s-1}$.
Since the sets $S_t$ are disjoint, each $\alpha\in\Acal_{\lambda,\chi}(n)$
has a unique expression $\alpha=\sum_{t=1}^s\lambda_t e_{j_t}$ with
$j_t\in S_t$. Therefore
\[
 \sum_{j_1\in S_1}\cdots\sum_{j_s\in S_s}
 |B_{\lambda,\chi}(e_{j_1},\ldots,e_{j_s})|^q
 =\sum_{\alpha\in\Acal_{\lambda,\chi}(n)}|a_\alpha|^q.
\]
In both ranges, \eqref{eq:B-norm-bound} now gives
\begin{equation}\label{eq:projected-q}
 \sum_{\alpha\in\Acal_{\lambda,\chi}(n)}|a_\alpha|^q
 \leq D_{s,m,p}^qG_p(\lambda)^q\|P\|_p^q.
\end{equation}
If some $S_t$ is empty, then $\Acal_{\lambda,\chi}(n)=\varnothing$, so \eqref{eq:projected-q} remains valid.

For each $\alpha\in\Acal_\lambda(n)$, define
\[
 \mathbf1_\alpha:\Omega_{n,s}\to\{0,1\},\qquad
 \mathbf1_\alpha(\chi):=
 \begin{cases}1,&\alpha\in\Acal_{\lambda,\chi}(n),\\0,&\text{otherwise}.
 \end{cases}
\]
Lemma~\ref{lem:survival} gives
$\mathbb E_{n,s}\mathbf1_\alpha=\mathfrak p_\lambda$.
Averaging the finite sums in \eqref{eq:projected-q} gives
\begin{align*}
 \mathfrak p_\lambda\sum_{\alpha\in\Acal_\lambda(n)}|a_\alpha|^q
 &=\sum_{\alpha\in\Acal_\lambda(n)}|a_\alpha|^q
                    \mathbb E_{n,s}\mathbf1_\alpha\\
 &=\frac1{s^n}\sum_{\chi\in\Omega_{n,s}}
          \sum_{\alpha\in\Acal_\lambda(n)}
                 \mathbf1_\alpha(\chi)|a_\alpha|^q\\
 &=\frac1{s^n}\sum_{\chi\in\Omega_{n,s}}
          \sum_{\alpha\in\Acal_{\lambda,\chi}(n)}|a_\alpha|^q\\
 &\leq D_{s,m,p}^qG_p(\lambda)^q\|P\|_p^q.
\end{align*}
Since $\mathfrak p_\lambda>0$, division and taking $q$th roots prove
\eqref{eq:one-pattern}.
\end{proof}

\section{From one pattern to the global estimate}\label{sec:global}

The partition dependence is determined by its length and multiplicities.

Set $q=q(m,p)$ and
\begin{equation}\label{eq:beta-definition}
 \theta:=\frac{mq}{p},
\end{equation}
with $\theta=0$ for $p=\infty$. The quantity
$G_p(\lambda)$ is defined in \eqref{eq:G-definition}.

\begin{lemma}\label{lem:G-bound}
Let $\lambda=(\lambda_1,\ldots,\lambda_s)$ be a partition of $m$ into $s$ positive parts. Then
\begin{equation}\label{eq:G-bound}
 G_p(\lambda)^q\leq s^\theta.
\end{equation}
Moreover,
\[
 \theta=
 \begin{cases}
 m/(p-m)>1,&m<p<2m,\\[1mm]
 2m^2/[p(m+1)-2m]\leq1,&2m\leq p<\infty,\\[1mm]
 0,&p=\infty.
 \end{cases}
\]
\end{lemma}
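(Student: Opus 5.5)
The case $p=\infty$ is immediate: there $G_\infty(\lambda)=1$ and $\theta=0$, so both sides of \eqref{eq:G-bound} equal $1$. For $p<\infty$ we compute
\[
 G_p(\lambda)^q=\prod_{t=1}^s\Bigl(\frac m{\lambda_t}\Bigr)^{\lambda_tq/p}
 =\exp\Bigl(\frac{mq}{p}\sum_{t=1}^s\frac{\lambda_t}{m}\log\frac m{\lambda_t}\Bigr)
 =\exp\bigl(\theta\,H(\lambda/m)\bigr),
\]
where $H(\lambda/m)$ is the Shannon entropy of the probability vector $(\lambda_1/m,\ldots,\lambda_s/m)$. An $s$-point distribution has entropy at most $\log s$. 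This follows from Jensen's inequality for the concave function $\log$:
\[
 \sum_t\frac{\lambda_t}{m}\log\frac{m}{\lambda_t}\le\log\Bigl(\sum_t\frac{\lambda_t}{m}\cdot\frac{m}{\lambda_t}\Bigr)=\log s .
\]
Since $\theta\ge0$, we conclude $G_p(\lambda)^q\le e^{\theta\log s}=s^\theta$.

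For the formulas for $\theta=mq/p$ we substitute the definition \eqref{eq:HL-exponent}.

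\textbf{Range $m<p<2m$.} Here $q=p/(p-m)$, so $\theta=m/(p-m)$. This exceeds $1$ because $p<2m$ means $p-m<m$.

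\textbf{Range $2m\le p<\infty$.} Here $q=2mp/(mp+p-2m)$, so
\[
 \theta=\frac{2m^2}{mp+p-2m}=\frac{2m^2}{p(m+1)-2m}.
\]
The condition $\theta\le1$ is equivalent to $2m^2+2m\le p(m+1)$, that is, to $p\ge 2m$, which holds in this range.

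There is no real obstacle here. The only point needing care is recognizing the exponent as an entropy, so that the bound $\log s$ (rather than something cruder such as $\log m$) comes out of Jensen's inequality.
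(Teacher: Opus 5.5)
Your proposal is correct and follows essentially the same argument as the paper. The paper also writes $\log G_p(\lambda)^q=\theta\sum_t x_t\log(1/x_t)$ with $x_t=\lambda_t/m$, bounds the sum by $\log s$ using concavity of the logarithm, and reads off $\theta=mq/p$ in each range from the definition of $q(m,p)$, including the equivalence of $\theta\le1$ with $p\ge2m$.
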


\begin{proof}
For $p=\infty$, both sides of \eqref{eq:G-bound} are $1$.
Let $p<\infty$ and set $x_t:=\lambda_t/m$ for $1\leq t\leq s$.
Then $x_t>0$ and $\sum_{t=1}^s x_t=1$. Concavity of the logarithm gives
\[
 \sum_{t=1}^s x_t\log\frac1{x_t}
 \leq\log\left(\sum_{t=1}^s x_t\frac1{x_t}\right)=\log s.
\]
Therefore
\begin{align*}
 \log G_p(\lambda)^q
 &=\frac qp\sum_{t=1}^s\lambda_t\log\frac m{\lambda_t}\\
 &=\frac{mq}{p}\sum_{t=1}^s x_t\log\frac1{x_t}
 \leq\theta\log s.
\end{align*}
Exponentiation proves \eqref{eq:G-bound}. If $m<p<2m$, then
$mq/p=m/(p-m)>1$. For $2m\leq p<\infty$,
\[
 \frac{mq}{p}=\frac{2m^2}{p(m+1)-2m},\qquad
 p(m+1)-2m\geq2m(m+1)-2m=2m^2.
\]
This gives the remaining formulas.
\end{proof}

Let $\lambda\vdash m$, with $\len(\lambda)=s$ and multiplicities
$m_r(\lambda)$ defined by
$m_r(\lambda)=\card\{t:\lambda_t=r\}$ as in Section~\ref{sec:decomposition}.
The sum in \eqref{eq:composition-identity} is over partitions of $m$ of length $s$.

\begin{lemma}\label{lem:composition-count}
For $1\leq s\leq m$,
\begin{equation}\label{eq:composition-identity}
 \sum_{\substack{\lambda\vdash m\\\len(\lambda)=s}}
     \frac1{\prod_{r=1}^m m_r(\lambda)!}
 =\frac1{s!}\binom{m-1}{s-1}.
\end{equation}
\end{lemma}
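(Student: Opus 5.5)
The identity follows from a double count of compositions.

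A composition of $m$ into $s$ positive parts is an ordered tuple $(c_1,\ldots,c_s)\in\N^s$ with $c_1+\cdots+c_s=m$. By stars and bars there are exactly $\binom{m-1}{s-1}$ such tuples: they correspond bijectively to choosing $s-1$ of the $m-1$ gaps between $m$ units in a row.

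We then group the compositions according to the partition obtained by sorting their entries in decreasing order. Fix $\lambda\vdash m$ with $\len(\lambda)=s$. The compositions that sort to $\lambda$ are the distinct rearrangements of the multiset $\{\lambda_1,\ldots,\lambda_s\}$. The value $r$ occurs $m_r(\lambda)$ times, so the number of such rearrangements is the multinomial coefficient
\[
 \frac{s!}{\prod_{r=1}^m m_r(\lambda)!}.
\]
Every composition sorts to exactly one partition of $m$ of length $s$, so these classes partition the set of compositions. Hence
\[
 \sum_{\substack{\lambda\vdash m\\\len(\lambda)=s}}
 \frac{s!}{\prod_{r=1}^m m_r(\lambda)!}=\binom{m-1}{s-1}.
\]

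Dividing both sides by $s!$ gives \eqref{eq:composition-identity}.

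The only step needing care is the orbit count: the number of distinct orderings equals $s!$ divided by the product of the factorials of the repetition counts. This holds because the stabilizer of a fixed arrangement under the action of the symmetric group $S_s$ is the product of the symmetric groups on the positions carrying each repeated value, which has order $\prod_{r} m_r(\lambda)!$. Equivalently, the same computation appears in Lemma~\ref{lem:survival}, where the restrictions $\chi|_{J_r(\alpha)}$ must be bijections onto $T_r(\lambda)$ and there are $m_r(\lambda)!$ choices for each $r$.
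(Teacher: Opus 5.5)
Your proof is correct and follows essentially the same route as the paper: count the $\binom{m-1}{s-1}$ compositions of $m$ into $s$ positive parts, group them by their decreasing rearrangement so that each partition $\lambda$ of length $s$ accounts for $s!/\prod_{r=1}^m m_r(\lambda)!$ of them, and divide by $s!$. The paper makes the stars-and-bars count explicit via the partial-sum bijection $\Phi$, but this is the same argument.
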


\begin{proof}
Let
\[
 \mathcal C_{m,s}:=\{(k_1,\ldots,k_s)\in\N^s:k_1+\cdots+k_s=m\}.
\]
For $s\geq2$, define
\[
 \mathcal U_{m,s}:=\{E\subseteq\{1,\ldots,m-1\}:\card E=s-1\}
\]
and
\[
 \begin{split}
 \Phi:\mathcal C_{m,s}&\longrightarrow\mathcal U_{m,s},\\
 \Phi(k_1,\ldots,k_s)&:=
 \{k_1,k_1+k_2,\ldots,k_1+\cdots+k_{s-1}\}.
 \end{split}
\]
For $E=\{u_1<\cdots<u_{s-1}\}\in\mathcal U_{m,s}$, set
$u_0:=0$ and $u_s:=m$. The inverse is
\[
 \Phi^{-1}:\mathcal U_{m,s}\longrightarrow\mathcal C_{m,s},\qquad
 \Phi^{-1}(E):=(u_t-u_{t-1})_{t=1}^s.
\]
For $s=1$, $\mathcal C_{m,1}=\{(m)\}$.
Thus $\card\mathcal C_{m,s}=\binom{m-1}{s-1}$ in all cases.
A partition $\lambda$ of length $s$ has
$s!/\prod_{r=1}^m m_r(\lambda)!$ distinct orderings, because each
of the $m_r(\lambda)$ equal parts can be permuted without changing
the ordered tuple. Partitioning $\mathcal C_{m,s}$ by its decreasing
rearrangement gives
\[
 \binom{m-1}{s-1}
 =\sum_{\substack{\lambda\vdash m\\\len(\lambda)=s}}
                  \frac{s!}{\prod_{r=1}^m m_r(\lambda)!}.
\]
Division by $s!$ proves \eqref{eq:composition-identity}.
\end{proof}

\begin{lemma}\label{lem:global-sum}
For $m\geq2$ and $m<p\leq\infty$,
\begin{equation}\label{eq:global-sum}
 (\Hpol_{m,p}(\C))^q
 \leq\sum_{s=1}^m s^\theta D_{s,m,p}^{\,q}
                      \frac{s^s}{s!}\binom{m-1}{s-1}.
\end{equation}
\end{lemma}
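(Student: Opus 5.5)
The plan is to split the coefficient sum by multiplicity pattern, bound each pattern with Lemma~\ref{lem:one-pattern}, and then regroup by length using Lemma~\ref{lem:composition-count}.

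\textbf{Step 1: decomposition.} Since $q=q(m,p)$ is finite, the $q$th power of $|P|_q$ splits over multiplicity patterns. Every $\alpha\in\mathcal M_m(n)$ lies in exactly one $\Acal_\lambda(n)$, namely the one whose $\lambda$ is the decreasing rearrangement of the positive coordinates of $\alpha$. Hence
\[
 |P|_q^q=\sum_{s=1}^m\sum_{\substack{\lambda\vdash m\\\len(\lambda)=s}}
 \sum_{\alpha\in\Acal_\lambda(n)}|a_\alpha|^q .
\]

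\textbf{Step 2: one-pattern bound.} For each $\lambda$ of length $s$, raise \eqref{eq:one-pattern} to the power $q$:
\[
 \sum_{\alpha\in\Acal_\lambda(n)}|a_\alpha|^q
 \leq D_{s,m,p}^{\,q}\,G_p(\lambda)^q\,\mathfrak p_\lambda^{-1}\,\|P\|_p^q .
\]
Now insert Lemma~\ref{lem:G-bound}, which gives $G_p(\lambda)^q\leq s^\theta$. Insert also the explicit formula from Lemma~\ref{lem:survival}:
\[
 \mathfrak p_\lambda^{-1}=\frac{s^s}{\prod_{r} m_r(\lambda)!}.
\]
The factors $s^\theta$, $D_{s,m,p}^{\,q}$ and $s^s$ depend only on $s$, not on the particular partition. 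They can therefore be pulled out of the inner sum over $\lambda$ with $\len(\lambda)=s$.

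\textbf{Step 3: counting.} The remaining inner sum is
\[
 \sum_{\substack{\lambda\vdash m\\\len(\lambda)=s}}\frac1{\prod_r m_r(\lambda)!},
\]
and Lemma~\ref{lem:composition-count} evaluates it as $\frac1{s!}\binom{m-1}{s-1}$. This gives
\[
 |P|_q^q\leq\|P\|_p^q\sum_{s=1}^m s^\theta D_{s,m,p}^{\,q}\frac{s^s}{s!}\binom{m-1}{s-1}.
\]
This bound is uniform in $n$ and in $P$. Taking the supremum and using the definition of $\Hpol_{m,p}(\C)$ as the least constant yields \eqref{eq:global-sum}.

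\textbf{Obstacle.} There is essentially none; the argument is bookkeeping. The only points to check are the following.
\begin{itemize}
 \item Patterns with $s>n$ contribute nothing, consistent with Lemma~\ref{lem:one-pattern}.
 \item The case $p=\infty$ is covered by the convention $\theta=0$.
 \item $q<\infty$ in every range, so the additive splitting of $\ell_q^q$ across the disjoint sets $\Acal_\lambda(n)$ is legitimate.
\end{itemize}
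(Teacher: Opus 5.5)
Your proposal is correct and follows the paper's own proof: decompose $|P|_q^q$ over the pattern classes $\Acal_\lambda(n)$, apply Lemma~\ref{lem:one-pattern} together with $G_p(\lambda)^q\leq s^\theta$ and $\mathfrak p_\lambda^{-1}=s^s/\prod_r m_r(\lambda)!$, and evaluate the remaining sum over partitions of length $s$ with Lemma~\ref{lem:composition-count}. Your remarks on finiteness of $q$, the convention $\theta=0$ at $p=\infty$, and patterns with $s>n$ are the same edge cases the paper handles implicitly.
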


\begin{proof}
Let $P\in\Pol_m(\C^n)$. The classes in \eqref{eq:pattern-set}
partition $\mathcal M_m(n)$. Hence, by
Lemmas~\ref{lem:one-pattern} and \ref{lem:G-bound},
\begin{align*}
 |P|_q^q
 &=\sum_{s=1}^m\sum_{\substack{\lambda\vdash m\\\len(\lambda)=s}}
                     \sum_{\alpha\in\Acal_\lambda(n)}|a_\alpha|^q\\
 &\leq\|P\|_p^q\sum_{s=1}^m
 D_{s,m,p}^q\sum_{\substack{\lambda\vdash m\\\len(\lambda)=s}}
       G_p(\lambda)^q\frac{s^s}{\prod_{r=1}^m m_r(\lambda)!}\\
 &\leq\|P\|_p^q\sum_{s=1}^m
 s^\theta D_{s,m,p}^q s^s
       \sum_{\substack{\lambda\vdash m\\\len(\lambda)=s}}
                       \frac1{\prod_{r=1}^m m_r(\lambda)!}\\
 &=\|P\|_p^q\sum_{s=1}^m
 s^\theta D_{s,m,p}^q\frac{s^s}{s!}\binom{m-1}{s-1},
\end{align*}
where \eqref{eq:composition-identity} is used in the last line.
The bound holds for every $n$ and every $P$, which proves
\eqref{eq:global-sum}.
\end{proof}

\subsection{A localized consequence for small support}\label{subsec:small-support}

For a multiindex $\alpha\in\mathcal M_m(n)$, write
\[
 \supp\alpha:=\{j\in\{1,\ldots,n\}:\alpha_j\neq0\}.
\]
The length of the multiplicity pattern of $\alpha$ is precisely
$\card(\supp\alpha)$. Thus the sum over support sizes $s$ can be stopped before the
large-support sector.

\begin{corollary}\label{cor:small-support}
There is an absolute constant $C>0$ such that, for every $m\geq2$,
$2m\leq p\leq\infty$, $1\leq L\leq m$, and every
$P(z)=\sum_{|\alpha|=m}a_\alpha z^\alpha\in\Pol_m(\C^n)$,
\begin{equation}\label{eq:small-support}
 \left(
 \sum_{\substack{|\alpha|=m\\\card(\supp\alpha)\leq L}}
 |a_\alpha|^{q(m,p)}
 \right)^{1/q(m,p)}
 \leq
 \exp\!\left\{CL\log\frac{em}{L}\right\}\|P\|_p.
\end{equation}
In particular, if $p_m\geq2m$ and $p_m/m\to\infty$, then, with
\begin{equation}\label{eq:Lm-small-support}
 L_m:=\max\left\{1,\left\lceil\frac{2m^2}{p_m}\right\rceil\right\},
\end{equation}
the constant in \eqref{eq:small-support} is $\exp(o(m))$.
\end{corollary}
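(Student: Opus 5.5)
The plan is to rerun the proof of Lemma~\ref{lem:global-sum}, keeping only the patterns of length at most $L$, and then bound the surviving terms of the sum in \eqref{eq:global-sum} explicitly. Since the length of the pattern of $\alpha$ equals $\card(\supp\alpha)$, the sum on the left of \eqref{eq:small-support}, raised to the power $q:=q(m,p)$, equals
\[
 \sum_{s=1}^{L}\sum_{\substack{\lambda\vdash m\\\len(\lambda)=s}}
 \sum_{\alpha\in\Acal_\lambda(n)}|a_\alpha|^q .
\]
Lemma~\ref{lem:one-pattern}, Lemma~\ref{lem:G-bound} and Lemma~\ref{lem:composition-count} then give, exactly as in the proof of Lemma~\ref{lem:global-sum},
\[
 \sum_{\card(\supp\alpha)\leq L}|a_\alpha|^q
 \leq\|P\|_p^q\sum_{s=1}^{L}s^\theta D_{s,m,p}^{\,q}\frac{s^s}{s!}\binom{m-1}{s-1}.
\]

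Next I bound each factor using $2m\leq p\leq\infty$. In this range Lemma~\ref{lem:G-bound} gives $\theta\leq1$, so $s^\theta\leq s\leq e^s$. Also $D_{s,m,p}^{\,q}=(2/\sqrt\pi)^{(s-1)q}\leq(4/\pi)^{s}$, because $q\leq2$. For the combinatorial factors, $s^s/s!\leq e^s$ and $\binom{m-1}{s-1}\leq\binom{m}{s}\leq(em/s)^s$. Hence the $s$-th term is at most $\exp\{C_0 s\log(em/s)\}$ for an absolute constant $C_0$.

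The function $s\mapsto s\log(em/s)$ is increasing on $[1,m]$, since its derivative is $\log(m/s)\geq0$. So every term is at most $\exp\{C_0L\log(em/L)\}$. There are $L$ terms, and $L\leq e^{L}\leq\exp\{L\log(em/L)\}$. Taking $q$-th roots and using $1/q\leq1$ (the exponent is nonnegative) yields \eqref{eq:small-support} with $C=C_0+1$.

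For the final assertion, set $x_m:=L_m/m$. Because $p_m/m\to\infty$, we have $2m^2/p_m=o(m)$, and therefore $L_m\leq 1+2m^2/p_m=o(m)$, that is, $x_m\to0$. Then
\[
 L_m\log\frac{em}{L_m}=m\,x_m\log\frac{e}{x_m}=o(m),
\]
since $x\log(e/x)\to0$ as $x\to0^+$. So the constant in \eqref{eq:small-support} is $\exp(o(m))$.

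There is no real obstacle here. The only care needed is to confirm that $\theta\leq1$ and $q\leq2$, which keeps the $s^\theta$ and $D^q$ factors exponential in $s$ rather than in $m$, and to check the monotonicity in $s$ that lets each term be replaced by its value at $s=L$.
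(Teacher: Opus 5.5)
Your proposal is correct and follows the paper's proof essentially step for step: restrict the pattern decomposition of Lemma~\ref{lem:global-sum} to lengths $s\leq L$, bound each summand by $\exp\{C_0s\log(em/s)\}$ using $\theta\leq1$, $q\leq2$ and elementary factorial and binomial estimates, and then use that $s\mapsto s\log(em/s)$ is increasing on $[1,m]$. The only difference is cosmetic: for the final assertion you handle all cases at once via $L_m/m\to0$ (from $L_m\leq1+2m^2/p_m$) instead of splitting according to whether $2m^2/p_m<1$, and you should add that $p_m\geq2m$ gives $L_m\leq m$, which is needed to apply the first part.
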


\begin{proof}
Fix $q=q(m,p)$ and $\theta=mq/p$. In the range $p\geq2m$, Lemma~\ref{lem:G-bound}
gives $0\leq\theta\leq1$. Repeating the proof of Lemma~\ref{lem:global-sum}
but summing only over patterns of length $1\leq s\leq L$ gives
\begin{align}
 &\sum_{\substack{|\alpha|=m\\\card(\supp\alpha)\leq L}}|a_\alpha|^q\notag\\
 &\quad\leq
 \|P\|_p^q
 \sum_{s=1}^L
 s^\theta\left(\frac2{\sqrt\pi}\right)^{q(s-1)}
 \frac{s^s}{s!}\binom{m-1}{s-1}.
 \label{eq:small-support-sum}
\end{align}
Since $q\leq2$, $s^\theta\leq s$, and
\[
 \frac{s^s}{s!}\leq e^s,
 \qquad
 \binom{m-1}{s-1}\leq\left(\frac{em}{s}\right)^{s-1},
\]
the $s$th summand in \eqref{eq:small-support-sum} is bounded by
\[
 \exp\!\left\{C_0s\log\frac{em}{s}\right\}
\]
for an absolute constant $C_0>0$. The function
$x\mapsto x\log(em/x)$ is increasing on $[1,m]$. Hence the sum of the first
$L$ terms is at most
\[
 L\exp\!\left\{C_0L\log\frac{em}{L}\right\}
 \leq
 \exp\!\left\{C_1L\log\frac{em}{L}\right\}
\]
with another absolute constant $C_1$. Since $q>1$, taking the $q$th root and
increasing the constant once more proves \eqref{eq:small-support}.

For the last assertion put $\tau_m:=2m/p_m$. Then $\tau_m\to0$ and
$L_m=\max\{1,\lceil\tau_m m\rceil\}$. If $\tau_m m<1$, then
\[
 L_m\log\frac{em}{L_m}=\log(em)=O(\log m)=o(m).
\]
If $\tau_m m\geq1$, then $L_m=\tau_m m+O(1)$ and
\[
 L_m\log\frac{em}{L_m}
 =m\tau_m\log\frac e{\tau_m}+O(\log m)=o(m),
\]
because $t\log(e/t)\to0$ as $t\downarrow0$. This proves the assertion in
all cases.
\end{proof}

\section{The multiplicity-pattern estimate}\label{sec:A}

Theorem~\ref{thm:A}\textup{(i)} is Proposition~\ref{prop:pattern-global} below.

\begin{proposition}\label{prop:pattern-global}
For every $m\geq2$ and $m<p\leq\infty$,
\begin{equation}\label{eq:complex-pattern-finite}
 m^{m/p}\leq\Hpol_{m,p}(\C)
 \leq m^{m/p}
 \left[e\left(1+\frac{4e}{\pi}\right)^{m-1}\right]^{1/q(m,p)}.
\end{equation}
\end{proposition}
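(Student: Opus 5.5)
The lower bound comes from testing the inequality on a single product monomial. The upper bound comes from evaluating the global sum of Lemma~\ref{lem:global-sum}: we factor out $m^{\theta}$ and compare the remaining sum with a binomial expansion.

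\emph{Lower bound.} Take $n=m$ and $P(z)=z_1z_2\cdots z_m$. Its only nonzero coefficient equals $1$, so $|P|_{q(m,p)}=1$ for every $q$.

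For $p<\infty$, maximize $\prod_j|z_j|$ subject to $\sum_j|z_j|^p\leq1$. By the AM--GM inequality applied to $|z_j|^p$, we get $\prod_j|z_j|^p\leq m^{-m}$. Equality holds at $|z_j|=m^{-1/p}$, so $\|P\|_p=m^{-m/p}$. Hence $\Hpol_{m,p}(\C)\geq |P|_q/\|P\|_p=m^{m/p}$. For $p=\infty$ the bound reads $\Hpol_{m,\infty}(\C)\geq1$, which is trivial.

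\emph{Upper bound.} Start from \eqref{eq:global-sum} with $q=q(m,p)$ and $\theta=mq/p$. Since $1\leq s\leq m$ and $\theta\geq0$, we have $s^\theta\leq m^\theta=m^{mq/p}$. This factor leaves the sum and, after taking $q$th roots, gives exactly the prefactor $m^{m/p}$. It remains to show
\[
 \sum_{s=1}^m D_{s,m,p}^{\,q}\frac{s^s}{s!}\binom{m-1}{s-1}\leq e\left(1+\frac{4e}{\pi}\right)^{m-1}.
\]

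First bound $D_{s,m,p}^q$ by $(4/\pi)^{s-1}$ in both ranges of \eqref{eq:D-pattern}:
\begin{itemize}[label=--]
\item for $m<p<2m$ this is an identity, since $D_{s,m,p}^q=(2/\sqrt\pi)^{2(s-1)}$;
\item for $2m\leq p\leq\infty$ we have $q\leq2$, so $(2/\sqrt\pi)^{q(s-1)}\leq(4/\pi)^{s-1}$, because $2/\sqrt\pi>1$.
\end{itemize}
Next use the elementary Stirling-type bound $s!\geq e(s/e)^s$. It follows, for instance, from $\log s!\geq\int_1^s\log x\,dx=s\log s-s+1$, and it gives $s^s/s!\leq e^{s-1}\leq e\cdot e^{s-1}$.

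The sum is therefore at most
\[
 e\sum_{s=1}^m\binom{m-1}{s-1}\left(\frac{4e}{\pi}\right)^{s-1}=e\left(1+\frac{4e}{\pi}\right)^{m-1}
\]
by the binomial theorem. In fact the argument works even without the leading factor $e$. Taking $q$th roots yields \eqref{eq:complex-pattern-finite}.

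None of these steps is a serious obstacle, since the main work was already done in Lemmas~\ref{lem:one-pattern}--\ref{lem:global-sum}. The only points needing care are the uniform comparison $D_{s,m,p}^q\leq(4/\pi)^{s-1}$ across both ranges of $p$ (including $p=\infty$, where $\theta=0$), and the crude bound $s^\theta\leq m^\theta$, which is what produces the sharp-looking factor $m^{m/p}$.
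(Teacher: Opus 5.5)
Your proposal is correct and follows the paper's proof essentially step by step. For the lower bound you use the same test monomial $z_1\cdots z_m$ with AM--GM. For the upper bound you use Lemma~\ref{lem:global-sum} with $s^\theta\leq m^\theta$, the uniform comparison $D_{s,m,p}^q\leq(4/\pi)^{s-1}$, the bound from $\log s!\geq s\log s-s+1$, and the binomial theorem. Your remark that this bound actually gives $s^s/s!\leq e^{s-1}$, so the leading factor $e$ can be dropped, is also correct; the paper records only the weaker $s^s/s!\leq e^s$.
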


\begin{proof}
By \eqref{eq:D-pattern},
\begin{equation}\label{eq:Dq-bound}
 D_{s,m,p}^q\leq(4/\pi)^{s-1}.
\end{equation}
There is equality in \eqref{eq:Dq-bound} for $m<p<2m$. For
$2m\leq p\leq\infty$, we have $q\leq2$ and $2/\sqrt\pi>1$, so
\[
 (2/\sqrt\pi)^{q(s-1)}\leq(2/\sqrt\pi)^{2(s-1)}.
\]
Also, for $s\geq1$,
\[
 \log(s!)=\sum_{j=1}^s\log j
 \geq\int_1^s\log x\,dx=s\log s-s+1,
 \qquad \frac{s^s}{s!}\leq e^s.
\]
Since $\theta\geq0$ and $s\leq m$, Lemma~\ref{lem:global-sum} gives
\begin{align}
 (\Hpol_{m,p}(\C))^q
 &\leq m^\theta\sum_{s=1}^m
             (4/\pi)^{s-1}\frac{s^s}{s!}\binom{m-1}{s-1}\notag\\
 &\leq e\,m^\theta\sum_{s=1}^m
             (4e/\pi)^{s-1}\binom{m-1}{s-1}\notag\\
 &=e\,m^\beta(1+4e/\pi)^{m-1}.
 \label{eq:A-summed}
\end{align}
Taking $q$th roots and using $\theta/q=m/p$ proves the upper bound.

For the lower bound, take
$P:\ell_p^m(\C)\to\C$, $P(z):=z_1\cdots z_m$. Its coefficient norm is
$|P|_q=1$. If $p<\infty$ and $\sum_{j=1}^m|z_j|^p\leq1$, the
arithmetic--geometric mean inequality gives
\[
 |P(z)|^p=\prod_{j=1}^m|z_j|^p
 \leq\left(\frac1m\sum_{j=1}^m|z_j|^p\right)^m\leq m^{-m}.
\]
Equality holds at $z_j=m^{-1/p}$ for every $j$, so
$\|P\|_p=m^{-m/p}$. For $p=\infty$, $\|P\|_\infty=1$. Thus
$\Hpol_{m,p}(\C)\geq m^{m/p}$.
\end{proof}

\section{Entropy transfer from the Bohnenblust--Hille endpoint}\label{sec:entropy-transfer}

The multiplicity-pattern decomposition estimates each pattern separately.
A normalization adapted to the full coefficient array yields the
subexponential estimate in the range $p/m\to\infty$.

Throughout this section, for an $m$-homogeneous polynomial
\[
 Q(z)=\sum_{|\alpha|=m}b_\alpha z^\alpha
\]
we write $\|Q\|_\infty$ for the supremum on the unit polydisc
$\mathbb D^n:=\{z\in\C^n:\|z\|_\infty\leq1\}$. Integrals over $\T^n$ are
with respect to normalized Haar measure. Recall the quantities $q_m,D_m,c_m$
and $\Lambda_m$ from \eqref{eq:entropy-main-parameters}. The polynomial
Bohnenblust--Hille inequality is
\begin{equation}\label{eq:BH-Dm}
 |Q|_{q_m}\leq D_m\|Q\|_\infty.
\end{equation}
By \cite[Theorem~1.1]{BayartPellegrinoSeoane2014},
\begin{equation}\label{eq:Dm-subexponential}
 \log D_m=o(m).
\end{equation}

\subsection{A derivative square function}

Define
\begin{equation}\label{eq:S-definition}
 S(Q):=\sum_{i=1}^n
 \left(\sum_{|\alpha|=m}\alpha_i^2|b_\alpha|^2\right)^{1/2}.
\end{equation}

\begin{lemma}\label{lem:derivative-entropy}
Every complex $m$-homogeneous polynomial $Q$ satisfies
\begin{equation}\label{eq:derivative-l1-pointwise}
 \sup_{z\in\mathbb D^n}\sum_{i=1}^n|\partial_iQ(z)|
 \leq m c_m\|Q\|_\infty
\end{equation}
and
\begin{equation}\label{eq:S-upper}
 S(Q)\leq m c_m2^{(m-1)/2}\|Q\|_\infty
       =\sqrt m\,\Lambda_m\|Q\|_\infty.
\end{equation}
\end{lemma}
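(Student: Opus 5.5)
The plan is to get the pointwise bound \eqref{eq:derivative-l1-pointwise} from a one-variable Cauchy estimate, which yields a Harris-type polarization bound for the mixed term $\check Q(z,\ldots,z,w)$. Then \eqref{eq:S-upper} follows from Parseval, an $L^1$–$L^2$ hypercontractive comparison for each $\partial_iQ$, and integration of \eqref{eq:derivative-l1-pointwise} over $\T^n$.

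\emph{Step 1 (pointwise bound).} Fix $z\in\mathbb D^n$. By complex $\ell_\infty$ duality, as in the proof of Lemma~\ref{lem:mixed-source},
\[
 \sum_{i=1}^n|\partial_iQ(z)|=\sup_{\|w\|_\infty\leq1}\Bigl|\sum_{i=1}^n w_i\partial_iQ(z)\Bigr|=\sup_{\|w\|_\infty\leq1} m\,|\check Q(z,\ldots,z,w)|,
\]
where $\check Q$ is the symmetric $m$-linear form of $Q$. For $r>0$ and $\omega\in\T$, the point $(z+r\omega w)/(1+r)$ lies in $\mathbb D^n$. The binomial expansion gives
\[
 Q\Bigl(\tfrac{z+r\omega w}{1+r}\Bigr)=(1+r)^{-m}\sum_{k=0}^m\binom mk r^k\omega^k\,\check Q(z^{m-k},w^k).
\]
Multiplying by $\overline\omega$ and integrating over $\T$, using \eqref{eq:character-integral}, isolates the $k=1$ term:
\[
 m\,r(1+r)^{-m}|\check Q(z^{m-1},w)|\leq\|Q\|_\infty .
\]
Choosing $r=1/(m-1)$ minimizes $(1+r)^m/r$, whose minimum value is $(m-1)(m/(m-1))^m=m\,c_m$. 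This gives
\[
 m|\check Q(z^{m-1},w)|\leq m\,c_m\|Q\|_\infty,
\]
and hence \eqref{eq:derivative-l1-pointwise}.

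\emph{Step 2 (Parseval).} For each $i$ we have $z_i\partial_iQ(z)=\sum_{|\alpha|=m}\alpha_ib_\alpha z^\alpha$. Orthonormality of characters on $\T^n$ and $|z_i|=1$ on $\T^n$ then give
\[
 \Bigl(\sum_{|\alpha|=m}\alpha_i^2|b_\alpha|^2\Bigr)^{1/2}=\|\partial_iQ\|_{L^2(\T^n)}.
\]
Since $\partial_iQ$ is $(m-1)$-homogeneous, Weissler's hypercontractive inequality applies. On $\T$ the Poisson operator $P_\rho$ is a contraction $L^1\to L^2$ for $\rho\leq1/\sqrt2$; this tensorizes to $\T^n$, and on $d$-homogeneous polynomials $P_\rho$ acts as multiplication by $\rho^d$. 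Therefore
\[
 \|f\|_{L^2(\T^n)}\leq 2^{d/2}\|f\|_{L^1(\T^n)}
\]
for every $d$-homogeneous $f$; we apply it with $d=m-1$. I expect this to be the only point needing outside input: one has to make sure that the cited Steinhaus hypercontractive constant is exactly $\sqrt2$ per degree, uniformly in $n$. If a self-contained route is preferred, the alternative is to iterate the one-variable $L^1$–$L^2$ Khintchine-type inequality degree by degree, but the hypercontractive citation is cleaner.

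\emph{Step 3 (summation).} Summing over $i$, applying Fubini, and then the pointwise bound from Step 1 on $\T^n\subseteq\mathbb D^n$:
\[
 S(Q)\leq 2^{(m-1)/2}\int_{\T^n}\sum_{i=1}^n|\partial_iQ|\,d\mathrm m_n\leq 2^{(m-1)/2}m\,c_m\|Q\|_\infty .
\]
The identity $m\,c_m2^{(m-1)/2}=\sqrt m\,\Lambda_m$ is immediate from \eqref{eq:entropy-main-parameters}, which completes \eqref{eq:S-upper}.
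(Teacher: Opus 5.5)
Your proposal is correct and matches the paper's proof: with $r=1/(m-1)$ your point $(z+r\omega w)/(1+r)$ is exactly the paper's $\gamma_\theta=\frac1m e^{i\theta}w+\frac{m-1}{m}z$, and Steps 2--3 are the same Parseval, $L^1$--$L^2$ hypercontractive comparison (the paper cites Bayart's theorem) and Fubini argument. One wording fix: Weissler's contraction for $\rho\le 1/\sqrt2$ holds from $H^1$ to $H^2$ (analytic functions), not on all of $L^1(\T)$. Since analytic polynomials are analytic in each variable separately, the restricted statement is all your tensorization needs.
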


\begin{proof}
Fix $z,w\in\mathbb D^n$. For $\theta\in[0,2\pi)$ put
\[
 \gamma_\theta:=\frac1m e^{i\theta}w+\frac{m-1}{m}z.
\]
The triangle inequality gives $\|\gamma_\theta\|_\infty\leq1$. Hence the
trigonometric polynomial $\theta\mapsto Q(\gamma_\theta)$ has modulus at most
$\|Q\|_\infty$. We compute its first Fourier coefficient. Taylor expansion of
$Q$ at $((m-1)/m)z$ in the direction $(e^{i\theta}/m)w$ shows that the
coefficient of $e^{i\theta}$ is
\begin{equation}\label{eq:first-Fourier-derivative}
 \frac1m\left(\frac{m-1}{m}\right)^{m-1}
 \sum_{i=1}^n w_i\partial_iQ(z).
\end{equation}
Indeed, each $\partial_iQ$ is $(m-1)$-homogeneous, which produces the factor
$((m-1)/m)^{m-1}$. Fourier coefficient extraction and
$|Q(\gamma_\theta)|\leq\|Q\|_\infty$ therefore imply
\[
 \left|\sum_{i=1}^n w_i\partial_iQ(z)\right|
 \leq m\left(\frac m{m-1}\right)^{m-1}\|Q\|_\infty
 =mc_m\|Q\|_\infty.
\]
For the fixed point $z$, choose $|w_i|=1$ so that every nonzero number
$w_i\partial_iQ(z)$ has the same argument. Then the left side is
$\sum_i|\partial_iQ(z)|$. This proves \eqref{eq:derivative-l1-pointwise}.

The polynomial Khintchine estimate
\begin{equation}\label{eq:polynomial-khintchine}
 \|f\|_{L^2(\T^n)}\leq2^{d/2}\|f\|_{L^1(\T^n)}
\end{equation}
for every complex $d$-homogeneous analytic polynomial $f$; see
\cite[Lemma~5.1]{BayartPellegrinoSeoane2014}, ultimately based on
\cite[Theorem~9]{Bayart2002}. Since $\partial_iQ$ is $(m-1)$-homogeneous,
Parseval's identity gives
\[
 \|\partial_iQ\|_{L^2(\T^n)}^2
 =\sum_{|\alpha|=m}\alpha_i^2|b_\alpha|^2.
\]
Consequently, applying \eqref{eq:polynomial-khintchine} to every derivative,
summing in $i$, using Fubini, and then applying
\eqref{eq:derivative-l1-pointwise},
\begin{align*}
 S(Q)
 &=\sum_{i=1}^n\|\partial_iQ\|_{L^2(\T^n)}\\
 &\leq2^{(m-1)/2}\sum_{i=1}^n
       \|\partial_iQ\|_{L^1(\T^n)}\\
 &=2^{(m-1)/2}\int_{\T^n}\sum_{i=1}^n
       |\partial_iQ(z)|\,d\mathrm{m}_n(z)\\
 &\leq mc_m2^{(m-1)/2}\|Q\|_\infty.
\end{align*}
This is \eqref{eq:S-upper}.
\end{proof}

\subsection{Coefficient-dependent rescaling}

For finite weights $\rho_j\geq0$ with $\sum_j\rho_j=1$, put
\begin{equation}\label{eq:entropy-definition}
 H(\rho):=-\sum_j\rho_j\log\rho_j,
\end{equation}
with the convention $0\log0=0$. The inequality
\begin{equation}\label{eq:log-sum-jensen}
 \log\left(\sum_j u_j\right)
 \geq\sum_jv_j\log\frac{u_j}{v_j},
 \qquad u_j\geq0,\quad v_j\geq0,\quad\sum_jv_j=1,
\end{equation}
which is Jensen's inequality for the concave function $\log$; zero-weight
terms are omitted.

\begin{lemma}\label{lem:coefficient-transfer}
Let $m\geq2$, $2m\leq p<\infty$, and define
\begin{equation}\label{eq:upper-range-q-tau}
 \frac1q:=\frac1{q_m}-\frac1p,
 \qquad \tau:=\frac{2m}{p}.
\end{equation}
Let
\[
 P(z)=\sum_{|\alpha|=m}a_\alpha z^\alpha\neq0,
 \qquad A:=|P|_q,
\]
and define
\begin{equation}\label{eq:mu-t-Q}
 \mu_\alpha:=\frac{|a_\alpha|^q}{A^q},\qquad
 t_i:=\frac1m\sum_{|\alpha|=m}\alpha_i\mu_\alpha,
 \qquad
 Q(z):=P(t_1^{1/p}z_1,\ldots,t_n^{1/p}z_n).
\end{equation}
Then $t=(t_1,\ldots,t_n)$ is a probability vector,
\begin{equation}\label{eq:Q-norm-transfer}
 \|Q\|_\infty\leq\|P\|_p,
\end{equation}
and
\begin{equation}\label{eq:numerical-transfer}
 A\leq |Q|_{q_m}^{\,1-\tau}
       \left(\frac{S(Q)}{\sqrt m}\right)^{\tau}.
\end{equation}
\end{lemma}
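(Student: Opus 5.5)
The first two assertions are direct, and the main work is \eqref{eq:numerical-transfer}, which I plan to obtain by two applications of the Jensen inequality \eqref{eq:log-sum-jensen}, one with weights $\mu$ and one with weights $t$.

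\textbf{The probability vector and \eqref{eq:Q-norm-transfer}.} Since $\sum_\alpha\mu_\alpha=1$ and $\sum_i\alpha_i=m$,
\[
 \sum_i t_i=\frac1m\sum_\alpha|\alpha|\mu_\alpha=1,
\]
and $t_i\geq0$. For $z\in\mathbb D^n$ put $w_i:=t_i^{1/p}z_i$. Then $\|w\|_p^p\leq\sum_it_i=1$, hence $|Q(z)|=|P(w)|\leq\|P\|_p$, which is \eqref{eq:Q-norm-transfer}.

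\textbf{Strategy for \eqref{eq:numerical-transfer}.} The coefficients of $Q$ are $b_\alpha=a_\alpha\prod_it_i^{\alpha_i/p}$, and $|a_\alpha|=A\mu_\alpha^{1/q}$ on $\supp\mu$. I take logarithms and aim to show
\[
 \log A\leq(1-\tau)\log|Q|_{q_m}+\tau\Bigl(\log S(Q)-\tfrac12\log m\Bigr).
\]
For the first term, apply \eqref{eq:log-sum-jensen} with $u_\alpha=|b_\alpha|^{q_m}$ and $v_\alpha=\mu_\alpha$:
\[
 \log|Q|_{q_m}\geq\log A+\Bigl(\frac1q-\frac1{q_m}\Bigr)\sum_\alpha\mu_\alpha\log\mu_\alpha+\frac1p\sum_\alpha\mu_\alpha\sum_i\alpha_i\log t_i .
\]
The last sum equals $-mH(t)$, because $\sum_\alpha\mu_\alpha\alpha_i=mt_i$. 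Since $1/q-1/q_m=-1/p$, this reads
\[
 \log|Q|_{q_m}\geq\log A+\frac1p\bigl(H(\mu)-mH(t)\bigr).
\]

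For $S(Q)$, apply \eqref{eq:log-sum-jensen} twice. The outer application uses weights $t_i$ on $u_i=\bigl(\sum_\alpha\alpha_i^2|b_\alpha|^2\bigr)^{1/2}$, which produces the term $+H(t)$. The inner application, for each $i$, uses the conditional weights $\nu^{(i)}_\alpha:=\alpha_i\mu_\alpha/(mt_i)$, a probability vector in $\alpha$ by the definition of $t_i$. Regrouping gives
\[
 \log S\geq H(t)+\sum_\alpha\mu_\alpha\Bigl[\log|b_\alpha|-\tfrac12\log\mu_\alpha+\tfrac12\sum_i\tfrac{\alpha_i}{m}\bigl(\log\alpha_i+\log m+\log t_i\bigr)\Bigr].
\]
Drop the nonnegative terms $\alpha_i\log\alpha_i$, use $\sum_i\alpha_i/m=1$ to extract $\tfrac12\log m$, and again use $\sum_\alpha\mu_\alpha\sum_i\alpha_i\log t_i=-mH(t)$. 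The result is
\[
 \log S-\tfrac12\log m\geq\log A+\Bigl(\frac12-\frac1q\Bigr)H(\mu)+\Bigl(\frac12-\frac mp\Bigr)H(t).
\]
This uses $\sum_\alpha\mu_\alpha\log|b_\alpha|=\log A-\tfrac1qH(\mu)-\tfrac mpH(t)$ and $H(t)-\tfrac12H(t)=\tfrac12H(t)$.

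\textbf{Combining, and the expected obstacle.} Multiply the first bound by $1-\tau$ and the second by $\tau$, then add. Everything reduces to a linear inequality in $H(\mu)$ and $H(t)$, and the claim follows once that linear combination is nonnegative. The coefficient of $H(t)$ is $-(1-\tau)m/p+\tau(1/2-m/p)$. With $\tau=2m/p$ one has $\tau/2=m/p$, so this coefficient is $-m/p+\tau m/p-\tau m/p+m/p=0$. The coefficient of $H(\mu)$ is $(1-\tau)/p+\tau(1/2-1/q)$. Substituting $1/q=(m+1)/(2m)-1/p$ and $\tau=2m/p$ gives $\tau(1/2-1/q)=-1/p+2m/p^2$. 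Hence the coefficient of $H(\mu)$ is $(1-\tau)/p-1/p+2m/p^2=0$.

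Thus both entropy terms cancel exactly, and $\tau=2m/p$ is precisely the choice that produces this cancellation. The main obstacle I expect is this bookkeeping. In particular, one must check that the conditional weights $\nu^{(i)}$ are legitimate when some $t_i=0$ (those indices are simply omitted) and track signs carefully. If the cancellation left a residual multiple of $H(\mu)-mH(t)$, I would fall back on the subadditivity bound $H(\mu)\leq mH(t)+\log$ of a multinomial factor, but I expect it to be unnecessary.
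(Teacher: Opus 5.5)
Your proposal is correct and follows the paper's proof essentially step for step. It uses Jensen with weights $\mu_\alpha$ for $|Q|_{q_m}$, and a double Jensen for $S(Q)$ with outer weights $t_i$ and conditional weights $\alpha_i\mu_\alpha/(mt_i)$. It then drops the nonnegative $\alpha_i\log\alpha_i$ terms and uses $\tau=2m/p$ to cancel the entropies. The only difference is cosmetic: the paper groups the entropies as $I=mH(t)-H(\mu)$ and checks one cancellation identity, while you check the coefficients of $H(t)$ and $H(\mu)$ separately. So the subadditivity fallback is not needed, and the combination is legitimate because $0<\tau\leq1$ when $p\geq2m$.
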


\begin{proof}
Coefficients equal to zero may be omitted. Moreover, if $t_i=0$, then
\[
 0=mt_i=\sum_{|\alpha|=m}\alpha_i\mu_\alpha,
\]
and the nonnegativity of the summands implies $\alpha_i=0$ for every
$\alpha$ with $\mu_\alpha>0$. Hence coordinates with $t_i=0$ may also be
omitted, and every logarithm that occurs has a strictly positive argument.
Since $\sum_\alpha\mu_\alpha=1$ and $|\alpha|=m$,
\[
 \sum_{i=1}^nt_i
 =\frac1m\sum_\alpha\mu_\alpha\sum_{i=1}^n\alpha_i=1.
\]
For $z\in\mathbb D^n$,
\[
 \sum_{i=1}^n|t_i^{1/p}z_i|^p
 =\sum_{i=1}^nt_i|z_i|^p\leq\sum_{i=1}^nt_i=1,
\]
which proves \eqref{eq:Q-norm-transfer}.

Write
\begin{equation}\label{eq:I-definition}
 I:=mH(t)-H(\mu),\qquad
 b_\alpha:=a_\alpha t^{\alpha/p},\qquad
 t^{\alpha/p}:=\prod_{i:\alpha_i>0}t_i^{\alpha_i/p}.
\end{equation}

First, from \eqref{eq:upper-range-q-tau},
\begin{equation}\label{eq:q0q-identity}
 \frac{q_m}{q}=1-\frac{q_m}{p}.
\end{equation}
Therefore
\begin{align*}
 \left(\frac{|Q|_{q_m}}{A}\right)^{q_m}
 &=\sum_\alpha
   \mu_\alpha^{q_m/q}t^{q_m\alpha/p}\\
 &=\sum_\alpha\mu_\alpha
   \exp\!\left\{
   \frac{q_m}{p}
   \left(\sum_{i=1}^n\alpha_i\log t_i-\log\mu_\alpha\right)
   \right\}.
\end{align*}
Jensen's inequality yields
\begin{align*}
 \log\frac{|Q|_{q_m}}{A}
 &\geq\frac1p
 \left(
 \sum_{\alpha,i}\mu_\alpha\alpha_i\log t_i
 -\sum_\alpha\mu_\alpha\log\mu_\alpha
 \right)\\
 &=\frac1p\left(
 m\sum_i t_i\log t_i+H(\mu)\right)
 =-\frac Ip.
\end{align*}
Hence
\begin{equation}\label{eq:Q-q0-lower}
 |Q|_{q_m}\geq A e^{-I/p}.
\end{equation}

For the second inequality define normalized weights on pairs $(i,\alpha)$ by
\begin{equation}\label{eq:joint-nu}
 \nu(i,\alpha):=\frac{\alpha_i\mu_\alpha}{m}.
\end{equation}
Their two marginal sums are $t$ and $\mu$:
\[
 \sum_\alpha\nu(i,\alpha)=t_i,
 \qquad
 \sum_i\nu(i,\alpha)=\mu_\alpha.
\]
For $t_i>0$ put
\begin{equation}\label{eq:conditional-beta}
 \beta_{\alpha\mid i}:=\frac{\nu(i,\alpha)}{t_i};
\end{equation}
then $\sum_\alpha\beta_{\alpha\mid i}=1$. Set
\begin{equation}\label{eq:si-definition}
 s_i:=\left(\sum_\alpha
       \alpha_i^2|a_\alpha|^2t^{2\alpha/p}\right)^{1/2},
 \qquad S(Q)=\sum_i s_i.
\end{equation}
Apply \eqref{eq:log-sum-jensen} first to the outer sum with weights $t_i$,
and then to $s_i^2/A^2$ with the conditional weights
$\beta_{\alpha\mid i}$, giving
\begin{align}
 \log\frac{S(Q)}A
 &\geq H(t)+\sum_i t_i\log\frac{s_i}{A}\notag\\
 &\geq H(t)+\frac12\sum_{i,\alpha}\nu(i,\alpha)
 \log\left(
 \frac{\alpha_i^2\mu_\alpha^{2/q}t^{2\alpha/p}}
      {\beta_{\alpha\mid i}}
 \right).
 \label{eq:S-Jensen}
\end{align}
Because
\[
 \beta_{\alpha\mid i}
 =\frac{\alpha_i\mu_\alpha}{m t_i},
\]
the logarithm in \eqref{eq:S-Jensen} is exactly
\begin{equation}\label{eq:S-log-expanded}
 \log m+\log t_i+\log\alpha_i
 +\left(\frac2q-1\right)\log\mu_\alpha
 +\frac2p\sum_j\alpha_j\log t_j.
\end{equation}
Averaging each term with respect to $\nu$ and using its two marginals gives
\begin{align}
 \log\frac{S(Q)}A
 &\geq\frac12\log m+\frac12\mathbb E_\nu\log\alpha_i
 +\left(\frac12-\frac mp\right)H(t)
 -\left(\frac1q-\frac12\right)H(\mu).
 \label{eq:S-entropy-expanded}
\end{align}
From \eqref{eq:upper-range-q-tau},
\[
 \frac1q-\frac12=\frac1{2m}-\frac1p,
 \qquad
 \frac12-\frac mp=m\left(\frac1{2m}-\frac1p\right).
\]
Thus \eqref{eq:S-entropy-expanded} becomes
\begin{equation}\label{eq:S-I-exact}
 \log\frac{S(Q)}A
 \geq\frac12\log m+\frac12\mathbb E_\nu\log\alpha_i
 +\left(\frac1{2m}-\frac1p\right)I.
\end{equation}
Whenever $\nu(i,\alpha)>0$ we have $\alpha_i\geq1$, so
$\mathbb E_\nu\log\alpha_i\geq0$. Therefore
\begin{equation}\label{eq:S-lower}
 \frac{S(Q)}{\sqrt m}
 \geq A\exp\!\left\{\left(\frac1{2m}-\frac1p\right)I\right\}.
\end{equation}

Raise \eqref{eq:Q-q0-lower} to the power $1-\tau$ and
\eqref{eq:S-lower} to the power $\tau$. The factors involving $I$ cancel because
\begin{equation}\label{eq:entropy-cancellation}
 -\frac{1-\tau}{p}
 +\tau\left(\frac1{2m}-\frac1p\right)=0,
 \qquad \tau=\frac{2m}{p}.
\end{equation}
Multiplication gives \eqref{eq:numerical-transfer}. 
\end{proof}

\subsection{The upper Hardy--Littlewood range}

\begin{theorem}\label{thm:entropy-transfer}
Let $m\geq2$ and $2m\leq p\leq\infty$. Put
\[
 \frac1q=\frac1{q_m}-\frac1p,
 \qquad \tau=\frac{2m}{p}.
\]
Then every $P\in\Pol_m(\C^n)$ satisfies
\begin{equation}\label{eq:entropy-transfer}
 |P|_q\leq D_m^{\,1-\tau}\Lambda_m^{\,\tau}\|P\|_p.
\end{equation}
Consequently,
\begin{equation}\label{eq:entropy-H-bound}
 \Hpol_{m,p}(\C)\leq D_m^{\,1-2m/p}\Lambda_m^{\,2m/p}.
\end{equation}
\end{theorem}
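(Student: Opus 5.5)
The proof combines Lemma~\ref{lem:coefficient-transfer} with the endpoint estimates \eqref{eq:BH-Dm} and Lemma~\ref{lem:derivative-entropy}. We treat the case $p=\infty$ separately. There $\tau=0$ and $1/q=1/q_m$, so $q=q_m=q(m,\infty)$, and \eqref{eq:entropy-transfer} reads $|P|_{q_m}\leq D_m\|P\|_\infty$. This is exactly \eqref{eq:BH-Dm}, the definition of $D_m$.

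For $2m\leq p<\infty$, first check that the exponent in the statement agrees with $q(m,p)$:
\[
 \frac1{q_m}-\frac1p=\frac{m+1}{2m}-\frac1p=\frac{mp+p-2m}{2mp}.
\]
This is the reciprocal of the value in \eqref{eq:HL-exponent}, so the statement concerns the Hardy--Littlewood exponent itself. If $P=0$ there is nothing to prove. Otherwise set $A=|P|_q$ and form the rescaled polynomial $Q$ of \eqref{eq:mu-t-Q}. Lemma~\ref{lem:coefficient-transfer} gives $\|Q\|_\infty\leq\|P\|_p$ together with
\[
 A\leq |Q|_{q_m}^{\,1-\tau}\bigl(S(Q)/\sqrt m\bigr)^{\tau}.
\]
We bound the two factors separately. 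Since $Q$ is again $m$-homogeneous, \eqref{eq:BH-Dm} gives $|Q|_{q_m}\leq D_m\|Q\|_\infty$. For the second factor, \eqref{eq:S-upper} gives $S(Q)/\sqrt m\leq\Lambda_m\|Q\|_\infty$. Because $0\leq\tau\leq1$, both exponents $1-\tau$ and $\tau$ are nonnegative, so these bounds may be raised to those powers. This yields
\[
 |P|_q=A\leq D_m^{1-\tau}\Lambda_m^{\tau}\|Q\|_\infty\leq D_m^{1-\tau}\Lambda_m^{\tau}\|P\|_p,
\]
which is \eqref{eq:entropy-transfer}.

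The constant is independent of $n$. Taking the supremum over all $n$ and all $P$ therefore gives \eqref{eq:entropy-H-bound}, by the definition of $\Hpol_{m,p}(\C)$ as the least admissible constant. All the substantive work, namely the entropy cancellation, is already contained in Lemma~\ref{lem:coefficient-transfer}. The only point that needs care here is that $\|Q\|_\infty$ denotes the polydisc supremum, which is the norm to which both \eqref{eq:BH-Dm} and \eqref{eq:S-upper} apply, and that \eqref{eq:Q-norm-transfer} links this norm back to $\|P\|_p$.
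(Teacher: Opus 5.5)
Your proposal is correct and follows the paper's proof: for $2m\leq p<\infty$ you apply Lemma~\ref{lem:coefficient-transfer}, bound the two factors by \eqref{eq:BH-Dm} and \eqref{eq:S-upper}, and return to $\|P\|_p$ via \eqref{eq:Q-norm-transfer}; at $p=\infty$ the estimate is the Bohnenblust--Hille inequality itself. Your check that $\frac1{q_m}-\frac1p$ is the reciprocal of $q(m,p)$, and your remark that $0\leq\tau\leq1$ is needed to raise the bounds to powers, are small additions the paper leaves implicit.
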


\begin{proof}
Assume first that $2m\leq p<\infty$ and $P\neq0$. Apply
Lemma~\ref{lem:coefficient-transfer} and then use
\eqref{eq:BH-Dm}, \eqref{eq:S-upper}, and \eqref{eq:Q-norm-transfer} for the
same polynomial $Q$:
\begin{align*}
 |P|_q
 &\leq |Q|_{q_m}^{1-\tau}
       \left(\frac{S(Q)}{\sqrt m}\right)^\tau\\
 &\leq(D_m\|Q\|_\infty)^{1-\tau}
       (\Lambda_m\|Q\|_\infty)^\tau\\
 &\leq D_m^{1-\tau}\Lambda_m^\tau\|P\|_p.
\end{align*}
For $P=0$ the inequality is trivial. At $p=\infty$ we have $\tau=0$ and
\eqref{eq:entropy-transfer} is exactly the polynomial Bohnenblust--Hille
inequality \eqref{eq:BH-Dm}.
\end{proof}

Since $c_m\leq e$ and
$D_m\geq1$, \eqref{eq:entropy-H-bound} implies
\begin{equation}\label{eq:entropy-quantitative}
 \log\Hpol_{m,p}(\C)
 \leq\log D_m
 +\frac{m(m-1)}p\log2
 +\frac mp\log m
 +\frac{2m}p,
 \qquad 2m\leq p\leq\infty.
\end{equation}
Indeed, replacing $(1-2m/p)\log D_m$ by $\log D_m$ only enlarges the right
side, while $(2m/p)\log c_m\leq2m/p$.

\subsection{The lower Hardy--Littlewood range}

For $m<p\leq2m$, the derivative estimate yields:

\begin{theorem}\label{thm:lower-derivative}
For $m\geq2$, $m<p\leq2m$, and $q=p/(p-m)$,
\begin{equation}\label{eq:lower-derivative-H}
 \Hpol_{m,p}(\C)\leq c_m m^{m/p}2^{(m-1)/q}.
\end{equation}
\end{theorem}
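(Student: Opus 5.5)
The plan is to adapt the rescaling argument of Lemma~\ref{lem:coefficient-transfer} to the range $m<p\leq2m$, using an $\ell_q$ analogue of the square function $S(Q)$.

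\textbf{Setup.} Let $P\neq0$, $A:=|P|_q$, and define $\mu_\alpha$, $t_i$ and $Q(z)=P(t_1^{1/p}z_1,\ldots,t_n^{1/p}z_n)$ exactly as in \eqref{eq:mu-t-Q}. The proof of Lemma~\ref{lem:coefficient-transfer} that $t$ is a probability vector and that $\|Q\|_\infty\leq\|P\|_p$ uses nothing about the range of $p$, so it carries over verbatim. Write $b_\alpha=a_\alpha t^{\alpha/p}$ for the coefficients of $Q$, and introduce
\[
 S_q(Q):=\sum_{i=1}^n\left(\sum_{|\alpha|=m}\alpha_i^q|b_\alpha|^q\right)^{1/q}.
\]

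\textbf{Step 1: an upper bound for $S_q(Q)$.} I will show
\[
 S_q(Q)\leq 2^{(m-1)/q}\,m c_m\|Q\|_\infty .
\]
The quantity $(\sum_\alpha\alpha_i^q|b_\alpha|^q)^{1/q}$ is the $\ell_q$ norm of the coefficients of the $(m-1)$-homogeneous polynomial $\partial_iQ$.
\begin{itemize}
\item For any analytic polynomial $f$, each coefficient is bounded by $\|f\|_{L^1(\T^n)}$.
\item By \eqref{eq:polynomial-khintchine}, the $\ell_2$ norm of the coefficients of $f$ is at most $2^{(m-1)/2}\|f\|_{L^1(\T^n)}$ when $f$ is $(m-1)$-homogeneous.
\item Since $q\geq2$, the elementary bound $\|c\|_q\leq\|c\|_\infty^{1-2/q}\|c\|_2^{2/q}$ then gives $\|\widehat{\partial_iQ}\|_{\ell_q}\leq2^{(m-1)/q}\|\partial_iQ\|_{L^1(\T^n)}$.
\end{itemize}
Summing over $i$, using Fubini, and applying the pointwise bound \eqref{eq:derivative-l1-pointwise} yields the claim.

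\textbf{Step 2: a lower bound for $S_q(Q)$ by entropy.} I aim to prove
\[
 A\leq m^{-1/q}S_q(Q).
\]
With $\nu(i,\alpha)=\alpha_i\mu_\alpha/m$ and conditional weights $\beta_{\alpha\mid i}=\alpha_i\mu_\alpha/(mt_i)$, apply \eqref{eq:log-sum-jensen} twice, as in \eqref{eq:S-Jensen}:
\[
 \log\frac{S_q(Q)}A\geq H(t)+\frac1q\sum_{i,\alpha}\nu(i,\alpha)\log\frac{\alpha_i^q\mu_\alpha t^{q\alpha/p}}{\beta_{\alpha\mid i}},
\]
where $|b_\alpha|^q/A^q=\mu_\alpha t^{q\alpha/p}$. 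The key point is that $\mu_\alpha$ cancels inside the logarithm, so $H(\mu)$ does not appear. The logarithm equals
\[
 \log m+\log t_i+(q-1)\log\alpha_i+\frac qp\sum_j\alpha_j\log t_j .
\]
Averaging against $\nu$ and using its marginals $t$ and $\mu$ gives
\[
 \log\frac{S_q(Q)}A\geq\frac1q\log m+\Bigl(1-\frac1q-\frac mp\Bigr)H(t)+\frac{q-1}q\,\mathbb E_\nu\log\alpha_i .
\]
The coefficient of $H(t)$ vanishes exactly when $q=p/(p-m)$, and the last term is nonnegative because $\alpha_i\geq1$ on the support of $\nu$.

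\textbf{Step 3: combine.} Steps 1 and 2 together with $\|Q\|_\infty\leq\|P\|_p$ give
\[
 |P|_q\leq m^{1-1/q}c_m2^{(m-1)/q}\|P\|_p .
\]
Since $1-1/q=m/p$, this is \eqref{eq:lower-derivative-H}.

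\textbf{Expected obstacle.} The main difficulty is the entropy bookkeeping in Step 2. One must check that $H(\mu)$ cancels and that the $H(t)$ coefficient is exactly zero. This is what allows the $\ell_q$ square function, rather than a mixture of $|Q|_{q_m}$ and $S(Q)$ as in the upper range, to close the argument. One must also omit zero coefficients and coordinates with $t_i=0$, exactly as in Lemma~\ref{lem:coefficient-transfer}, so that all logarithms are finite.
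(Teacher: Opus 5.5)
Your proposal is correct and follows the paper's proof essentially line by line. You use the same rescaled polynomial $Q$ and the same $\ell_q$ derivative functional (your $S_q(Q)$ is the paper's $F_q(Q)$). The double Jensen step with vanishing $H(t)$ coefficient is the same, as is the upper bound via $\ell_\infty$/$\ell_2$ interpolation, polynomial Khintchine and the pointwise derivative estimate. The only cosmetic difference is that you discard $\mathbb E_\nu\log\alpha_i\geq0$ directly, whereas the paper rewrites this term as $\log m-h$ and uses $h\leq\log m$, which is the same inequality.
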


\begin{proof}
Let $P\neq0$, put $A:=|P|_q$ and
\[
 \mu_\alpha:=\frac{|a_\alpha|^q}{A^q},\qquad
 t_i:=\frac1m\sum_\alpha\alpha_i\mu_\alpha,
 \qquad
 Q(z):=P(t_1^{1/p}z_1,\ldots,t_n^{1/p}z_n).
\]
As in Lemma~\ref{lem:coefficient-transfer}, $\sum_i t_i=1$ and
$\|Q\|_\infty\leq\|P\|_p$. Write
$b_\alpha=a_\alpha t^{\alpha/p}$ and define the normalized joint weights
$\nu(i,\alpha)=\alpha_i\mu_\alpha/m$ and the conditional weights
$\beta_{\alpha\mid i}=\nu(i,\alpha)/t_i$ whenever $t_i>0$.

Set
\begin{equation}\label{eq:Fq-definition}
 F_q(Q):=\sum_i
 \left(\sum_\alpha\alpha_i^q|b_\alpha|^q\right)^{1/q}.
\end{equation}
For the normalized vector
$\alpha/m=(\alpha_1/m,\ldots,\alpha_n/m)$, define
\[
 H(\alpha/m):=-\sum_{i:\alpha_i>0}
 \frac{\alpha_i}{m}\log\frac{\alpha_i}{m},
 \qquad
 h:=\sum_\alpha\mu_\alpha H(\alpha/m).
\]
Each $\alpha/m$ has at most $m$ nonzero entries, hence
\begin{equation}\label{eq:h-logm}
 0\leq h\leq\log m.
\end{equation}
Apply \eqref{eq:log-sum-jensen} first to the outer sum defining $F_q(Q)$,
with weights $t_i$, and then to each inner $q$th power, with conditional
weights $\beta_{\alpha\mid i}$. This gives
\begin{align}
 \log\frac{F_q(Q)}A
 &\geq H(t)+\frac1q\sum_{i,\alpha}\nu(i,\alpha)
 \log\left(
 \frac{\alpha_i^q\mu_\alpha t^{q\alpha/p}}
      {\beta_{\alpha\mid i}}
 \right).
 \label{eq:Fq-first-Jensen}
\end{align}
Since $\beta_{\alpha\mid i}=\alpha_i\mu_\alpha/(mt_i)$, the logarithm in
\eqref{eq:Fq-first-Jensen} is
\begin{equation}\label{eq:Fq-log-expanded}
 (q-1)\log\alpha_i+\log m+\log t_i
 +\frac q p\sum_j\alpha_j\log t_j.
\end{equation}
Now use the two marginals of $\nu$. They give
\[
 \sum_{i,\alpha}\nu(i,\alpha)\log t_i
 =\sum_i t_i\log t_i=-H(t)
\]
and
\[
 \sum_{i,\alpha}\nu(i,\alpha)
 \sum_j\alpha_j\log t_j
 =\sum_\alpha\mu_\alpha\sum_j\alpha_j\log t_j
 =m\sum_jt_j\log t_j=-mH(t).
\]
Substitution into \eqref{eq:Fq-first-Jensen} yields
\begin{align}
 \log\frac{F_q(Q)}A
 &\geq\left(1-\frac1q-\frac mp\right)H(t)
   +\frac1q\log m
   +\left(1-\frac1q\right)\mathbb E_\nu\log\alpha_i.
 \label{eq:Fq-lower-intermediate}
\end{align}
Since $1-1/q=m/p$, the coefficient of $H(t)$ vanishes.
Moreover,
\begin{align*}
 \mathbb E_\nu\log\alpha_i
 &=\sum_\alpha\mu_\alpha
   \sum_{i:\alpha_i>0}\frac{\alpha_i}{m}\log\alpha_i\\
 &=\log m-h.
\end{align*}
Thus
\begin{equation}\label{eq:Fq-lower}
 \log\frac{F_q(Q)}A
 \geq\log m-\frac mp h
 \geq\left(1-\frac mp\right)\log m,
\end{equation}
or equivalently
\begin{equation}\label{eq:Fq-lower-power}
 F_q(Q)\geq A\,m^{1-m/p}.
\end{equation}

Since $q\geq2$, every finite vector
$v$ satisfies
\begin{equation}\label{eq:lq-l2-linf}
 \|v\|_q\leq\|v\|_2^{2/q}\|v\|_\infty^{1-2/q}.
\end{equation}
For the coefficient vector of $\partial_iQ$, Parseval and
\eqref{eq:polynomial-khintchine} give
\[
 \|a(\partial_iQ)\|_2
 \leq2^{(m-1)/2}\|\partial_iQ\|_{L^1(\T^n)}.
\]
Fourier coefficient extraction gives
\[
 \|a(\partial_iQ)\|_\infty
 \leq\|\partial_iQ\|_{L^1(\T^n)}.
\]
Using \eqref{eq:lq-l2-linf} and summing in $i$,
\begin{align}
 F_q(Q)
 &\leq2^{(m-1)/q}
 \int_{\T^n}\sum_i|\partial_iQ(z)|\,d\mathrm{m}_n(z)\notag\\
 &\leq m c_m2^{(m-1)/q}\|Q\|_\infty,
 \label{eq:Fq-upper}
\end{align}
where \eqref{eq:derivative-l1-pointwise} is used in the last line. Combining
\eqref{eq:Fq-lower-power} and \eqref{eq:Fq-upper} gives
\[
 A\leq c_m m^{m/p}2^{(m-1)/q}\|Q\|_\infty
 \leq c_m m^{m/p}2^{(m-1)/q}\|P\|_p.
\]
Taking the supremum over $P$ and the dimension proves
\eqref{eq:lower-derivative-H}.
\end{proof}

\subsection{Uniform consequences}

\begin{proof}[Proof of the consequences in Theorem~\ref{thm:A}]
In the lower range $m<p\leq2m$, \eqref{eq:lower-derivative-H} gives
\[
 \frac1m\log\Hpol_{m,p}(\C)
 \leq\frac{\log c_m}{m}+\frac{\log m}{p}
 +\frac{m-1}{mq(m,p)}\log2.
\]
Here $\log c_m\leq1$, $p>m$, and $1/q(m,p)\leq1/2$. Hence, uniformly in
$m<p\leq2m$,
\begin{equation}\label{eq:lower-root-sqrt2}
 \limsup_{m\to\infty}
 \sup_{m<p\leq2m}\frac1m\log\Hpol_{m,p}(\C)
 \leq\frac12\log2.
\end{equation}
In the upper range, \eqref{eq:entropy-H-bound} gives
\[
 \log\Hpol_{m,p}(\C)
 \leq(1-\tau)\log D_m+\tau\log\Lambda_m
 \leq\max\{\log D_m,\log\Lambda_m\}.
\]
By \eqref{eq:Dm-subexponential} and
\[
 \frac1m\log\Lambda_m
 =\frac{\log c_m}{m}+\frac{\log m}{2m}
  +\frac{m-1}{2m}\log2
 \longrightarrow\frac12\log2,
\]
the same upper limit holds uniformly for $2m\leq p\leq\infty$.
Together with \eqref{eq:lower-root-sqrt2}, this proves
\eqref{eq:complex-upper-intro}, equivalently
\begin{equation}\label{eq:complex-root-uniform}
 \limsup_{m\to\infty}\sup_{m<p\leq\infty}
 \frac1m\log\Hpol_{m,p}(\C)\leq\frac12\log2.
\end{equation}

Now let $R_m\to\infty$ and $p\geq mR_m$. For all sufficiently large $m$ we
are in the upper range. From \eqref{eq:entropy-quantitative},
\begin{align*}
 0\leq\frac1m\log\Hpol_{m,p}(\C)
 &\leq\frac{\log D_m}{m}
 +\frac{m-1}{p}\log2
 +\frac{\log m}{p}+\frac2p\\
 &\leq\frac{\log D_m}{m}
 +\frac{\log2}{R_m}
 +\frac{\log m}{mR_m}+\frac2{mR_m}\longrightarrow0.
\end{align*}
The lower bound $\Hpol_{m,p}(\C)\geq1$ follows from the monomial $z_1^m$.
This proves \eqref{eq:complex-far-uniform-intro}.

Assume $D_m\leq C m^b$ and
\[
 p_m\geq c\,\frac{m^2}{\log m},
\]
where $c>0$ is fixed. From \eqref{eq:entropy-H-bound},
\[
 \Hpol_{m,p_m}(\C)
 \leq D_m^{\,1-2m/p_m}\Lambda_m^{\,2m/p_m}.
\]
Since $D_m\geq1$, the exponent $1-2m/p_m$ belongs to $[0,1]$ for all
sufficiently large $m$, and hence
\[
 D_m^{\,1-2m/p_m}\leq D_m\leq C m^b.
\]
Moreover,
\[
 \Lambda_m=c_m\sqrt m\,2^{(m-1)/2},
 \qquad
 c_m=\left(\frac m{m-1}\right)^{m-1},
\]
so
\[
 \log\Lambda_m
 =\log c_m+\frac12\log m+\frac{m-1}{2}\log2.
\]
The elementary estimate
\[
 \log c_m
 =(m-1)\log\left(1+\frac1{m-1}\right)\leq1
\]
therefore gives
\begin{align*}
 \log\Hpol_{m,p_m}(\C)
 &\leq \log C+b\log m
       +\frac{2m}{p_m}\log\Lambda_m\\
 &\leq \log C+b\log m
       +\frac{2m}{p_m}
       +\frac{m}{p_m}\log m
       +\frac{m(m-1)}{p_m}\log2.
\end{align*}
The assumption on $p_m$ yields, term by term,
\[
 \frac{2m}{p_m}
 \leq\frac{2\log m}{cm}=o(1),
 \qquad
 \frac{m}{p_m}\log m
 \leq\frac{(\log m)^2}{cm}=o(1),
\]
and
\[
 \frac{m(m-1)}{p_m}\log2
 \leq\frac{\log2}{c}\left(1-\frac1m\right)\log m.
\]
Consequently,
\[
 \log\Hpol_{m,p_m}(\C)
 \leq
 \left(b+\frac{\log2}{c}+o(1)\right)\log m,
\]
and exponentiation gives
\[
 \Hpol_{m,p_m}(\C)
 \leq m^{\,b+(\log2)/c+o(1)}.
\]
This proves \eqref{eq:polynomial-region-intro}.

Suppose now that $p_m/m^2\to\infty$. Equation
\eqref{eq:entropy-H-bound} gives
\[
 \frac{\Hpol_{m,p_m}(\C)}{D_m}
 \leq
 \left(\frac{\Lambda_m}{D_m}\right)^{2m/p_m}.
\]
Since $D_m\geq1$,
\[
 \left(\frac{\Lambda_m}{D_m}\right)^{2m/p_m}
 \leq\Lambda_m^{2m/p_m}.
\]
Using \eqref{eq:entropy-main-parameters},
\[
 \log\Lambda_m
 =\log c_m+\frac12\log m+\frac{m-1}{2}\log2
 =O(m).
\]
Hence
\[
 \log\left(\Lambda_m^{2m/p_m}\right)
 =\frac{2m}{p_m}\log\Lambda_m
 =O\left(\frac{m^2}{p_m}\right)
 \longrightarrow0.
\]
Therefore
\[
 \Lambda_m^{2m/p_m}=1+o(1),
\]
and
\[
 \Hpol_{m,p_m}(\C)\leq(1+o(1))D_m,
\]
which proves \eqref{eq:BH-matching-intro}.
\end{proof}

\begin{corollary}\label{cor:phase-diagram}
Let $p_m>m$ and assume $p_m/m\to a\in[1,\infty]$. Then
\begin{equation}\label{eq:phase-diagram}
 \limsup_{m\to\infty}(\Hpol_{m,p_m}(\C))^{1/m}
 \leq
 \begin{cases}
 2^{1-1/a},&1\leq a\leq2,\\[1mm]
 2^{1/a},&2\leq a\leq\infty,
 \end{cases}
\end{equation}
where $1/\infty=0$. In particular, the two branches agree at $a=2$ and
both give $\sqrt2$.
\end{corollary}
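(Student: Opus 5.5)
The plan is to take $\frac1m\log$ of the two available upper bounds and pass to the limit along $p=p_m$, with $p_m/m\to a$. The lower branch uses Theorem~\ref{thm:lower-derivative} and the upper branch uses Theorem~\ref{thm:entropy-transfer}. Cases with $p_m$ hopping between the ranges $m<p_m\le 2m$ and $p_m\ge 2m$ (only possible when $a=2$) are handled by splitting into two subsequences and bounding the limsup along each separately.

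\textbf{Lower range ($m<p_m\le2m$).} This range is eventually forced when $1\le a<2$, and it may occur along a subsequence when $a=2$. Here $q(m,p_m)=p_m/(p_m-m)$, so $1/q=1-m/p_m\to 1-1/a$. Theorem~\ref{thm:lower-derivative} gives
\[
 \frac1m\log\Hpol_{m,p_m}(\C)\le \frac{\log c_m}{m}+\frac{\log m}{p_m}+\frac{m-1}{m}\Bigl(1-\frac m{p_m}\Bigr)\log2 .
\]
We have $\log c_m\le1$, and $\log m/p_m\le \log m/m\to0$. The last term tends to $(1-1/a)\log2$. Hence the limsup of the $m$th root is at most $2^{1-1/a}$. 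This covers $a=1$, giving $1$.

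\textbf{Upper range ($p_m\ge2m$).} This range is eventually forced when $a>2$, and it may occur along a subsequence when $a=2$. Set $\tau_m=2m/p_m\to 2/a$. Then \eqref{eq:entropy-H-bound} gives
\[
 \frac1m\log\Hpol_{m,p_m}(\C)\le(1-\tau_m)\frac{\log D_m}{m}+\tau_m\frac{\log\Lambda_m}{m}.
\]
By \eqref{eq:Dm-subexponential} the first term tends to $0$. By the computation in the proof of the consequences of Theorem~\ref{thm:A}, $\frac1m\log\Lambda_m\to\frac12\log2$. Since $\tau_m\in[0,1]$, the right side therefore tends to $\frac{2}{a}\cdot\frac12\log 2=\frac{\log2}{a}$, i.e.\ the bound $2^{1/a}$. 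For $a=\infty$ we have $\tau_m\to0$, giving $1$; this is consistent with \eqref{eq:complex-far-uniform-intro}.

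\textbf{Case $a=2$.} Both branches give $2^{1/2}$, so the limsup along each subsequence is at most $\sqrt2$, and hence so is the overall limsup. This also verifies the stated agreement $2^{1-1/2}=2^{1/2}=\sqrt2$.

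The only delicate point is this subsequence bookkeeping at $a=2$, together with making sure every error term ($\log c_m/m$, $\log m/p_m$, $\log D_m/m$) is $o(1)$ uniformly; all of these follow directly from the bounds already recorded.
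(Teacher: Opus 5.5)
Your proposal is correct and follows the paper's proof. The lower branch comes from Theorem~\ref{thm:lower-derivative} via $1/q(m,p_m)=1-m/p_m\to1-1/a$, and the upper branch from \eqref{eq:entropy-H-bound} via $\log D_m=o(m)$, $\tau_m\to2/a$ and $\Lambda_m^{1/m}\to\sqrt2$. Your explicit subsequence split at $a=2$, where $p_m$ may fall on either side of $2m$, is bookkeeping the paper leaves implicit, and you handle it correctly.
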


\begin{proof}
If $1\leq a\leq2$, use \eqref{eq:lower-derivative-H}. Since
$c_m^{1/m}\to1$, $m^{1/p_m}\to1$, and
\[
 \frac1{q(m,p_m)}=1-\frac m{p_m}\longrightarrow1-\frac1a,
\]
taking $m$th roots gives the first branch. If $2\leq a\leq\infty$, use
\eqref{eq:entropy-H-bound}. The contribution of $D_m$ disappears after
$m$th roots by \eqref{eq:Dm-subexponential}, while
$\tau_m=2m/p_m\to2/a$ and
$\Lambda_m^{1/m}\to\sqrt2$. Thus the limiting upper base is
$(\sqrt2)^{2/a}=2^{1/a}$.
\end{proof}

\section{A simultaneous estimate across homogeneous degrees}\label{sec:graded}

For $r\geq1$, put
\begin{equation}\label{eq:graded-q0}
 q_r^0:=\frac{2r}{r+1}.
\end{equation}
By \cite[Proposition~5.6]{PellegrinoTeixeira2026}, with the parameter choice
verified in \cite[proof of Theorem~5.9]{PellegrinoTeixeira2026}, if
$b>\beta_\ast$, then there is $K_b<\infty$ such that every analytic
polynomial $G=\sum_{r=0}^R G_r$ on $\C^n$, with $G_r$ $r$-homogeneous,
satisfies
\begin{equation}\label{eq:endpoint-graded-BH}
 \left(
 \sum_{r=1}^R\frac{|G_r|_{q_r^0}^2}{r^{2b}}
 \right)^{1/2}
 \leq K_b\|G\|_\infty.
\end{equation}
For probability vectors $u=(u_i)$ and $t=(t_i)$ satisfying
$u_i>0\Rightarrow t_i>0$, write
\begin{equation}\label{eq:relative-entropy-definition}
 \mathsf D(u\Vert t):=\sum_{i:u_i>0}u_i\log\frac{u_i}{t_i}.
\end{equation}

\begin{lemma}\label{lem:common-rescaling-transfer}
Let $r\geq2$, $2r\leq p<\infty$, and put
\begin{equation}\label{eq:common-q-tau}
 q:=q(r,p),\qquad \tau:=\frac{2r}{p}.
\end{equation}
Let
\[
 P(z)=\sum_{|\alpha|=r}a_\alpha z^\alpha\neq0,
 \qquad A:=|P|_q,
 \qquad
 \mu_\alpha:=\frac{|a_\alpha|^q}{A^q},
\]
and define the probability vector
\begin{equation}\label{eq:common-rescaling-u}
 u_i:=\frac1r\sum_{|\alpha|=r}\alpha_i\mu_\alpha.
\end{equation}
Let $t=(t_i)$ be any probability vector with
$u_i>0\Rightarrow t_i>0$, and set
\begin{equation}\label{eq:common-rescaling-Q}
 Q(z):=P(t_1^{1/p}z_1,\ldots,t_n^{1/p}z_n).
\end{equation}
With $S(Q)$ defined by \eqref{eq:S-definition},
\begin{equation}\label{eq:common-rescaling-transfer}
 A\leq
 \exp\left\{\frac r p\mathsf D(u\Vert t)\right\}
 |Q|_{q_r^0}^{\,1-\tau}
 \left(\frac{S(Q)}{\sqrt r}\right)^{\tau}.
\end{equation}
\end{lemma}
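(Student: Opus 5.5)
The plan is to rerun the proof of Lemma~\ref{lem:coefficient-transfer} with the coefficient-adapted vector $u$ replaced by an arbitrary probability vector $t$, and to track the resulting defect. That defect should be exactly a relative entropy term. Throughout, $m$ is replaced by $r$ and $q_m$ by $q_r^0$; since $2r\leq p<\infty$, we again have $1/q=1/q_r^0-1/p$.

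\textbf{Step 1 (the $|Q|_{q_r^0}$ bound).} As before, coefficients with $\mu_\alpha=0$ are omitted. For $\mu_\alpha>0$ and $\alpha_i>0$ we have $u_i>0$, hence $t_i>0$, so every logarithm below is finite. With $b_\alpha=a_\alpha t^{\alpha/p}$, the same Jensen computation gives
\[
 \log\frac{|Q|_{q_r^0}}{A}\geq\frac1p\Bigl(\sum_{\alpha,i}\mu_\alpha\alpha_i\log t_i+H(\mu)\Bigr).
\]
The $\alpha$-marginal of $\nu(i,\alpha)=\alpha_i\mu_\alpha/r$ is now $u$, not $t$. Therefore
\[
 \sum_{\alpha,i}\mu_\alpha\alpha_i\log t_i=r\sum_iu_i\log t_i=-r\bigl(H(u)+\mathsf D(u\Vert t)\bigr).
\]
Setting $I:=rH(u)-H(\mu)$, this yields $|Q|_{q_r^0}\geq Ae^{-I/p}e^{-(r/p)\mathsf D(u\Vert t)}$.

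\textbf{Step 2 (the $S(Q)$ bound).} Apply \eqref{eq:log-sum-jensen} to $S(Q)=\sum_is_i$ with outer weights $u_i$ rather than $t_i$. For the inner sums use the conditional weights $\beta_{\alpha\mid i}=\nu(i,\alpha)/u_i$. This gives
\[
 \log\frac{S(Q)}{A}\geq H(u)+\frac12\sum_{i,\alpha}\nu(i,\alpha)\log\frac{\alpha_i^2\mu_\alpha^{2/q}t^{2\alpha/p}}{\beta_{\alpha\mid i}}.
\]
The logarithm expands to $\log r+\log u_i+\log\alpha_i+(2/q-1)\log\mu_\alpha+(2/p)\sum_j\alpha_j\log t_j$. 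Only the last term sees $t$. Averaging it against $\nu$ gives $-(2r/p)(H(u)+\mathsf D(u\Vert t))$. The remaining terms reproduce \eqref{eq:S-I-exact}, now with $I$ built from $u$. Dropping $\mathbb E_\nu\log\alpha_i\geq0$, we get
\[
 \frac{S(Q)}{\sqrt r}\geq A\exp\Bigl\{\Bigl(\frac1{2r}-\frac1p\Bigr)I-\frac rp\mathsf D(u\Vert t)\Bigr\}.
\]

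\textbf{Step 3 (combining).} Raise the first bound to the power $1-\tau$ and the second to the power $\tau$, then multiply. The $I$-terms cancel exactly as in \eqref{eq:entropy-cancellation}. The divergence terms add up to $-(1-\tau)\frac rp\mathsf D-\tau\frac rp\mathsf D=-\frac rp\mathsf D(u\Vert t)$. Rearranging gives \eqref{eq:common-rescaling-transfer}.

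\textbf{Main obstacle.} The main step to get right is the bookkeeping in Step 2. One has to choose $u$, not $t$, as the outer Jensen weights, so that the marginal identities still hold and the $\log u_i$ term cancels against $H(u)$. One then has to check that the leftover coefficient of $\mathsf D$ is exactly $r/p$ in both steps, so that the final exponent is $r/p$ and not something larger.
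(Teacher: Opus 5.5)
Your proposal is correct and is essentially the paper's proof. You use the same Jensen step for $|Q|_{q_r^0}$ producing $-I/p-(r/p)\mathsf D(u\Vert t)$, the same two-level application of \eqref{eq:log-sum-jensen} to $S(Q)$ with outer weights $u_i$ and conditional weights $\beta_{\alpha\mid i}=\nu(i,\alpha)/u_i$, and the same cancellation of $I$ that leaves exactly $-(r/p)\mathsf D(u\Vert t)$. The only slip is terminological: the marginal of $\nu$ obtained by summing over $\alpha$ is the one indexed by $i$, and it is this marginal that equals $u$.
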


\begin{proof}
Coordinates with $u_i=0$ do not occur in any multi-index with
$\mu_\alpha>0$ and may be omitted. Put
\[
 I:=rH(u)-H(\mu),
 \qquad
 b_\alpha:=a_\alpha t^{\alpha/p}.
\]
Since
\[
 \frac{q_r^0}{q}=1-\frac{q_r^0}{p},
\]
we have
\begin{align*}
 \left(\frac{|Q|_{q_r^0}}A\right)^{q_r^0}
 &=\sum_\alpha
   \mu_\alpha^{q_r^0/q}t^{q_r^0\alpha/p}\\
 &=\sum_\alpha\mu_\alpha
   \exp\left\{\frac{q_r^0}{p}
      \left(\sum_i\alpha_i\log t_i-\log\mu_\alpha\right)
   \right\}.
\end{align*}
Jensen's inequality gives
\begin{align}
 \log\frac{|Q|_{q_r^0}}A
 &\geq\frac1p
 \left(r\sum_i u_i\log t_i+H(\mu)\right)\notag\\
 &=-\frac Ip-\frac r p\mathsf D(u\Vert t),
 \label{eq:common-q0-lower}
\end{align}
because
\[
 \sum_i u_i\log t_i
 =\sum_i u_i\log u_i-\mathsf D(u\Vert t)
 =-H(u)-\mathsf D(u\Vert t).
\]

Define
\[
 \nu(i,\alpha):=\frac{\alpha_i\mu_\alpha}{r},
 \qquad
 \beta_{\alpha\mid i}:=\frac{\nu(i,\alpha)}{u_i}
 \quad(u_i>0),
\]
so that the marginals of $\nu$ are $u$ and $\mu$. Also set
\[
 s_i:=\left(\sum_\alpha
       \alpha_i^2|a_\alpha|^2t^{2\alpha/p}\right)^{1/2},
 \qquad S(Q)=\sum_i s_i.
\]
Applying \eqref{eq:log-sum-jensen} first to the outer sum with weights
$u_i$ and then to $s_i^2/A^2$ with weights
$\beta_{\alpha\mid i}$ yields
\begin{align*}
 \log\frac{S(Q)}A
 &\geq H(u)+\frac12\sum_{i,\alpha}\nu(i,\alpha)
 \log\left(
 \frac{\alpha_i^2\mu_\alpha^{2/q}t^{2\alpha/p}}
      {\beta_{\alpha\mid i}}
 \right).
\end{align*}
Since
\begin{equation}\label{eq:common-beta-formula}
 \beta_{\alpha\mid i}=\frac{\alpha_i\mu_\alpha}{r u_i},
\end{equation}
the logarithm equals
\begin{equation}\label{eq:common-log-expanded}
 \log r+\log u_i+\log\alpha_i
 +\left(\frac2q-1\right)\log\mu_\alpha
 +\frac2p\sum_j\alpha_j\log t_j.
\end{equation}
Using the two marginals of $\nu$ and
\begin{equation}\label{eq:common-cross-entropy}
 \sum_j u_j\log t_j=-H(u)-\mathsf D(u\Vert t),
\end{equation}
we obtain
\begin{align}
 \log\frac{S(Q)}A
 &\geq\frac12\log r+\frac12\mathbb E_\nu\log\alpha_i
 +\left(\frac12-\frac rp\right)H(u)
 -\left(\frac1q-\frac12\right)H(\mu)
 -\frac r p\mathsf D(u\Vert t)\notag\\
 &=\frac12\log r+\frac12\mathbb E_\nu\log\alpha_i
 +\left(\frac1{2r}-\frac1p\right)I
 -\frac r p\mathsf D(u\Vert t).
 \label{eq:common-S-lower}
\end{align}
Here
\begin{equation}\label{eq:common-exponent-identity}
 \frac1q-\frac12=\frac1{2r}-\frac1p.
\end{equation}
Whenever $\nu(i,\alpha)>0$, one has $\alpha_i\geq1$, and hence
$\mathbb E_\nu\log\alpha_i\geq0$. Therefore
\begin{equation}\label{eq:common-S-lower-simplified}
 \frac{S(Q)}{\sqrt r}
 \geq A\exp\left\{
 \left(\frac1{2r}-\frac1p\right)I
 -\frac r p\mathsf D(u\Vert t)
 \right\}.
\end{equation}
Raise \eqref{eq:common-q0-lower} to the power $1-\tau$ and
\eqref{eq:common-S-lower-simplified} to the power $\tau$. The coefficient of $I$ vanishes by
\begin{equation}\label{eq:common-entropy-cancellation}
 -\frac{1-\tau}{p}
 +\tau\left(\frac1{2r}-\frac1p\right)=0,
 \qquad \tau=\frac{2r}{p}.
\end{equation}
The coefficient of $\mathsf D(u\Vert t)$ is $-r/p$. Thus
\[
 |Q|_{q_r^0}^{1-\tau}
 \left(\frac{S(Q)}{\sqrt r}\right)^\tau
 \geq A\exp\left\{-\frac r p\mathsf D(u\Vert t)\right\},
\]
which is \eqref{eq:common-rescaling-transfer}.
\end{proof}

\begin{theorem}\label{thm:graded-HL}
Assume that $b>0$ is such that the endpoint estimate
\eqref{eq:endpoint-graded-BH} holds with constant $K_b$. Let $c>0$ and
\[
 B>b+\frac{\log2}{c}.
\]
There is a constant $C_{B,b,c}$ such that, for every $R\geq2$, every
\[
 2R\leq p<\infty,\qquad p\geq c\,\frac{R^2}{\log R},
\]
every $n\in\N$, and every polynomial
\[
 F=\sum_{r=0}^R F_r:\C^n\longrightarrow\C,
 \qquad F_r\in\Pol_r(\C^n),
\]
one has
\begin{equation}\label{eq:graded-HL}
 \left(
 \sum_{r=1}^R
 \frac{|F_r|_{q(r,p)}^2}{r^{2B}}
 \right)^{1/2}
 \leq C_{B,b,c}\|F\|_p,
 \qquad
 \|F\|_p:=\sup_{\|z\|_p\leq1}|F(z)|.
\end{equation}
\end{theorem}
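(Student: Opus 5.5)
The plan is to run the argument of Lemma~\ref{lem:common-rescaling-transfer} at \emph{all} degrees simultaneously, with a single common rescaling vector $t$. Then apply the endpoint graded estimate \eqref{eq:endpoint-graded-BH} once, to the rescaled full polynomial $G(z):=F(t_1^{1/p}z_1,\ldots,t_n^{1/p}z_n)$.

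\textbf{Step 1: reduction and the common vector $t$.} The $r=0$ term does not enter the left side of \eqref{eq:graded-HL}. Fix $F$, and for each $r$ with $F_r\neq0$ form $\mu^{(r)}$ and $u^{(r)}$ as in \eqref{eq:common-rescaling-u}. Put
\[
 t:=\sum_r w_r u^{(r)},\qquad w_r:=\frac{r^{-2}}{\sum_{k\geq1}k^{-2}},
\]
with the sum over active degrees and the weights renormalised if needed. Then $t$ is a probability vector, $u^{(r)}_i>0\Rightarrow t_i>0$, and $t_i\geq w_ru^{(r)}_i$. Hence
\[
 \mathsf D(u^{(r)}\Vert t)\leq\log(1/w_r)=O(\log r).
\]
Since $\sum_it_i|z_i|^p\leq1$ on $\mathbb D^n$, exactly as in \eqref{eq:Q-norm-transfer} we get $\|G\|_\infty\leq\|F\|_p$. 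Moreover the $r$-homogeneous part of $G$ is $G_r=Q_r:=F_r(t^{1/p}\,\cdot)$, and phase averaging gives $\|G_r\|_\infty\leq\|G\|_\infty$.

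\textbf{Step 2: degree-wise bound.} Write $g:=\|G\|_\infty$, $a_r:=|Q_r|_{q_r^0}$ and $\tau_r=2r/p\in(0,1]$. For $r\geq2$, Lemma~\ref{lem:common-rescaling-transfer} together with \eqref{eq:S-upper} applied to $Q_r$ gives
\[
 |F_r|_{q(r,p)}\leq e^{\frac rp\mathsf D(u^{(r)}\Vert t)}\,a_r^{1-\tau_r}(\Lambda_rg)^{\tau_r}.
\]
The degree $r=1$ is handled directly: $|F_1|_{q(1,p)}=\|F_1\|_p\leq\|F\|_p$. The prefactor is $\exp\{O(r\log r/p)\}=O(1)$. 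Also
\[
 \Lambda_r^{\tau_r}=\exp\Bigl\{\tfrac{2r}{p}\bigl(\tfrac{r-1}2\log2+\tfrac12\log r+1\bigr)\Bigr\}\leq C\exp\Bigl\{\tfrac{r^2\log2}{p}\Bigr\}.
\]

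\textbf{Step 3 (main obstacle): the growth of $\Lambda_r^{\tau_r}$.} This is where the hypothesis $p\geq cR^2/\log R$ enters. Put $x=r/R$. Then
\[
 \frac{r^2}{p}\leq\frac{x^2\log R}{c}\leq\frac{\log r}{c}+\frac{(x^2-1)\log R-\log x}{c},
\]
and the last fraction is bounded above uniformly in $R$ and $x\in[1/R,1]$: it is at most $(\log x)(x^2\log R/\log x\ldots)$, and on checking the regimes $x\leq1/2$ and $x\in[1/2,1]$ separately it is $O(1)$. Hence $\Lambda_r^{\tau_r}\leq C\,r^{(\log2)/c}$. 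I expect making this uniform estimate clean to be the most delicate point.

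\textbf{Step 4: summation.} Young's inequality gives $a^{1-\tau}y^{\tau}\leq a+\tau y$, and therefore
\[
 |F_r|_{q(r,p)}\leq C\,r^{(\log2)/c}\bigl(a_r+\tau_rg\bigr).
\]
Choose $\varepsilon>0$ with $B\geq b+(\log2)/c+\varepsilon$. For the first part,
\[
 \sum_r\frac{r^{2(\log2)/c}a_r^2}{r^{2B}}\leq\sum_r\frac{a_r^2}{r^{2b}}\leq K_b^2g^2
\]
by \eqref{eq:endpoint-graded-BH} applied to $G$. For the second part, $\tau_r^2\leq4r^2/p^2\leq4r^2\log^2R/(c^2R^4)$, so
\[
 \sum_{r\leq R}\frac{r^{2(\log2)/c}\tau_r^2}{r^{2B}}\leq\frac{C\log^2R}{R^4}\sum_{r\leq R}r^2=O\Bigl(\frac{\log^2R}{R}\Bigr),
\]
which is bounded because $B\geq(\log2)/c$. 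Combining the two parts with $g\leq\|F\|_p$ yields \eqref{eq:graded-HL}, with a constant depending only on $B$, $b$, $c$ and $K_b$.
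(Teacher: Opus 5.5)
Your proof is correct. Its overall structure matches the paper's: one common rescaling vector built from the marginals $u^{(r)}$, Lemma~\ref{lem:common-rescaling-transfer} applied degree by degree, the bound $\Lambda_r^{\tau_r}\lesssim r^{(\log2)/c}$, and a single application of \eqref{eq:endpoint-graded-BH} to the rescaled polynomial. The paper uses uniform weights $1/s$ where you use weights $w_r\propto r^{-2}$. This difference is cosmetic: both make $\frac rp\mathsf D(u^{(r)}\Vert t)$ bounded once $p\geq cR^2/\log R$.

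\textbf{Step 4 is a genuinely different finish.} The paper keeps the variable exponents $1-\tau_r$. It normalizes $x_r=y_r/K_b\leq1$ and compares $\ell_{2(1-\delta_R)}$ with $\ell_2$, which costs a factor $R^{\delta_R}$ and requires treating small $R$ separately so that $\delta_R<1$. You instead linearize by weighted AM--GM, $a^{1-\tau}y^{\tau}\leq a+\tau y$. Then the main term is exactly the endpoint sum, using $B\geq b+(\log2)/c$. The error term is controlled by $\sum_r\tau_r^2\leq4R^3/p^2=O((\log R)^2/R)$. This is slightly cleaner and avoids the small-$R$ case distinction. Both finishes use the hypothesis on $p$ only mildly at this stage.

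\textbf{Step 3, as written, does not prove the claim, although the claim is true.} In the regime $x\leq1/2$, the crude bounds $x^2-1\leq-3/4$ and $-\log x\leq\log R$ give only $\tfrac14\log R$, not $O(1)$. You do not need the regime split. The function $x\mapsto x^2/\log x$ is increasing on $[2,\infty)$, so
\[
p\geq c\,\frac{R^2}{\log R}\geq c\,\frac{r^2}{\log r}\qquad(2\leq r\leq R).
\]
Hence $r^2/p\leq(\log r)/c$ directly. The remaining terms $2r/p$ and $(r\log r)/p$ are bounded by a constant depending on $c$. This gives $\Lambda_r^{\tau_r}\leq C_c\,r^{(\log2)/c}$, which is how the paper obtains \eqref{eq:Lambda-polynomial-graded}.

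A minor point: in Step 2 the displayed formula for $\Lambda_r^{\tau_r}$ should be an inequality, since you replaced $\log c_r$ by $1$.
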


\begin{proof}
For any polynomial $H=\sum_k H_k$ on an $\ell_p$ ball, radial Fourier
extraction gives
\[
 H_k(z)=\int_{\T}H(\omega z)\overline{\omega}^{\,k}\,d\mathrm m_1(\omega),
\]
and hence $\|H_k\|_p\leq\|H\|_p$. The same formula gives
$\|H_k\|_\infty\leq\|H\|_\infty$ on the polydisc. In degree one,
duality therefore gives
\[
 |F_1|_{q(1,p)}=\|F_1\|_p\leq\|F\|_p.
\]
For $2\leq r\leq R$, put
\[
 A_r:=|F_r|_{q(r,p)},
 \qquad
 J:=\{r\in\{2,\ldots,R\}:A_r>0\}.
\]
If $J=\varnothing$, \eqref{eq:graded-HL} follows from the degree-one estimate above. For $r\in J$, define
\[
 \mu_\alpha^{(r)}
 :=\frac{|a_\alpha(F_r)|^{q(r,p)}}{A_r^{q(r,p)}},
 \qquad
 u_i^{(r)}:=\frac1r\sum_{|\alpha|=r}
              \alpha_i\mu_\alpha^{(r)}.
\]
Let $s:=\card J$ and choose the common probability vector
\begin{equation}\label{eq:common-t-graded}
 t:=\frac1s\sum_{r\in J}u^{(r)}.
\end{equation}
For every $r\in J$ and every $i$,
\[
 t_i\geq\frac1s u_i^{(r)}.
\]
Consequently,
\begin{equation}\label{eq:KL-logR}
 \mathsf D(u^{(r)}\Vert t)
 \leq\sum_i u_i^{(r)}\log s
 =\log s\leq\log R.
\end{equation}

Define a single rescaled polynomial
\begin{equation}\label{eq:common-Q-graded}
 Q(z):=F(t_1^{1/p}z_1,\ldots,t_n^{1/p}z_n)
      =\sum_{r=0}^R Q_r(z).
\end{equation}
Since $t$ is a probability vector, for $z\in\mathbb D^n$,
\[
 \sum_i|t_i^{1/p}z_i|^p
 \leq\sum_i t_i=1,
\]
and hence
\begin{equation}\label{eq:common-Q-norm}
 \|Q\|_\infty\leq\|F\|_p.
\end{equation}

Fix $r\in J$ and put $\tau_r:=2r/p$. Lemma~\ref{lem:common-rescaling-transfer},
\eqref{eq:KL-logR}, and Lemma~\ref{lem:derivative-entropy} give
\begin{align}
 A_r
 &\leq
 \exp\left\{\frac{r\log R}{p}\right\}
 |Q_r|_{q_r^0}^{1-\tau_r}
 \left(\frac{S(Q_r)}{\sqrt r}\right)^{\tau_r}\notag\\
 &\leq
 \exp\left\{\frac{R\log R}{p}\right\}
 |Q_r|_{q_r^0}^{1-\tau_r}
 \bigl(\Lambda_r\|Q_r\|_\infty\bigr)^{\tau_r}\notag\\
 &\leq
 \exp\left\{\frac{R\log R}{p}\right\}
 |Q_r|_{q_r^0}^{1-\tau_r}
 \bigl(\Lambda_r\|F\|_p\bigr)^{\tau_r}.
 \label{eq:graded-transfer-component}
\end{align}
The last step uses the contractivity of homogeneous projections on the
polydisc together with \eqref{eq:common-Q-norm}.

Choose $\eta>0$ so that
\begin{equation}\label{eq:graded-eta-choice}
 b+\frac{\log2}{c}+\eta<B.
\end{equation}
Since $x\mapsto x^2/\log x$ is increasing for $x\geq2$, the hypothesis on
$p$ implies
\[
 p\geq c\,\frac{r^2}{\log r}
 \qquad(2\leq r\leq R).
\]
Using $c_r\leq e$ and
$\Lambda_r=c_r\sqrt r\,2^{(r-1)/2}$, we obtain
\begin{align*}
 \log\bigl(\Lambda_r^{2r/p}\bigr)
 &\leq\frac{2r}{p}
 +\frac r p\log r
 +\frac{r(r-1)}p\log2\\
 &\leq\frac{2\log r}{cr}
 +\frac{(\log r)^2}{cr}
 +\frac{\log2}{c}\left(1-\frac1r\right)\log r.
\end{align*}
Hence there is $C_{c,\eta}$ such that
\begin{equation}\label{eq:Lambda-polynomial-graded}
 \Lambda_r^{\tau_r}
 \leq C_{c,\eta}r^{(\log2)/c+\eta}
 \qquad(2\leq r\leq R).
\end{equation}
Also
\begin{equation}\label{eq:KL-global-factor}
 \exp\left\{\frac{R\log R}{p}\right\}
 \leq\exp\left\{\frac{(\log R)^2}{cR}\right\},
\end{equation}
which is bounded uniformly in $R\geq2$.

Set $N:=\|F\|_p$ and
\[
 y_r:=\frac{|Q_r|_{q_r^0}}{r^bN}
 \qquad(r\in J).
\]
Equation \eqref{eq:endpoint-graded-BH} and
\eqref{eq:common-Q-norm} imply
\begin{equation}\label{eq:y-l2}
 \sum_{r\in J}y_r^2\leq K_b^2.
\end{equation}
From \eqref{eq:graded-transfer-component}--\eqref{eq:Lambda-polynomial-graded},
\eqref{eq:graded-eta-choice}, and $0\leq\tau_r\leq1$,
\begin{align}
 \frac{A_r}{r^B N}
 &\leq C_{B,b,c}
 r^{b(1-\tau_r)+(\log2)/c+\eta-B}
 y_r^{1-\tau_r}\notag\\
 &\leq C_{B,b,c}y_r^{1-\tau_r}.
 \label{eq:graded-y-bound}
\end{align}
We may assume $K_b\geq1$ and put $x_r:=y_r/K_b$. Then
$0\leq x_r\leq1$ and $\sum_{r\in J}x_r^2\leq1$.
Let
\[
 \delta_R:=\frac{2R}{p}.
\]
For all sufficiently large $R$, depending only on $c$, one has
$\delta_R<1$. Since $\tau_r\leq\delta_R$ and $0\leq x_r\leq1$,
\[
 x_r^{2(1-\tau_r)}
 \leq x_r^{2(1-\delta_R)}.
\]
The finite-dimensional comparison between $\ell_2$ and
$\ell_{2(1-\delta_R)}$ gives
\begin{align}
 \sum_{r\in J}x_r^{2(1-\tau_r)}
 &\leq\sum_{r\in J}x_r^{2(1-\delta_R)}\notag\\
 &\leq s^{\delta_R}
       \left(\sum_{r\in J}x_r^2\right)^{1-\delta_R}
 \leq R^{\delta_R}\notag\\
 &\leq\exp\left\{\frac{2(\log R)^2}{cR}\right\}.
 \label{eq:variable-power-sum}
\end{align}
The finitely many remaining values of $R$ are absorbed into the constant
$C_{B,b,c}$: since $p\geq2R$, one has $0\leq\tau_r\leq1$, and the
corresponding finite sum is bounded by the largest such $R$. Squaring
\eqref{eq:graded-y-bound}, summing in $r$, and using
\eqref{eq:y-l2}--\eqref{eq:variable-power-sum} proves
\eqref{eq:graded-HL}.
\end{proof}

\begin{corollary}\label{cor:graded-HL-polynomial}
Let $\beta_\ast<2.47$ be defined in
\cite[equation~(5.31)]{PellegrinoTeixeira2026}. For every
$c>0$ and every
\[
 B>\beta_\ast+\frac{\log2}{c},
\]
there is $C_{B,c}<\infty$ such that \eqref{eq:graded-HL} holds whenever
$2R\leq p<\infty$ and $p\geq cR^2/\log R$.
\end{corollary}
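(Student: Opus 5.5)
This corollary follows from Theorem~\ref{thm:graded-HL} once we have a value $b$ that is admissible there, strictly below $B-(\log2)/c$ and above $\beta_\ast$. Since $B>\beta_\ast+(\log2)/c$, the open interval $\bigl(\beta_\ast,\,B-(\log2)/c\bigr)$ is nonempty. We fix any $b$ in it, for instance
\[
 b:=\tfrac12\Bigl(\beta_\ast+B-\tfrac{\log2}{c}\Bigr).
\]
Then $b>\beta_\ast$ and $B>b+(\log2)/c$.

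Next we check that the endpoint graded Bohnenblust--Hille estimate \eqref{eq:endpoint-graded-BH} holds for this $b$ with some finite constant $K_b$. This is exactly the statement recorded before Lemma~\ref{lem:common-rescaling-transfer}: by \cite[Proposition~5.6]{PellegrinoTeixeira2026}, with the parameter choice in \cite[proof of Theorem~5.9]{PellegrinoTeixeira2026}, every $b>\beta_\ast$ admits such a $K_b$. The only point to confirm is that the threshold $\beta_\ast$ of \cite[equation~(5.31)]{PellegrinoTeixeira2026} is the same threshold used there. We also need $b>0$, which Theorem~\ref{thm:graded-HL} requires. This holds because $\beta_\ast\geq0$: the weights $r^{-2b}$ with $b\le 0$ cannot work, since the degree-$r$ Bohnenblust--Hille constants are at least $1$ and the sum over $r$ would diverge for suitable $G$. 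If one prefers not to argue this, one may simply replace $b$ by $\max\{b,b'\}$ for a small positive $b'$ chosen still below $B-(\log2)/c$. This is possible when $B-(\log2)/c>0$, which holds since $\beta_\ast\ge0$.

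With $b$ fixed, Theorem~\ref{thm:graded-HL} applies with this $b$, the given $c$ and $B$. It yields a constant $C_{B,b,c}$ such that \eqref{eq:graded-HL} holds for all $R\geq2$, all $2R\leq p<\infty$ with $p\geq cR^2/\log R$, all $n$, and all $F$. Since $b$ was chosen as a function of $(B,c)$ alone, we set $C_{B,c}:=C_{B,b(B,c),c}$. The only step needing care is the bookkeeping that the cited threshold matches $\beta_\ast$; there is no analytic obstacle.
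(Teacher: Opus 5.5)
Your argument is correct and is exactly the paper's proof: choose $b$ with $\beta_\ast<b<B-(\log2)/c$, note that the endpoint estimate \eqref{eq:endpoint-graded-BH} holds for this $b$, and apply Theorem~\ref{thm:graded-HL}. Your extra check that $b>0$ is something the paper leaves implicit, and it has a simpler justification than yours: testing \eqref{eq:endpoint-graded-BH} on $G=z_1^R$ gives $R^{-b}\leq K_b$ for all $R$, which rules out every $b<0$, so $\beta_\ast\geq0$ and hence $b>\beta_\ast$ forces $b>0$.
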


\begin{proof}
Choose $b$ so that
\[
 \beta_\ast<b<B-\frac{\log2}{c}.
\]
The endpoint estimate \eqref{eq:endpoint-graded-BH} holds for this $b$, and
Theorem~\ref{thm:graded-HL} applies.
\end{proof}

\section{Proof of Theorem B}\label{sec:B}

\begin{proof}[Proof of Theorem B]
The upper estimate follows by complexification, while the lower estimate comes
from a real construction whose bound is uniform in $p$.
Let $P\in\Pol_m(\R^n)$, with
\[
 P(x)=\sum_{\alpha\in\mathcal M_m(n)}a_\alpha x^\alpha.
\]
Its coordinatewise complexification is
\[
 P_\C:\C^n\longrightarrow\C,\qquad
 P_\C(z):=\sum_{\alpha\in\mathcal M_m(n)}a_\alpha z^\alpha.
\]
The classical complexification estimate
\begin{equation}\label{eq:complexification-bound}
 \|P_\C\|_p\leq2^{m-1}\|P\|_p,
 \qquad 1\leq p\leq\infty,
\end{equation}
where the norms are taken on the complex and real $\ell_p$ unit balls,
respectively, is proved in \cite[Proposition~18]{MunozSarantopoulosTonge1999};
see also \cite[Section~4]{AraujoEtAl2015}. Since $P$ and $P_\C$ have the
same coefficients,
\[
 |P|_{q(m,p)}=|P_\C|_{q(m,p)}
 \leq\Hpol_{m,p}(\C)\|P_\C\|_p
 \leq2^{m-1}\Hpol_{m,p}(\C)\|P\|_p.
\]
Taking the least constant, uniformly over $n$ and $P$, yields
\begin{equation}\label{eq:real-comparison}
 \Hpol_{m,p}(\R)\leq2^{m-1}\Hpol_{m,p}(\C).
\end{equation}
Consequently, by \eqref{eq:complex-root-uniform},
\begin{equation}\label{eq:real-upper}
 \limsup_{m\to\infty}\sup_{m<p\leq\infty}
       \bigl(\Hpol_{m,p}(\R)\bigr)^{1/m}
 \leq2\sqrt2.
\end{equation}

The lower bound follows from the polynomials in
\cite[proof of Theorem~3.1]{CamposEtAl2015} and their extension to all
degrees, as in \cite[Section~3]{RaposoTeixeira2023}. Define
$U_k:\R^{2^k}\to\R$, $k\geq1$, recursively by
\begin{equation}\label{eq:real-construction}
 \begin{split}
 U_1(x_1,x_2)&:=x_1^2-x_2^2,\\
 U_{k+1}(x_1,\ldots,x_{2^{k+1}})
 &:=U_k(x_1,\ldots,x_{2^k})^2
   -U_k(x_{2^k+1},\ldots,x_{2^{k+1}})^2.
 \end{split}
\end{equation}
Then $U_k$ is $2^k$-homogeneous and $\|U_k\|_\infty=1$. Indeed,
$|U_1|\leq1$ on $[-1,1]^2$, and if $|U_k|\leq1$ on its cube, then the
difference of its two squares belongs to $[-1,1]$. The equality
$U_k(1,0,\ldots,0)=1$ proves the reverse norm inequality.

For $k,\nu\in\N$, put
\[
 Q_{k,\nu}:\R^{2^k}\to\R,\qquad Q_{k,\nu}(x):=U_k(x)^\nu.
\]
Thus $Q_{k,\nu}$ has degree $\nu2^k$ and $\|Q_{k,\nu}\|_\infty=1$. The
coefficient estimate \cite[equation~(3.1)]{CamposEtAl2015} is
\begin{equation}\label{eq:real-max-coefficient}
 |Q_{k,\nu}|_\infty
 \geq\left(\frac{2^\nu}{\nu+1}\right)^{2^k-1},
\end{equation}
where $|\cdot|_\infty$ denotes the largest coefficient modulus, not the
polynomial norm.

Fix $k\geq1$ and put $d:=2^k$. For $m\geq d$, define
\[
 \nu:=\left\lfloor\frac md\right\rfloor,
 \qquad r:=m-\nu d,
 \qquad0\leq r<d,
\]
and
\begin{equation}\label{eq:degree-extension}
 R_{m,k}:\R^{d+1}\longrightarrow\R,
 \qquad R_{m,k}(x,t):=Q_{k,\nu}(x)t^r.
\end{equation}
This polynomial has degree $m$. Since $x$ and $t$ range independently on the
cube,
\[
 \|R_{m,k}\|_\infty
 =\|Q_{k,\nu}\|_\infty\sup_{|t|\leq1}|t|^r=1.
\]
The map $\alpha\mapsto(\alpha,r)$ identifies the coefficients of
$Q_{k,\nu}$ with those of $R_{m,k}$, so
$|R_{m,k}|_\infty=|Q_{k,\nu}|_\infty$. For every $m<p\leq\infty$,
\[
 \{z\in\R^{d+1}:\|z\|_p\leq1\}\subseteq[-1,1]^{d+1},
 \qquad \|R_{m,k}\|_p\leq1.
\]
Therefore
\begin{equation}\label{eq:real-finite-lower}
 \Hpol_{m,p}(\R)
 \geq\frac{|R_{m,k}|_{q(m,p)}}{\|R_{m,k}\|_p}
 \geq|R_{m,k}|_\infty
 \geq\left(\frac{2^\nu}{\nu+1}\right)^{d-1}.
\end{equation}
The right-hand side is independent of $p$. For fixed $d$, $\nu/m\to1/d$
and $\log(\nu+1)/m\to0$. Hence
\begin{align*}
 \lim_{m\to\infty}\frac1m
 \log\left(\frac{2^\nu}{\nu+1}\right)^{d-1}
 &=\left(1-\frac1d\right)\log2.
\end{align*}
Taking the infimum over $p$ in \eqref{eq:real-finite-lower} gives
\[
 \liminf_{m\to\infty}\inf_{m<p\leq\infty}
 \bigl(\Hpol_{m,p}(\R)\bigr)^{1/m}
 \geq2^{1-2^{-k}}.
\]
This holds for every fixed $k$. Letting $k\to\infty$ proves
\begin{equation}\label{eq:real-lower}
 \liminf_{m\to\infty}\inf_{m<p\leq\infty}
 \bigl(\Hpol_{m,p}(\R)\bigr)^{1/m}\geq2.
\end{equation}
Together with \eqref{eq:real-upper}, this proves
\eqref{eq:real-scale-intro}.

Let $R_m\to\infty$ for the uniform far-range assertion.
For $p\geq mR_m$, \eqref{eq:real-comparison} and
\eqref{eq:complex-far-uniform-intro} give
\[
 \limsup_{m\to\infty}\sup_{p\geq mR_m}
 \bigl(\Hpol_{m,p}(\R)\bigr)^{1/m}\leq2.
\]
The lower construction is uniform in all $p>m$;
hence \eqref{eq:real-lower} implies
\[
 \liminf_{m\to\infty}\inf_{p\geq mR_m}
 \bigl(\Hpol_{m,p}(\R)\bigr)^{1/m}\geq2.
\]
The two estimates prove \eqref{eq:real-far-uniform-intro}.
\end{proof}

\begin{corollary}\label{cor:quant-lower}
There is an absolute constant $C>0$ such that, for all sufficiently large
$m$ and every $m<p\leq\infty$,
\begin{equation}\label{eq:quant-lower}
 \Hpol_{m,p}(\R)\geq
 2^m\exp\!\bigl(-C\sqrt{m\log(m+1)}\bigr).
\end{equation}
The polynomials giving this estimate use at most
$2\sqrt{m/\log(m+1)}$ variables.
\end{corollary}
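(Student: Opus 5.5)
The plan is to reuse the lower-bound construction from the proof of Theorem~B, but to let the parameter $k$ depend on $m$ instead of fixing it. Fix $m$ and choose $k\geq1$. Put $d:=2^k$, $\nu:=\lfloor m/d\rfloor$, $r:=m-\nu d$, and take $R_{m,k}(x,t)=Q_{k,\nu}(x)t^r$ on $\R^{d+1}$. The estimate \eqref{eq:real-finite-lower} then gives, uniformly in $m<p\leq\infty$,
\[
 \log\Hpol_{m,p}(\R)\geq(d-1)\bigl(\nu\log2-\log(\nu+1)\bigr).
\]
Since $\nu\geq m/d-1$, the right side is at least
\[
 \Bigl(m-\frac md-d+1\Bigr)\log2-d\log(m+1),
\]
up to harmless terms. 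This is
\[
 m\log2-\Bigl(\frac md\log2+d\log2+d\log(m+1)\Bigr).
\]
So the loss relative to $2^m$ is of order $m/d+d\log(m+1)$.

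Next, optimize the choice of $d$. The loss is minimized at $d\approx\sqrt{m/\log(m+1)}$, where it equals $O(\sqrt{m\log(m+1)})$. Since $d$ must be a power of $2$, choose $k$ so that
\[
 \tfrac12\sqrt{m/\log(m+1)}<d\leq\sqrt{m/\log(m+1)}.
\]
This is possible once $m$ is large enough that the right side is at least $2$. With this choice,
\[
 m/d\leq 2\sqrt{m\log(m+1)}\qquad\text{and}\qquad d\log(m+1)\leq\sqrt{m\log(m+1)}.
\]
Hence the loss is bounded by $C\sqrt{m\log(m+1)}$ with an absolute constant $C$, which gives \eqref{eq:quant-lower}. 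The requirement $m\geq d$ needed for the construction holds automatically for large $m$.

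The number of variables used is $d+1$. This exceeds $\sqrt{m/\log(m+1)}$ by at most one, so for large $m$ it is at most $2\sqrt{m/\log(m+1)}$. This matches the stated variable count.

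The main point requiring care is the bookkeeping with the floor function. The quantity $\nu+1$ must be bounded by $m+1$; this holds because $\nu\leq m$. The remainder $r<d$ must also be absorbed into the $d\log 2$ term. Beyond that, every step is a direct substitution into \eqref{eq:real-max-coefficient} and \eqref{eq:real-finite-lower}, since the bound there is already independent of $p$.
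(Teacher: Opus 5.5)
Your proposal is correct and follows the paper's proof: the same construction $R_{m,k}$ with $d=2^k$ chosen so that $\tfrac12\sqrt{m/\log(m+1)}<d\le\sqrt{m/\log(m+1)}$, the same deficit bound $(d+m/d)\log2+d\log(m+1)\le(3\log2+1)\sqrt{m\log(m+1)}$, and the same variable count $d+1\le2\sqrt{m/\log(m+1)}$. Your lower bound $(m-m/d-d+1)\log2-d\log(m+1)$ needs no ``harmless terms'': it follows directly from $(d-1)\nu\ge(d-1)(m/d-1)$ and $\nu\le m$.
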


\begin{proof}
Let $L:=\log(m+1)$ and $u:=\sqrt{m/L}$. For $m\geq16$, $u\geq2$. Choose
\[
 k:=\left\lfloor\frac{\log u}{\log2}\right\rfloor,
 \qquad d:=2^k,
 \qquad \frac u2<d\leq u,
\]
and take the polynomial $R_{m,k}$ from \eqref{eq:degree-extension}, with
$\nu=\lfloor m/d\rfloor$ and $r=m-\nu d$. By
\eqref{eq:real-finite-lower}, uniformly for $m<p\leq\infty$,
\begin{align*}
 m\log2-\log\Hpol_{m,p}(\R)
 &\leq[m-\nu(d-1)]\log2+(d-1)\log(\nu+1)\\
 &=(r+\nu)\log2+(d-1)\log(\nu+1)\\
 &\leq(d+m/d)\log2+d\log(m/d+1).
\end{align*}
The choice of $d$ gives
\[
 d\leq\sqrt{m/L}\leq\sqrt{mL},\qquad
 m/d<2\sqrt{mL},\qquad
 d\log(m/d+1)\leq dL\leq\sqrt{mL}.
\]
Thus the deficit is at most
$(3\log2+1)\sqrt{mL}\leq4\sqrt{mL}$, proving
\eqref{eq:quant-lower} with $C=4$. The number of variables is
$d+1\leq u+1\leq2u$.
\end{proof}

\section{Proof of Theorem C}\label{sec:C}

\begin{proof}[Proof of Theorem C]
For $0<h<m$, the multiplicity-pattern estimate
\eqref{eq:complex-pattern-finite} with $p=m+h$ gives the required
near-boundary bounds.
Set $A_0:=1+4e/\pi$ and $L_0:=1+\log A_0$.
For $0<h<m$, $p=m+h$ lies in the lower Hardy--Littlewood range and
\[
 \frac1{q(m,m+h)}=\frac h{m+h}.
\]
Define
\begin{equation}\label{eq:critical-remainder}
 E_m:(0,m)\longrightarrow\R,\qquad
 E_m(h):=\frac h{m+h}\bigl[1+(m-1)\log A_0\bigr].
\end{equation}
Proposition~\ref{prop:pattern-global} yields
\begin{equation}\label{eq:critical-finite}
 m^{m/(m+h)}\leq\Hpol_{m,m+h}(\C)
 \leq m^{m/(m+h)}\exp\bigl(E_m(h)\bigr).
\end{equation}
For $0<h<m$,
\begin{equation}\label{eq:critical-error-bound}
 0\leq E_m(h)
 \leq\frac hm\bigl[1+(m-1)\log A_0\bigr]
 \leq L_0h.
\end{equation}
Taking logarithms in \eqref{eq:critical-finite} gives
\begin{equation}\label{eq:critical-log}
 \frac m{m+h}\log m
 \leq\log\Hpol_{m,m+h}(\C)
 \leq\frac m{m+h}\log m+L_0h.
\end{equation}

\textup{(i)} Let $h_m>0$ satisfy $h_m/m\to0$.
For large $m$, $h_m<m$, and \eqref{eq:critical-log} gives
\[
 0\leq\sup_{0<h\leq h_m}\frac{\log\Hpol_{m,m+h}(\C)}m
 \leq\frac{\log m}m+L_0\frac{h_m}m\longrightarrow0.
\]
It follows that
\begin{equation}\label{eq:critical-complex-uniform}
 \sup_{0<h\leq h_m}
       \left|(\Hpol_{m,m+h}(\C))^{1/m}-1\right|\longrightarrow0.
\end{equation}
By \eqref{eq:real-comparison},
\[
 \sup_{0<h\leq h_m}\bigl(\Hpol_{m,m+h}(\R)\bigr)^{1/m}
 \leq2^{1-1/m}
       \exp\left(\frac{\log m}m+L_0\frac{h_m}m\right).
\]
The right side tends to $2$.
For the lower bound, \eqref{eq:real-lower} implies
\[
 \liminf_{m\to\infty}\inf_{0<h\leq h_m}
       \bigl(\Hpol_{m,m+h}(\R)\bigr)^{1/m}\geq2.
\]
Combining these upper and lower bounds gives
\begin{equation}\label{eq:critical-real-uniform}
 \sup_{0<h\leq h_m}
       \left|\bigl(\Hpol_{m,m+h}(\R)\bigr)^{1/m}-2\right|
       \longrightarrow0.
\end{equation}
Taking $h_m=p_m-m$ proves the sequential assertions in \textup{(i)}.

\textup{(ii)} Set $h_m:=p_m-m=o(\log m)$. For large $m$, $h_m<m$,
and \eqref{eq:critical-log} implies
\[
 \frac m{m+h_m}
 \leq\frac{\log\Hpol_{m,p_m}(\C)}{\log m}
 \leq\frac m{m+h_m}+L_0\frac{h_m}{\log m}.
\]
Both outer expressions tend to $1$, so
$\log\Hpol_{m,p_m}(\C)=(1+o(1))\log m$ and
$\Hpol_{m,p_m}(\C)=m^{1+o(1)}$.

\textup{(iii)} Let $h_m:=p_m-m\to0$. Subtracting $\log m$ from
\eqref{eq:critical-log} yields
\[
 -\frac{h_m\log m}{m+h_m}
 \leq\log\frac{\Hpol_{m,p_m}(\C)}m
 \leq-\frac{h_m\log m}{m+h_m}+L_0h_m.
\]
Both outer expressions tend to zero. Exponentiation gives
$\Hpol_{m,p_m}(\C)/m\to1$.

\textup{(iv)} Fix $h>0$. For all sufficiently large $m$,
$h<m$ and $h\log m/(m+h)\leq\log2$.
Equations \eqref{eq:critical-finite} and
\eqref{eq:critical-error-bound} then give
\[
 \frac m2
 \leq m\exp\left(-\frac{h\log m}{m+h}\right)
 \leq\Hpol_{m,m+h}(\C)
 \leq m\exp(L_0h).
\]
Thus one may take $c_h=1/2$ and $C_h=\exp(L_0h)$.
\end{proof}

\section*{Acknowledgments}
E.~V. Teixeira gratefully acknowledges support from the Grayce B. Kerr Chair
funds at Oklahoma State University. This research was conducted in part under
the DARPA ExpMath project \emph{``A Human-Centered Framework for
AI-Mathematician Collaboration in Research-Level Mathematics''} (Agreement
No.~HR0011262E029), in which E.~V. Teixeira serves as a co-principal
investigator. 
He thanks DARPA for its support and the members of the project for
their collaboration. The views, opinions, and findings
expressed here are those of the authors and should not be interpreted as
representing the official views or policies of the Department of Defense or
the U.S.\ Government.

\section*{Use of Generative AI}

Generative-AI tools were used for exploratory calculations, consistency
checks, organization of arguments, and drafting and editorial assistance.
All mathematical statements, proofs, and citations were independently
checked by the authors, who take full responsibility for the manuscript.

\section*{Competing interests}
The authors declare no competing interests.

\end{document}